\newtheorem*{theorem*}{Theorem}
\newtheorem*{proposition*}{Proposition}
\newtheorem{theorem}[equation]{Theorem}
\newtheorem{corollary}[equation]{Corollary}
\newtheorem{proposition}[equation]{Proposition}
\newtheorem{lemma}[equation]{Lemma}
\theoremstyle{definition}
\newtheorem{definition}[equation]{Definition}
\newtheorem{remark}[equation]{Remark}
\newtheorem{example}[equation]{Example}
\newtheorem{question}[equation]{Question}
\newtheorem{convention}[equation]{Convention}
\newtheorem{notation}[equation]{Notation}
\numberwithin{equation}{section}
\newcommand{\opn}{\operatorname}
\newcommand{\cat}[1]{\operatorname{\mathsf{#1}}}
\newcommand{\rmitem}[1]{\item[\text{\textup{(#1)}}]}
\newcommand{\mfrak}[1]{\mathfrak{#1}}
\newcommand{\mrm}[1]{\mathrm{#1}}
\newcommand{\Ga}{\Gamma}
\newcommand{\Ja}{\mathrm{Q}}
\newcommand{\La}{\Lambda}
\newcommand{\si}{\sigma}
\renewcommand{\i}{\mfrak{i}}
\renewcommand{\a}{\mfrak{a}}
\renewcommand{\aa}{\bsym{a}}
\newcommand{\K}{\mathbb{K}}
\newcommand{\tup}[1]{\textup{#1}}
\newcommand{\bsym}[1]{\boldsymbol{#1}}
\newcommand{\ot}{\otimes}
\title[Weakly stable torsion classes]{Weakly stable torsion classes}
\author{Rishi Vyas}
\email{vyas.rishi@gmail.com}
\subjclass[2010]{16S90 (Primary); 16E35, 18E30, 18G10 (Secondary)} 
\thanks{This work was completed when the author was supported by: Israel Science Foundation grants  253/13 and 170/12, the Center for Advanced Studies in Mathematics at Ben-Gurion University of the Negev, and the Israel Council for Higher Education.}
\begin{document}

\begin{abstract}
Weakly stable torsion classes were introduced by the author and Yekutieli to provide a torsion theoretic characterisation of the notion of weak proregularity from commutative algebra. In this paper we investigate weakly stable torsion classes, with a focus on aspects related to localisation and completion. We characterise when torsion classes arising from left denominator sets and idempotent ideals are weakly stable. We show that every weakly stable torsion class $\cat{T}$ can be associated with a dg ring $A_{\cat{T}}$; in well behaved situations there is a homological epimorphism $A\to A_{\cat{T}}$. We end by studying torsion and completion with respect to a single regular and normal element.  
\end{abstract}

\maketitle

\section{Introduction} \label{s:01}

Weak proregularity was first defined by Alonso, Jeremias, and Lipman in \cite{AJL} (the phenomenon that it formalises, however, was already observed by Grothendieck in \cite{LC}). It was  further studied by Schenzel in \cite{Sch}, Porta, Shaul and Yekutieli in \cite{PSY1} and \cite{PSY2}, and Positselski \cite{Po}.

 Let $\a$ be an ideal in a commutative ring $A$. There are two functors associated with $\a$: the $\a$-torsion functor $\Gamma_{\a}$ and the $\a$-adic completion functor $\Lambda_{\a}$. The importance of weak proregularity in commutative algebra arises from the fact that it is the natural context in which to study a collection of equivalences and dualities that relate $\Gamma_{\a}$ and $\Lambda_{\a}$. Let $\cat{T}_{\a}$ denote the class of all $A$-modules $M$ such that $\Ga_{\a}(M)=M$. If $A$ is a ring we denote the category of left $A$-modules by $\cat{M}(A)$, and use $\cat{D}(A)$ for the corresponding derived category.  Consider the following theorem.

\begin{theorem*} \cite{PSY1}
Let $\a$ be a weakly proregular ideal in a commutative ring $A$. 
\begin{enumerate}
\rmitem {i}The functor $\mrm{L} \La_{\a}: \cat{D}(A)\to \cat{D}(A)$ is right adjoint to $\mrm{R} \Ga_{\a}: \cat{D}(A)\to \cat{D}(A)$.
\rmitem{ii} The functors $\mrm{R} \Ga_{\a}$ and $\mrm{L} \La_{\a}$ are idempotent. 
\rmitem{iii}Let $\cat{D}(A)_{\a \! \tup{-tor}}$ and $\cat{D}(A)_{\a \! \tup{-com}}$ be the essential images of $\mrm{R}\Ga_{\a}$ and $\mrm{L}\Lambda_{\a}$, respectively. Then, the functor 
\[ \mrm{R} \Ga_{\a} : \cat{D}(A)_{\a \! \tup{-com}} \to \cat{D}(A)_{\a \! \tup{-tor}} \]
is an equivalence of categories, with quasi-inverse $\mrm{L}\Lambda_{\a}$. 
\rmitem{iv} $\cat{D}(A)_{\a \! \tup{-tor}}$ is the full subcategory of objects $M\in \cat{D}(A)$ such that  $\opn{H}^{i}(M)\in \cat{T}_{\a}$ for all $i \in \mathbb{Z}$.
\end{enumerate}
\end{theorem*}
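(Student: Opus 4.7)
The plan is to realise both $\mrm{R}\Ga_\a$ and $\mrm{L}\La_\a$ as derived functors against a single bounded complex of flat $A$-modules. Fix a finite generating set $\aa = (a_1, \ldots, a_n)$ of $\a$ and form the stable Koszul complex $T(\aa)$, obtained as the $n$-fold tensor product of the two-term complexes $A \to A[a_i^{-1}]$. The crucial input from weak proregularity is that for every $M \in \cat{D}(A)$ there are functorial isomorphisms
\[ \mrm{R}\Ga_\a(M) \simeq T(\aa) \ot_A M \quad \text{and} \quad \mrm{L}\La_\a(M) \simeq \mrm{R}\opn{Hom}_A(T(\aa), M). \]
The first identification is the substantive one: weak proregularity is precisely what is needed to ensure that the natural transformation from tensoring with $T(\aa)$ to the right derived functor of $\Ga_\a$ is a quasi-isomorphism on arbitrary (unbounded) complexes. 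The second then follows by a formal adjoint calculation using the K-flatness and boundedness of $T(\aa)$.

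Granted these descriptions, parts (i), (ii), (iii) are largely formal. The adjunction in (i) is exactly tensor--Hom adjunction applied to $T(\aa)$. For (ii), idempotence of $\mrm{R}\Ga_\a$ reduces to the quasi-isomorphism $T(\aa) \ot_A T(\aa) \simeq T(\aa)$, which follows directly from the fact that the complex $(A \to A[a^{-1}])$ becomes contractible after inverting $a$, together with the Koszul structure of $T(\aa)$. Idempotence of $\mrm{L}\La_\a$ is then transported across the adjunction from (i). Part (iii) is a general categorical principle: any pair of adjoint idempotent endofunctors induces mutually inverse equivalences between their essential images, via the unit and counit.

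Part (iv) is the most delicate step. The forward direction---if $M \simeq \mrm{R}\Ga_\a N$ then $\opn{H}^i(M) \in \cat{T}_\a$---follows from a convergent hypercohomology spectral sequence whose $E_2$ page consists of classical derived torsion functors $\mrm{R}^i \Ga_\a$ applied to $\opn{H}^j(N)$; since each such functor lands in $\cat{T}_\a$, so does the abutment. The reverse direction requires showing that the counit $\mrm{R}\Ga_\a M \to M$ is a quasi-isomorphism whenever $\opn{H}^i(M) \in \cat{T}_\a$ for all $i$. A way-out/truncation argument reduces to the case of a single module $N \in \cat{T}_\a$, where the content is the vanishing $\mrm{R}^i \Ga_\a N = 0$ for $i > 0$. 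This is proved by filtering $N$ by the submodules $\{x \in N : \a^k x = 0\}$ and again invoking weak proregularity to control the resulting pro-system of Koszul cohomologies in the limit.

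The principal obstacle, running through all four parts, is the correct deployment of weak proregularity itself. It is formulated as a pro-vanishing condition on Koszul cohomology and is quite indirect. A significant technical effort---occupying a large portion of the PSY paper---is required to translate this condition into the statement that $T(\aa) \ot_A (-)$ genuinely computes $\mrm{R}\Ga_\a$ on the \emph{unbounded} derived category, which is what ultimately powers both the adjunction in (i) and the reverse direction of (iv).
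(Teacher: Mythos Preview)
The paper does not prove this theorem; it is quoted in the introduction purely as background, with attribution to \cite{PSY1}, and the subsequent paragraph explicitly says the statement is ``excerpted from \cite{PSY1}.'' There is therefore no proof in the present paper to compare your proposal against.

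That said, your sketch is essentially the argument of \cite{PSY1}: realise both $\mrm{R}\Ga_{\a}$ and $\mrm{L}\La_{\a}$ via the infinite Koszul (equivalently, telescope) complex $T(\aa)$, and then read off the adjunction, idempotence, and equivalence from tensor--Hom adjunction together with the quasi-isomorphism $T(\aa)\ot_{A} T(\aa)\simeq T(\aa)$. One simplification worth noting for the reverse direction of (iv): you do not need a filtration by annihilator submodules or any pro-system argument. If $N\in\cat{T}_{\a}$ is a module, every element of $N$ is killed by some power of each generator $a_i$, so $N[a_i^{-1}]=0$ for all $i$; hence each factor $(A\to A[a_i^{-1}])\ot_A N$ is quasi-isomorphic to $N$, and $T(\aa)\ot_A N\to N$ is already a quasi-isomorphism. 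This immediately gives $\mrm{R}^{i}\Ga_{\a}(N)=0$ for $i>0$, after which your way-out / bounded spectral sequence step handles unbounded complexes with torsion cohomology.
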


The statement given above is excerpted from \cite{PSY1} (where it is called the \emph{MGM equivalence}), but the result, in content, is the denouement of a large body of work, including that of Grothendieck \cite{LC}, Matlis \cite{Ma}, Greenlees and May \cite{GM}, Alonso, Jeremias and Lipman \cite{AJL}, Schenzel \cite{Sch}, and Porta, Shaul, and Yekutieli \cite{PSY1}. Weakly proregular ideals are ubiquitous: every ideal in a commutative noetherian ring is weakly proregular.

Traditionally, weak proregularity of an ideal $\a$ was defined via properties of Koszul complexes associated to explicit finite generating sets of $\a$. Such definitions do not generalise well to noncommutative ring theory. In \cite{VY}, the author and Yekutieli characterised when $\a$ is weakly proregular via a categorical property of the functor $\Ga_{\a}$ and associated torsion class $\cat{T}_{\a}$. This property was formalised in the definition of a \emph{weakly stable} torsion class (Definition \ref{d:02}).

\begin{theorem*} \cite[Theorem 4.12]{VY}
Let $A$ be a commutative ring, let $\aa$ be a finite sequence of elements of $A$, let $\a$ be the ideal generated by $\aa$, and let $\cat{T}_{\a}$ be the associated torsion class in $\cat{M}(A)$. The following two conditions are equivalent\tup{:} 
\begin{enumerate}
\rmitem{i} The sequence $\aa$ is weakly proregular. 
\rmitem{ii} The torsion class $\cat{T}_{\a}$ is weakly stable.
\end{enumerate}
\end{theorem*}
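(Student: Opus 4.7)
The plan is to bridge the two conditions through the telescope (or extended Koszul) complex $\mrm{Tel}(A; \aa)$, a bounded complex of flat $A$-modules built explicitly from $\aa$. A classical reformulation---implicit in the work of Greenlees--May, Alonso--Jeremias--Lipman, Schenzel, and Porta--Shaul--Yekutieli---says that $\aa$ is weakly proregular if and only if the natural map $\mrm{Tel}(A; \aa) \ot_A M \to \mrm{R}\Ga_\a(M)$ is a quasi-isomorphism for every $M \in \cat{M}(A)$; equivalently, $\mrm{Tel}(A; \aa)$ computes $\mrm{R}\Ga_\a$. I would take this as the technical fulcrum of the proof and reduce both implications to statements about this complex.

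For (i) $\Rightarrow$ (ii), assume weak proregularity. Then $\mrm{R}\Ga_\a$ is represented by $\mrm{Tel}(A; \aa) \ot_A (-)$, a functor computed by a bounded complex of flat modules. Such a functor has excellent homological behaviour: it commutes with filtered colimits and products, and descends to a well-behaved endofunctor on $\cat{D}(A)$ whose essential image consists of complexes with cohomology in $\cat{T}_\a$. Weak stability is a package of homological properties of $\Ga_\a$ and its derived functor on injective or K-injective complexes, and each of these properties should follow by direct verification from the flat telescope model together with standard facts about bounded flat complexes.

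For (ii) $\Rightarrow$ (i), the strategy is to reverse the logic. Weak stability is designed precisely so that $\mrm{R}\Ga_\a$ admits a resolution-based computation compatible with the telescope construction: it provides sufficient control on $\Ga_\a$ applied to injective resolutions of $A$ to compare $\mrm{R}\Ga_\a(A)$ with $\mrm{Tel}(A; \aa)$ term by term. Once the quasi-isomorphism $\mrm{Tel}(A; \aa) \simeq \mrm{R}\Ga_\a(A)$ is established, the pro-zero condition on the inverse system $\{\DH^{-j}(\mrm{K}(A; \aa^k))\}_k$ of Koszul cohomologies---the traditional definition of weak proregularity---follows via the standard comparison between Koszul cohomology and the cohomology of the telescope complex.

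The principal obstacle is this converse direction: extracting a concrete Mittag--Leffler statement about explicit Koszul cohomology systems from the abstract categorical hypothesis of weak stability. The delicate step is identifying, using weak stability, an injective or K-flat resolution of $A$ whose image under $\Ga_\a$ simultaneously computes $\mrm{R}\Ga_\a(A)$ and matches (up to quasi-isomorphism) the telescope complex; this compatibility is exactly what fails in the absence of weak stability, and is precisely what the definition of a weakly stable torsion class is engineered to enforce.
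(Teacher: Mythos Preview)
This theorem is not proved in the present paper. It appears in the introduction purely as a citation of \cite[Theorem 4.12]{VY}; the paper states it as background motivation for studying weakly stable torsion classes and moves on without giving any argument. There is therefore no proof in this paper to compare your proposal against.

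That said, a brief comment on your sketch. Your direction (i) $\Rightarrow$ (ii) is essentially sound: once $\mrm{R}\Ga_{\a}$ is modelled by tensoring with the bounded flat telescope complex, idempotence of $\mrm{R}\Ga_{\a}$ on $\cat{D}(A)$ follows from the quasi-isomorphism $\mrm{Tel}(A;\aa)\ot_A^{\mrm{L}}\mrm{Tel}(A;\aa)\simeq \mrm{Tel}(A;\aa)$, and weak stability then drops out (compare Proposition~\ref{p:02} of the present paper). Your direction (ii) $\Rightarrow$ (i), however, is not yet a proof. You assert that weak stability lets one ``identify an injective or K-flat resolution of $A$ whose image under $\Ga_{\a}$ simultaneously computes $\mrm{R}\Ga_{\a}(A)$ and matches the telescope complex,'' but you give no mechanism for producing such a resolution or for extracting the pro-zero condition on the Koszul cohomology inverse system from the abstract hypothesis. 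The genuine work in \cite{VY} for this implication is to show that weak stability forces the higher derived inverse-limit terms $\varprojlim^1 \DH^{-j}(\mrm{K}(A;\aa^k))$ to vanish, and this requires a concrete argument linking the categorical condition to the explicit inverse system---not merely the observation that weak stability is ``engineered'' for compatibility. As written, your converse direction names the target but does not supply the bridge.
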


Let $A$ be a ring and $\cat{T}$ a torsion class in $\cat{M}(A)$, with associated torsion functor $\Ga_{\cat{T}}$. There are various conditions we might want $\cat{T}$ to satisfy to ensure good behaviour: these include \emph{finite dimensionality} (Definition \ref{d:05}) and \emph{quasi-compactness} (Definition \ref{d:08}). This leads to a  noncommutative version of the MGM equivalence.

\begin{theorem*} \cite[Proposition 3.7, Theorem 8.3]{VY}
Let $A$ be a ring, flat over a commutative base ring $\K$. Let $\cat{T}$ be a quasi-compact, finite dimensional, weakly stable torsion class in $\cat{M}(A)$. 
\begin{enumerate}
\rmitem{i} The functor $\mrm{R}\Ga_{\cat{T}}:\cat{D}(A)\to \cat{D}(A)$ admits a right adjoint $\mathrm{G}_{\cat{T}}:\cat{D}(A)\to \cat{D}(A)$.
\rmitem{ii} The functors $\mrm{R}\Ga_{\cat{T}}$ and $\mathrm{G}_{\cat{T}}$ are idempotent.
\rmitem{iii} Let $\cat{D}(A)_{\cat{T} \! \tup{-tor}}$ and $\cat{D}(A)_{\cat{T} \! \tup{-com}}$ be the essential images of $\mrm{R}\Ga_{\cat{T}}$ and $\mathrm{G}_{\cat{T}}$, respectively. Then, the functor 
\[ \mrm{R} \Ga_{\cat{T}} : \cat{D}(A)_{\cat{T} \! \tup{-com}} \to \cat{D}(A)_{\cat{T} \! \tup{-tor}} \]
is an equivalence of categories, with quasi-inverse $G_{\cat{T}}$. 
\rmitem{iv} $\cat{D}(A)_{\cat{T} \! \tup{-tor}}$ is the full subcategory of objects $M\in \cat{D}(A)$ such that  $\opn{H}^{i}(M)\in \cat{T}$ for all $i \in \mathbb{Z}$.
\end{enumerate}
\end{theorem*}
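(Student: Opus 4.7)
The plan is to follow the blueprint of the commutative MGM equivalence in \cite{PSY1}, replacing every appeal to Koszul-style weak proregularity with its torsion-theoretic incarnation: weak stability, finite dimensionality and quasi-compactness of $\cat{T}$, with $\K$-flatness of $A$ supplying the flatness that was automatic in the commutative case. For part \tup{(i)}, I first lift $\Ga_{\cat{T}}$ to a triangulated endofunctor $\mrm{R}\Ga_{\cat{T}}$ on all of $\cat{D}(A)$: finite dimensionality bounds the cohomological dimension of $\Ga_{\cat{T}}$, so the right derived functor is computable via K-injective resolutions on unbounded complexes. Quasi-compactness, which says that $\Ga_{\cat{T}}$ commutes with filtered colimits, together with $\K$-flatness of $A$, implies that $\mrm{R}\Ga_{\cat{T}}$ preserves arbitrary coproducts; since $\cat{D}(A)$ is compactly generated, Brown representability then produces the right adjoint $\mrm{G}_{\cat{T}}$ abstractly.

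For part \tup{(ii)}, the idempotence of $\mrm{R}\Ga_{\cat{T}}$ is the substantive point, and this is the place where weak stability really does work. The goal is to show that every cohomology $\mrm{H}^{i}(\mrm{R}\Ga_{\cat{T}}(M))$ lies in $\cat{T}$; equivalently, that the right derived functors $\mrm{R}^{i}\Ga_{\cat{T}}$ take values in $\cat{T}$. Weak stability is designed precisely so that applying $\Ga_{\cat{T}}$ to an appropriate resolution yields a complex that both computes $\mrm{R}\Ga_{\cat{T}}$ and has torsion cohomology. Once this is in place, the canonical map $\mrm{R}\Ga_{\cat{T}}\circ \mrm{R}\Ga_{\cat{T}}\to \mrm{R}\Ga_{\cat{T}}$ is seen to be an isomorphism, since $\Ga_{\cat{T}}$ is the identity on objects of $\cat{T}$. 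Idempotence of $\mrm{G}_{\cat{T}}$ then follows formally by the adjunction. Part \tup{(iii)} is now abstract nonsense: an adjunction between idempotent triangulated endofunctors yields inverse equivalences on the essential images, with the unit and counit serving as witnesses.

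For part \tup{(iv)}, one inclusion is immediate from \tup{(ii)} since $\mrm{H}^{i}\circ \mrm{R}\Ga_{\cat{T}}=\mrm{R}^{i}\Ga_{\cat{T}}$. Conversely, given $M\in \cat{D}(A)$ with $\mrm{H}^{i}(M)\in \cat{T}$ for all $i$, the strategy is to build $M$ from its cohomology via truncation triangles and show by induction on degree that the comparison map $\mrm{R}\Ga_{\cat{T}}(M)\to M$ is an isomorphism, using that $\mrm{R}\Ga_{\cat{T}}$ restricts to the identity on $\cat{T}$ in degree zero; finite dimensionality guarantees convergence when $M$ is unbounded. The main obstacle will be step \tup{(ii)}: in the commutative case $\mrm{R}^{i}\Ga_{\a}(M)\in \cat{T}_{\a}$ is built directly into the Koszul calculation, whereas here it must be extracted purely from the categorical content of weak stability. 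The natural route is to filter injective resolutions by $\cat{T}$-subobjects, exploit quasi-compactness to handle potentially large injectives via reduction to filtered colimits of finitely presented situations, and argue that weak stability forces each successive layer of the filtration to remain in $\cat{T}$ after $\Ga_{\cat{T}}$ is applied.
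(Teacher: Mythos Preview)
Your overall architecture is sound, but you have misdiagnosed both where the real work lies and where the hypotheses are consumed. For part \tup{(i)}, the route taken in \cite{VY} (and sketched in \S\ref{s:06} of the present paper) is not Brown representability: instead one proves the explicit tensor formula $\mrm{R}\Ga_{\cat{T}}\cong \mrm{R}\Ga_{\cat{T}}(A)\otimes_{A}^{\mrm{L}}(-)$, whence $\mathrm{G}_{\cat{T}}=\mrm{RHom}_{A}(\mrm{R}\Ga_{\cat{T}}(A),-)$ is the right adjoint by tensor--Hom adjunction. This is precisely the step that needs $A$ to be flat over $\K$, because one must regard $\mrm{R}\Ga_{\cat{T}}(A)$ as an object of $\cat{D}(A^{\mrm{en}})$ and compare $\mrm{R}\Ga_{\cat{T}}$ computed over $A$ and over $A^{\mrm{en}}$ (cf.\ \cite[Lemma 7.9, Theorem 7.12]{VY}). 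Your Brown representability argument is a legitimate alternative and does yield an abstract right adjoint, but it does not need $\K$-flatness at all---coproduct preservation follows from quasi-compactness plus finite dimensionality alone---and, more importantly, it gives no formula for $\mathrm{G}_{\cat{T}}$, which is the whole point of the later comparison with $\mrm{L}\Lambda$.

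For part \tup{(ii)} you have inverted the difficulty. That $\mrm{R}^{i}\Ga_{\cat{T}}(M)\in\cat{T}$ for every $M$ is automatic and uses none of the hypotheses: if $I$ is an injective resolution then each $\Ga_{\cat{T}}(I^{j})$ lies in $\cat{T}$, hence so does every cohomology of $\Ga_{\cat{T}}(I)$. The actual content of weak stability is that each $\Ga_{\cat{T}}(I^{j})$ is right $\Ga_{\cat{T}}$-acyclic, so $\Ga_{\cat{T}}(I)$ computes $\mrm{R}\Ga_{\cat{T}}$ again when you apply $\mrm{R}\Ga_{\cat{T}}$ to it; idempotence is immediate (this is exactly Proposition~\ref{p:02}). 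No filtration of injectives by $\cat{T}$-subobjects or reduction to finitely presented situations is required, and quasi-compactness plays no role here. Your plans for \tup{(iii)} and \tup{(iv)} are fine.
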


In this paper, we further explore properties of weakly stable torsion classes. Section \ref{s:02} is a short overview of some basic definitions and elementary properties of functors that will be required. 

In Section \ref{s:07}, we give a quick primer on torsion classes in module categories, introducing relevant notation and recalling definitions that will be required later. In particular, we recall the definitions of weakly stable, finite dimensional, and quasi-compact torsion classes from \cite{VY}. We discuss certain aspects of weakly stable torsion classes and their associated torsion functors.

Section \ref{s:03} is concerned with constructing a stock of examples. Every left denominator set $S$ in a ring $A$ can be associated with a torsion class $\cat{T}_{S}$ and torsion functor $\Ga_{S}$. $\cat{T}_{S}$ is the class of all modules which are element-wise annihilated by some element of $S$; if $M$ is a module, then $\Ga_{S}(M) = \{m\in M\ \vert \ sm=0 \ \text{for some}\ s\in S\}.$ The following theorem is excerpted from Theorem \ref{t:02}.

\begin{theorem*}
Let $S$ be a left denominator set in a ring $A$. Then, the following are equivalent:
\begin{enumerate}
\rmitem{i} The torsion class $\cat{T}_{S}$ is weakly stable.
\rmitem{ii} For any injective module $E$, the canonical localisation map $E\to E_{S}$ is surjective.
\rmitem{iii} $\mrm{R}\Gamma_{S} \cong (A\to A_{S})\ot_{A}^{\mrm{L}}(-)$, where $A\to A_{S}$ is the complex with $A$ in degree $0$, $A_{S}$ in degree $1$, and the canonical localisation map as differential.
\rmitem{iv} $\Gamma_{S}$ has right cohomological dimension less than or equal to one.
\end{enumerate}
\end{theorem*}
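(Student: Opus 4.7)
The plan is to establish the cycle (ii) $\Rightarrow$ (iii) $\Rightarrow$ (iv) $\Rightarrow$ (ii) using standard homological algebra on injective resolutions, and then bring in the equivalence with (i) via the characterisation of weakly stable torsion classes recalled in Section \ref{s:07}.

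First I set up notation. Since $S$ is a left denominator set, localisation is exact with $M_S \cong A_S \ot_A M$, $A_S$ is flat as a right $A$-module, and the kernel of $M \to M_S$ is $\Ga_S(M)$. Writing $K := (A \to A_S)$ in degrees $0, 1$, flatness gives $K \ot_A^{\mrm{L}} M \simeq K \ot_A M = (M \to M_S)$, whose cohomologies in degrees $0$ and $1$ are $\Ga_S(M)$ and $M_S/M$ respectively. For (ii) $\Rightarrow$ (iii), choose an injective resolution $M \to I^\bullet$; by (ii) applied termwise, $0 \to \Ga_S(I^\bullet) \to I^\bullet \to I^\bullet_S \to 0$ is a short exact sequence of complexes, and since $I^\bullet_S$ resolves $M_S$ (by exactness of localisation), the resulting distinguished triangle identifies $\mrm{R}\Ga_S(M) \simeq K \ot_A^{\mrm{L}} M$ in $\cat{D}(A)$. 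Conversely (iii) $\Rightarrow$ (ii) by specialising to $M = E$ injective: $\mrm{R}\Ga_S(E)$ lives in degree zero, forcing $\opn{coker}(E \to E_S) = 0$. That (iii) $\Rightarrow$ (iv) is immediate from the concentration of $K$ in two degrees.

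For (iv) $\Rightarrow$ (ii), fix $E$ injective and set $F := E/\Ga_S(E)$ and $N := \opn{coker}(E \to E_S)$. A direct check yields $\Ga_S(E_S) = 0$ (since $S$ acts invertibly on $E_S$) and $\Ga_S(N) = N$ (since $s \cdot s^{-1}e$ lies in the image of $E$, every element of $N$ is killed by some $s \in S$). Apply $\mrm{R}\Ga_S$ to $0 \to \Ga_S(E) \to E \to F \to 0$: using $\mrm{R}^i\Ga_S(E) = 0$ for $i \geq 1$ ($E$ injective) and $\mrm{R}^2\Ga_S(\Ga_S(E)) = 0$ (from hypothesis (iv)), the long exact sequence forces $\mrm{R}^1\Ga_S(F) = 0$. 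Applying $\mrm{R}\Ga_S$ to $0 \to F \to E_S \to N \to 0$ then exhibits the exact piece $\Ga_S(E_S) \to \Ga_S(N) \to \mrm{R}^1\Ga_S(F)$, i.e.\ $0 \to N \to 0$, so $N = 0$.

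To tie in (i), I invoke the characterisation of weakly stable torsion classes from Section \ref{s:07}. I expect (iii) $\Rightarrow$ (i) to be essentially formal: once $\mrm{R}\Ga_S$ is realised as derived tensoring with the bounded complex $K$ of flat right $A$-modules, verifying the axioms of weak stability should reduce to unwinding definitions. The converse direction, say (i) $\Rightarrow$ (iv), should follow from the structural consequences of weak stability ensuring the required cohomological bound. The main technical obstacle I anticipate is the nested long-exact-sequence bookkeeping in (iv) $\Rightarrow$ (ii); the remaining implications are either direct from definitions or reasonably standard.
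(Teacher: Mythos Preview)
Your cycle (ii)$\Leftrightarrow$(iii)$\Leftrightarrow$(iv) is correct, and (iv)$\Rightarrow$(i) is precisely Lemma~\ref{l:02}; you should cite that rather than gesture at ``(iii)$\Rightarrow$(i) should be essentially formal.'' The genuine gap is the direction (i)$\Rightarrow$(rest). Your sentence ``(i)$\Rightarrow$(iv) should follow from the structural consequences of weak stability ensuring the required cohomological bound'' is not an argument: weak stability is a statement about $\Ga_S(E)$ for injective $E$, and there is no a priori reason this bounds the cohomological dimension of $\Ga_S$ on all modules.

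The fix, however, is already in your hands. In your proof of (iv)$\Rightarrow$(ii), the \emph{only} use of (iv) is to obtain $\mrm{R}^2\Ga_S(\Ga_S(E))=0$ for injective $E$. But that vanishing is exactly what weak stability asserts (it says $\Ga_S(E)$ is right $\Ga_S$-acyclic). So your two long exact sequences prove (i)$\Rightarrow$(ii) verbatim, and the loop closes via (ii)$\Rightarrow$(iii)$\Rightarrow$(iv). This is genuinely different from, and shorter than, the paper's route: the paper proves (i)$\Rightarrow$(ii) (in fact a stronger version for all $\Ga_S$-acyclic modules) by building an explicit $\Ga_S$-acyclic resolution $M\to M_S\to J^3\to\cdots$ of $\Ga_S(M)$ and reading off $\mrm{R}^3\Ga_S(\Ga_S(M))\neq 0$ for a contradiction. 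Your argument avoids this construction entirely.

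One minor point on (ii)$\Rightarrow$(iii): ``the resulting distinguished triangle identifies $\mrm{R}\Ga_S(M)\simeq K\ot_A^{\mrm{L}}M$'' is not quite rigorous, since cones are not functorial and (iii) asserts a natural isomorphism of functors. The clean fix (which the paper also uses) is to write down the explicit map of complexes $\xi_I:\Ga_S(I)\to K\ot_A I$, $x\mapsto (x,0)$, observe it is natural in $I$, and check under (ii) that it is a quasi-isomorphism for each injective module, hence for bounded-below complexes of injectives since $K$ is a bounded complex of flat right $A$-modules. The paper isolates the construction and uniqueness of this natural transformation as Lemma~\ref{l:03}.
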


From the above theorem, it follows that when $\Ga_{S}$ is weakly stable, it is also quasi-compact and finite dimensional. The same is true for ideals in a commutative ring: if $\a$ is a weakly proregular ideal in a commutative ring $A$, $\Ga_{\a}$ is automatically quasi-compact and finite dimensional (\cite[Corollary 4.21]{VY}). In the remainder of Section \ref{s:03}, we construct a sequence of examples to show that such nice behaviour is not the norm. 

There is a process by which every torsion class $\cat{T}$ in $\cat{M}(A)$ can be associated with a functor $\Ja_{\cat{T}}:\cat{M}(A)\to \cat{M}(A)$ and a full subcategory $\cat{M}(A)_{\cat{T}}$ of $\cat{M}(A)$. The category $\cat{M}(A)_{\cat{T}}$ is a localisation of $\cat{M}(A)$, with corresponding localisation functor $\Ja_{\cat{T}}^{2}$ (in Corollary \ref{c:03}, we show that $\Ja_{\cat{T}} \cong \Ja_{\cat{T}}^{2}$ for weakly stable $\cat{T}$). In Section \ref{s:04} we study the derived functor $\mrm{R}\Ja_{\cat{T}}$. The following result appears as Theorem \ref{t:03}.

\begin{theorem*}
Let $A$ be a ring, flat over a commutative base $\K$, and let $\cat{T}$ be a weakly stable torsion class in $\cat{M}(A)$. Then, there exists a dg $\K$-ring $A_{\cat{T}}$, a morphism of dg $\K$-rings $\phi_{\cat{T}}: A\to A_{\cat{T}}$, and a morphism $\psi_{\cat{T}}: A_{\cat{T}}\to \mathrm{R}\Ga_{\cat{T}}(A)[1]$ in $\cat{D}(A\otimes_{\K} A^{\mrm{op}})$ such that the triangle \[\mathrm{R}\Ga_{\cat{T}}(A)\xrightarrow{\si_{A}^{\mrm{R}}} A\xrightarrow{\phi_{\cat{T}}} A_{\cat{T}}\xrightarrow{\psi_{\cat{T}}} \mathrm{R}\Ga_{\cat{T}}(A)[1]\] is distinguished in $\cat{D}(A\otimes_{\K} A^{\mrm{op}})$.
\end{theorem*}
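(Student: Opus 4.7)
The plan is to realise $A_{\cat{T}}$ concretely as the quotient of a suitable dg $\K$-algebra resolution of $A$ by its torsion subcomplex.

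First I would produce a resolution $A \xrightarrow{\simeq} I$ of $A$ in $\cat{M}(A \otimes_{\K} A^{\mrm{op}})$ which is simultaneously (i) a dg $\K$-algebra with the resolution map a dg ring morphism, and (ii) sufficiently fibrant as a complex of $A$-bimodules that $\Ga_{\cat{T}}(I)$ represents $\mrm{R}\Ga_{\cat{T}}(A)$ in $\cat{D}(A \otimes_{\K} A^{\mrm{op}})$. Since $A$ is $\K$-flat, such an $I$ can be produced by combining a semifree dg algebra resolution of $A$ with a further K-injective bimodule replacement, and arranging compatibility of the multiplication, using the machinery developed earlier in the paper around $\mrm{R}\Ja_{\cat{T}}$ together with standard model-categorical arguments for dg algebras.

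Next I would observe that $\Ga_{\cat{T}}(I)$ is a two-sided dg ideal of $I$. The key general fact is that for any $A$-bimodule $M$, the torsion submodule $\Ga_{\cat{T}}(M)\subseteq M$ is a sub-bimodule: if $m\in \Ga_{\cat{T}}(M)$ and $a\in A$, then right multiplication by $a$ is a left-$A$-linear endomorphism of $M$ which sends the torsion submodule $Am$ onto $Ama$, hence $Ama$ is torsion and $ma\in \Ga_{\cat{T}}(M)$. In the dg setting this yields $\Ga_{\cat{T}}(I)\cdot I\subseteq \Ga_{\cat{T}}(I)$ and $I\cdot \Ga_{\cat{T}}(I)\subseteq \Ga_{\cat{T}}(I)$. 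Consequently the quotient
\[ A_{\cat{T}} := I/\Ga_{\cat{T}}(I) \]
inherits a dg $\K$-algebra structure, and $\phi_{\cat{T}}\colon A \to A_{\cat{T}}$ is defined as the composite $A\to I \surj A_{\cat{T}}$, which is by construction a dg ring morphism. The short exact sequence of bimodule complexes
\[ 0\to \Ga_{\cat{T}}(I)\to I\to A_{\cat{T}}\to 0 \]
produces a distinguished triangle in $\cat{D}(A\otimes_{\K}A^{\mrm{op}})$ with connecting morphism $\psi_{\cat{T}}$, and by the choice of $I$ the first two terms identify canonically with $\mrm{R}\Ga_{\cat{T}}(A)\xrightarrow{\si_{A}^{\mrm{R}}} A$, giving the required triangle.

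The main obstacle is the construction of $I$: producing, for a possibly noncommutative $\K$-flat algebra $A$, a single resolution that carries both a compatible dg $\K$-algebra structure and enough fibrancy as an $A$-bimodule complex to compute $\mrm{R}\Ga_{\cat{T}}(A)$. Each requirement in isolation is standard, but reconciling them is a delicate model-categorical matter where the $\K$-flatness is essential, and it carries the bulk of the technical weight of the theorem. A secondary verification, which follows from weak stability of $\cat{T}$, is that $\Ga_{\cat{T}}(I)$ really represents $\mrm{R}\Ga_{\cat{T}}(A)$ as an object of $\cat{D}(A\otimes_{\K}A^{\mrm{op}})$ and not merely of $\cat{D}(A)$.
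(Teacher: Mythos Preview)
Your approach has a genuine gap at its core, and it differs substantially from the paper's route.

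The central difficulty is your Step~1: producing a quasi-isomorphism of dg $\K$-rings $A\to I$ with $I$ simultaneously K-injective (or at least $\Ga_{\cat{T}}$-acyclic) as a complex of $A$-bimodules. You correctly flag this as ``the main obstacle,'' but the appeal to ``standard model-categorical arguments for dg algebras'' does not resolve it. The familiar model structures on dg $\K$-algebras are projective-type (semifree cofibrant replacements); there is no off-the-shelf injective model structure on dg algebras whose fibrant objects are K-injective as bimodule complexes. Splicing a semifree dg algebra resolution with a subsequent K-injective bimodule replacement destroys the multiplicative structure, and there is no mechanism in the paper (or in the standard literature) that repairs it. This is not a technical wrinkle---it is precisely the problem the theorem is designed to solve.

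There is a second gap in your ideal argument. Your sub-bimodule computation shows that right multiplication by elements of $A$ preserves $\Ga_{\cat{T}}(I)$, and this extends to right multiplication by elements of $I$ (since $x\mapsto xi$ is left $A$-linear). But left multiplication by $i\in I$ is \emph{not} left $A$-linear in general, so the inclusion $I\cdot\Ga_{\cat{T}}(I)\subseteq\Ga_{\cat{T}}(I)$ does not follow from what you wrote; $\Ga_{\cat{T}}(I)$ is only a right ideal of $I$ by your argument. Finally, note that if your construction did work, weak stability would play no role, whereas it is a hypothesis of the theorem; this is a warning sign.

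The paper avoids these obstacles entirely. It does not try to make an injective resolution of $A$ multiplicative. Instead, it takes a K-injective bimodule resolution $I$ of $\mrm{R}\Ja_{\cat{T}}(A)$ (the localisation, not $A$) and sets $A_{\cat{T}}:=\mrm{Hom}_A(I,I)^{\mrm{op}}$, which is automatically a dg $\K$-ring with a canonical map from $A$ given by right multiplication. Weak stability enters through the key lemma $\mrm{RHom}_A(\mrm{R}\Ja_{\cat{T}}(A),\mrm{R}\Ja_{\cat{T}}(A))\cong\mrm{R}\Ja_{\cat{T}}(A)$ in $\cat{D}(A^{\mrm{en}})$, which identifies $A_{\cat{T}}$ with the third vertex of the triangle $\mrm{R}\Ga_{\cat{T}}(A)\to A\to\mrm{R}\Ja_{\cat{T}}(A)\to$. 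The endomorphism-algebra trick is what supplies the multiplicative structure without any delicate compatibility demands on the resolution.
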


When $\cat{T}$ is quasi-compact, finite dimensional, and weakly stable, the map $\phi_{\cat{T}}$ is a homological epimorphism (Corollary \ref{c:01}), $\mrm{R}\Ja_{\cat{T}} \cong A_{\cat{T}}\ot_{A}^{\mrm{L}} (-)$ (Theorem \ref{p:11}), and the corresponding fully faithful functor $\opn{rest}_{\phi_{\cat{T}}}:\cat{D}(A_{\cat{T}})\to \cat{D}(A)$ has as its essential image all complexes $M\in \cat{D}(A)$ such that $\mrm{R}\Ga_{\cat{T}}(M)=0$ (Corollary \ref{c:02}).

If $\a$ is a weakly proregular ideal in a commutative ring $A$, the right adjoint of $\mrm{R}\Ga_{\a}$ is $\mrm{L}\Lambda_{\a}$. This leads to the following question: for which weakly stable torsion classes $\cat{T}$ does the functor $\mrm{R}\Ga_{\cat{T}}$ admit an adjoint of the form $\mrm{L}\Lambda_{\cat{T}}$, where $\Lambda_{\cat{T}}$ is some functor from $\cat{M}(A)$ to itself? In Section \ref{s:06}, we first provide an example to show that this does not always happen. This is then followed by studying a positive case, that of a single normal and regular element. 

If $s$ is a normal and regular element in a ring $A$, $S=\{s^{n}\}_{n\in \mathbb{N}}$ is a left denominator set. Denote the corresponding torsion functor by $\Ga_{s}$. Let $\Lambda_{s}$ denote the $(s)$-adic completion functor. The following result is a paraphrasing of  Theorem \ref{t:01}. It appears explicitly as Corollary \ref{c:04}.

\begin{theorem*}
Let $s$ be a normal and regular element in a ring $A$. Then, the functor $\mrm{L}\Lambda_{s}:\cat{D}(A)\to \cat{D}(A)$ is right adjoint to $\mrm{R}\Ga_{s}:\cat{D}(A)\to \cat{D}(A)$.
\end{theorem*}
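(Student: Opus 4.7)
The plan is to invoke the characterisation of weakly stable torsion classes from denominator sets (Theorem~\ref{t:02}) to reduce the problem to identifying the right adjoint of a tensor product with a particular two-term complex, and then match this right adjoint with $\mrm{L}\Lambda_s$ via a comparison of distinguished triangles. Because $s$ is normal and regular, $S=\{s^n\}_{n\in\N}$ is a two-sided Ore set, and the Ore localisation $A_s$ is flat on both sides. Consequently $K:=(A\to A_s)$, with $A$ in degree $0$ and $A_s$ in degree $1$, is a bounded complex of flat $A$-bimodules. Flatness of $A_s$ implies $\Gamma_s$ has right cohomological dimension at most one, so by Theorem~\ref{t:02} the torsion class $\cat{T}_s$ is weakly stable and
\[ \mrm{R}\Gamma_s\cong K\otimes_A^{\mrm{L}}(-). \]
The standard tensor-Hom adjunction then exhibits $\mrm{R}\Hom_A(K,-)$ as the right adjoint, so the theorem reduces to constructing a natural isomorphism $\mrm{R}\Hom_A(K,-)\cong\mrm{L}\Lambda_s(-)$.

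To establish this, I would compare two distinguished triangles. From $K\simeq\opn{Cone}(A\to A_s)[-1]$ one gets the triangle $K\to A\to A_s\to K[1]$ in $\cat{D}(A\otimes_{\K}A^{\mrm{op}})$; applying $\mrm{R}\Hom_A(-,N)$ produces
\[ \mrm{R}\Hom_A(A_s,N)\to N\to\mrm{R}\Hom_A(K,N)\to\mrm{R}\Hom_A(A_s,N)[1]. \]
For the second triangle, fix a K-flat resolution $P\to N$. Because $s$ is regular (so $\cdot s:P\to P$ is injective for flat $P$) and normal (so $P/s^nP$ is an $A$-module), the short exact sequences $0\to P\xrightarrow{s^n}P\to P/s^nP\to 0$ assemble into a short exact sequence of towers of $A$-modules, with transitions $\cdot s$ on the left, identity in the middle, and the (surjective) natural quotient maps on the right. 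Applying $\mrm{R}\varprojlim$ yields
\[ \mrm{R}\varprojlim_n(P,\cdot s)\to P\to\Lambda_s(P)\to\mrm{R}\varprojlim_n(P,\cdot s)[1], \]
whose third term computes $\mrm{L}\Lambda_s(N)$ by Mittag-Leffler. The presentation $A_s=\varinjlim_n A$ (with transitions given by right multiplication by $s$, which is left $A$-linear) yields a two-term free resolution $0\to\bigoplus_n A\xrightarrow{1-(\cdot s)}\bigoplus_n A\to A_s\to 0$, through which the leftmost term of the second triangle is identified with $\mrm{R}\Hom_A(A_s,N)$.

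The main obstacle---and essentially the only substantive step beyond invoking Theorem~\ref{t:02}---is to verify that the second maps in the two triangles above agree under the identification of their first terms. The map $N\to\mrm{R}\Hom_A(K,N)$ is induced by the canonical morphism $K\to A$; the map $N\to\mrm{L}\Lambda_s(N)$ is the derived $s$-adic completion map. Both can be traced at the level of the K-flat resolution $P$ to the same universal arrow $P\to\varprojlim_n P/s^nP$, via the explicit resolution of $A_s$ used above. Once this compatibility of natural transformations is in place, the axiom on morphisms of distinguished triangles delivers a natural isomorphism $\mrm{R}\Hom_A(K,-)\cong\mrm{L}\Lambda_s(-)$ in $\cat{D}(A)$, completing the proof of the adjunction.
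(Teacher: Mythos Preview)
Your overall strategy coincides with the paper's: reduce via Theorem~\ref{t:02} and the tensor--Hom adjunction to showing $\mrm{RHom}_A(\opn{K}(A;s),-)\cong\mrm{L}\Lambda_s$, and establish this using a telescope-type free resolution of $A_s$. There are, however, two genuine gaps in the noncommutative setting.

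First, the short exact sequences $0\to P\xrightarrow{s^n}P\to P/s^nP\to 0$ are \emph{not} sequences of left $A$-modules: the map $p\mapsto s^np$ is left $A$-linear only when $s$ is central, since for merely normal $s$ one has $s^n(ap)=(s^na)p\neq(as^n)p=a(s^np)$ in general. The same applies to the transition maps on your left-hand tower. The paper addresses this by introducing the automorphism $\phi$ of $A$ determined by $as=s\phi(a)$ and working with twisted modules ${}^{\phi^i}M$ (Definition~\ref{d:11}); its telescope complex $\opn{Tel}(A;s)$ is built from $\bigoplus_i As^{-i}$ with a twisted bimodule structure, so that $\mrm{Hom}_A(\opn{Tel}(A;s),M)$ is a complex of $A$-modules involving $\prod_i{}^{\phi^i}M$. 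Without these twists your $\mrm{R}\varprojlim$ computation lives only in $\cat{D}(\K)$, and the isomorphism you produce is not visibly $A$-linear.

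Second, the axiom on morphisms of distinguished triangles yields, for each $N$, \emph{some} isomorphism between the cones, but gives neither uniqueness nor naturality in $N$. The paper circumvents this by writing down an explicit morphism $\omega_M:\mrm{Hom}_A(\opn{Tel}(A;s),M)\to\Lambda_s(M)$ in $\cat{C}(A)$, checking by hand that it is a quasi-isomorphism for flat $M$, and then passing to $\cat{D}(A)$. Your remark about tracing both maps to the universal arrow $P\to\varprojlim_n P/s^nP$ suggests you intend precisely this; if so, carry out that comparison explicitly at the chain level rather than appealing to the triangulated axiom, and in doing so you will be forced to confront the twisting issue above.
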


 \subsection*{Acknowledgements}
 The author would like to thank Amnon Yekutieli for his assistance and many suggestions regarding the material in this paper.

\begin{convention} \label{con:01}
Throughout this paper, $\K$ will be a fixed commutative base ring and all constructions will be over $\K$. By ring, we will mean what is ordinarily referred to as a unital, associative $\K$-algebra. Similarly, by differential graded (i.e.~dg)  ring we mean a unital, associative dg $\K$-algebra. Morphisms of (dg) rings will be $\K$-linear. Pre-additive, additive, and abelian categories will all mean categories enriched over $\cat{M}(\K)$, the category of unital $\K$-modules. Additive functors will be $\K$-linear. An unadorned tensor product $\otimes$ will always mean $\otimes_{\K}$.
\end{convention}

\begin{notation}
If $A$ is a ring, we will use $\cat{M}(A)$ for the abelian category of unital left $A$-modules, $\cat{C}(A)$ for the category of complexes of unital left $A$-modules, $\cat{K}(A)$ for its homotopy category, and $\cat{D}(A)$ for the derived category. All complexes are indexed cohomologically. We use $\cat{D}^{+}(A)$ for the full subcategory of complexes in $\cat{D}(A)$ with bounded below cohomology. If $A$ is a dg ring, $\cat{C}(A)$ will be the category of unital left $A$-modules, with homotopy category $\cat{K}(A)$ and derived category $\cat{D}(A)$.

$A^{\mrm{op}}$ is the opposite (dg) ring of $A$. Right $A$-modules are canonically identified with left modules over $A^{\mrm{op}}$. There is also the enveloping algebra of $A$, $A^{\mrm{en}} := A\ot A^{\mrm{op}}$. $A$-bimodules are identified with left $A^{\mrm{en}}$-modules.

 If $M$ is an object of $\cat{C}(A)$, $\opn{H}^{i}(M)$ is the $i^{\mrm{th}}$ cohomology of $M$, and $\opn{Z}^{i}(M)$ is the module of $i^{\mrm{th}}$ cocycles. If $M$ and $N$ are objects of $\cat{C}(A)$, we use  $\mathrm{Hom}_{A}(M,N)$ for the $\K$-linear complex of homomorphisms between $M$ and $N$; in particular, if $A$ is a ring and $M$ and $N$ are in $\cat{M}(A)$, $\mathrm{Hom}_{A}(M,N)$ is the standard $\K$-module of maps. For morphisms in $\cat{C}(A)$, $\cat{K}(A)$, and $\cat{D}(A)$, we will use $\mrm{Hom}_{\cat{C}(A)}(M,N)$, $\mrm{Hom}_{\cat{K}(A)}(M,N)$ and $\mrm{Hom}_{\cat{D}(A)}(M,N)$ respectively; there are isomorphisms $\opn{H}^{0}(\mrm{Hom}_{A}(M,N))\cong \mrm{Hom}_{\cat{K}(A)}(M,N)$ and $\opn{Z}^{0}(\mrm{Hom}_{A}(M,N))\cong \mrm{Hom}_{\cat{C}(A)}(M,N)$. If $M\in \cat{C}(A^{\mrm{op}})$ and $N\in \cat{C}(A)$, $M\ot_{A} N$ is the tensor product of $M$ and $N$.
 
If $M, N\in \cat{D}(A)$, we use $\mrm{RHom}_{A}(M,N)$ for derived $\mrm{Hom}$; $\opn{H}^{0}(\mrm{RHom}_{A}(M,N)) \cong \mrm{Hom}_{\cat{D}(A)}(M,N)$. If $M\in \cat{D}(A^{\mrm{op}})$ and $N\in \cat{D}(A)$, $M\ot^{\mrm{L}}_{A} N$ is the derived tensor product of $M$ and $N$.
$\cat{C}(A)$, $\cat{K}(A)$, and $\cat{D}(A)$ are all equipped with a translation functor $[1]$: $(M[1])^{i} = M^{i+1}$, and $\partial^{i}_{M[1]} = -\partial^{i+1}_{M}$. $[n]:= [1]^{n}$.

\end{notation}

\begin{convention}
Suppose $A$ and $B$ are rings, and $F:\cat{M}(A)\to \cat{M}(B)$ an additive functor. Through termwise application, $F$ extends to a functor $\cat{C}(A)\to \cat{C}(B)$ and thus induces a triangulated functor $\cat{K}(A)\to \cat{K}(B)$. We will denote both of these extensions by $F$ as well. Similarly, if $G:\cat{M}(A)\to \cat{M}(B)$ is another additive functor and  $\alpha:F\to G$ a morphism of functors, then $\alpha$ naturally extends to a morphism from $F$ to $G$ viewed as functors from either $\cat{C}(A)\to \cat{C}(B)$ or $\cat{K}(A)\to \cat{K}(B)$. We will continue to denote these morphisms of functors by $\alpha$. In the second case, $\alpha$ is a morphism of triangulated functors.
\end{convention}

\section{Functors} \label{s:02}

\begin{definition}(\cite[Definition 2.2]{VY}) \label{d:01}
Let $\cat{C}$ be a category. 
\begin{enumerate}
\item A \emph{copointed functor} on $\cat{C}$ is a pair $(F,\si)$, where $F:\cat{C}\to \cat{C}$ is a functor and $\si: F\to \opn{id}_{\cat{C}}$ is a morphism of functors.
\item A copointed functor $(F,\si)$ is called \emph{idempotent} if the morphisms $F(\si_{C}): F(F(C))\to F(C)$ and  $\si_{F(C)}: F(F(C))\to F(C)$ are isomorphisms for all objects $C \in \cat{C}$.
\item If $\cat{C}$ is a triangulated category, $F$ a triangulated functor and $\si$ a morphism of triangulated functors, we call $(F,\si)$ a \emph{copointed triangulated functor.}
\end{enumerate}
\end{definition}

\begin{definition}(\cite[Definition 2.8]{VY}) \label{d:07}
Let $\cat{C}$ be a category.
\begin{enumerate}
\item A \emph{pointed functor} on $\cat{C}$ is a pair $(G, \tau)$, where $G:\cat{C}\to \cat{C}$ is a functor and $\tau: \opn{id}_{\cat{C}}\to G$ is a morphism of functors.
\item A pointed functor $(G,\tau)$ is called \emph{idempotent} if the morphisms $G(\tau_{C}): G(C)\to G(G(C))$ and $\tau_{G(C)}: G(C)\to G(G(C))$ are isomorphisms for all objects $C\in \cat{C}$.
\item If $\cat{C}$ is a triangulated category, $G$ a triangulated functor and $\tau$ a morphism of triangulated functors, we call $(G,\tau)$ a \emph{pointed triangulated functor.}
\end{enumerate}
\end{definition}

\begin{remark}
Some of notions formalised in Definition \ref{d:01} appear in the literature under other names. Kashiwara and Schapira call idempotent pointed functors \emph{projectors} (\cite[Definition 4.1]{KS}).    Other terminology comes from the theory of monads: the reader may convince themselves that the data of an idempotent copointed functor is precisely that of an \emph{idempotent comonad}. Another name for the same notion is a \emph{colocalization functor} e.g. \cite[2.4]{Kr}. Unlike \cite{Kr}, we have included the morphism $\si$ in the data of our definition, as opposed to simply mandating its existence. 
\end{remark}

\begin{lemma} \label{l:01}
Let $(F,\si)$ be an idempotent copointed functor on a category $\cat{C}$. Then, the morphisms $F(\si_{C})$ and $\si_{F(C)}$ are equal for all objects $C\in \cat{C}$.
\end{lemma}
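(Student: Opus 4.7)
The plan is to derive $F(\sigma_C) = \sigma_{F(C)}$ entirely from naturality of $\sigma$, using the fact that idempotence supplies several invertible morphisms which allow cancellation at appropriate stages.

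First I would apply the naturality square for $\sigma$ to the morphism $\sigma_C: F(C)\to C$ itself, obtaining $\sigma_C\circ F(\sigma_C) = \sigma_C \circ \sigma_{F(C)}$. This is the most obvious identity linking the two morphisms, but $\sigma_C$ need not be monic, so we cannot cancel it directly. To produce a usable equation I would apply $F$ to both sides and then cancel the resulting $F(\sigma_C)$ on the left, which is permitted because idempotence asserts $F(\sigma_C)$ is an isomorphism. This yields the equality $F(F(\sigma_C)) = F(\sigma_{F(C)})$ of morphisms $F^3(C)\to F^2(C)$.

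Next I would exploit naturality of $\sigma$ at the two morphisms $F(\sigma_C)$ and $\sigma_{F(C)}$ separately, which yield
\[ \sigma_{F(C)}\circ F(F(\sigma_C)) = F(\sigma_C)\circ \sigma_{F(F(C))} \]
and
\[ \sigma_{F(C)}\circ F(\sigma_{F(C)}) = \sigma_{F(C)}\circ \sigma_{F(F(C))} \]
respectively. Substituting the identity $F(F(\sigma_C)) = F(\sigma_{F(C)})$ produced in the previous step, the left-hand sides coincide, so we obtain $F(\sigma_C)\circ \sigma_{F(F(C))} = \sigma_{F(C)}\circ \sigma_{F(F(C))}$. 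Applying the idempotence hypothesis at the object $F(C)$ now guarantees that $\sigma_{F(F(C))}$ is an isomorphism, which may therefore be cancelled on the right to conclude $F(\sigma_C) = \sigma_{F(C)}$.

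The main subtlety is recognising that the immediate naturality equation $\sigma_C\circ F(\sigma_C) = \sigma_C\circ \sigma_{F(C)}$ is not by itself sufficient, and that idempotence must be invoked twice — once at $C$ to pass to an equality at the $F^3(C)\to F^2(C)$ level, and once at $F(C)$ to cancel a right factor — before the desired equality drops out. Once this two-step cancellation pattern is spotted, the argument is purely formal diagram chasing.
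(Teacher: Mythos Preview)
Your argument is correct. The paper does not actually give a proof of this lemma; it simply cites \cite[Proposition 4.1.2]{KS} and instructs the reader to dualise. Your write-up supplies precisely such a self-contained dualised argument: the two-step use of idempotence (first cancelling $F(\sigma_C)$ on the left after applying $F$, then cancelling $\sigma_{F(F(C))}$ on the right) is the standard way this lemma is proved, and is essentially what one finds when unpacking the cited reference. In that sense your proof is more informative than the paper's, while being the same in spirit.
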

\begin{proof}
This is \cite[Proposition 4.1.2]{KS}, dualised to the copointed case.
\end{proof}

 \begin{definition} \label{d:10} Let $\cat{C}$ be a category.
 \begin{enumerate}
 \item If $(F,\si)$ is an idempotent copointed functor on $\cat{C}$, define \[\cat{C}_{F}:= \{C\in \cat{C} \ \vert \ \si_{C} \ \text{is an isomorphism}\}.\]
 \item If $(G,\tau)$ is an idempotent pointed functor on $\cat{C}$, define \[\cat{C}_{G}:= \{C\in \cat{C} \ \vert \ \tau_{C} \ \text{is an isomorphism}\}.\]
 \end{enumerate}
 \end{definition}
 
 \begin{remark}
 In the literature, the category $\cat{C}_{F}$ is often called the category of $F$-colocal objects, and  $\cat{C}_{G}$ the category of $G$-local  objects (\cite[2.5]{Kr}). 
 \end{remark}
 
 The following lemma is standard, but we provide a proof for completeness.
 
 \begin{lemma} \label{l:08}
 Let $(F,\si)$ be an idempotent copointed functor on a category $\cat{C}$. For any $C\in \cat{C}$, $F(C)\in \cat{C}_{F}$. The functor $F:\cat{C}\to \cat{C}_{F}$ is right adjoint to the inclusion $\opn{inc}:\cat{C}_{F}\to \cat{C}.$
 \end{lemma}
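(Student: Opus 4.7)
The first claim, that $F(C) \in \cat{C}_F$ for any $C \in \cat{C}$, is essentially immediate: by the very definition of idempotency in Definition \ref{d:01}(2), the map $\si_{F(C)} : F(F(C)) \to F(C)$ is an isomorphism, which is precisely the condition membership in $\cat{C}_F$ requires.

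For the adjunction, my plan is to exhibit a natural bijection
\[\mrm{Hom}_{\cat{C}}(\opn{inc}(D), C) \iso \mrm{Hom}_{\cat{C}_F}(D, F(C))\]
for $D \in \cat{C}_F$ and $C \in \cat{C}$, with counit given by $\si_C : F(C) \to C$ and unit given by $\si_D^{-1} : D \to F(D)$ (which makes sense precisely because $D \in \cat{C}_F$). Concretely, given $f : D \to C$, I would define its transpose as $F(f) \circ \si_D^{-1} : D \to F(C)$; given $g : D \to F(C)$, I would define its transpose as $\si_C \circ g : D \to C$.

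To verify these two constructions are mutually inverse, one direction is immediate from naturality of $\si$ applied to $f$: $\si_C \circ F(f) \circ \si_D^{-1} = f \circ \si_D \circ \si_D^{-1} = f$. The other direction requires a little more care: starting from $g : D \to F(C)$, the composite is $F(\si_C) \circ F(g) \circ \si_D^{-1}$. Here I would invoke Lemma \ref{l:01} to rewrite $F(\si_C) = \si_{F(C)}$, and then naturality of $\si$ applied to the morphism $g : D \to F(C)$ gives $\si_{F(C)} \circ F(g) = g \circ \si_D$, so the composite collapses to $g$. Naturality of the bijection in both $D$ and $C$ follows routinely from naturality of $\si$ and functoriality of $F$.

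There is no real obstacle here; the only mildly subtle point is recognising that one needs both parts of the idempotency hypothesis (via Lemma \ref{l:01}) to close the loop in the second verification, rather than only the invertibility of $\si_D$ on objects of $\cat{C}_F$.
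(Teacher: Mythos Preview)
Your proof is correct and follows essentially the same approach as the paper: you construct the same pair of mutually inverse maps $f \mapsto \si_{C'} \circ f$ and $f \mapsto F(f) \circ \si_C^{-1}$ that the paper does. The paper simply declares the verification ``straightforward'' without writing out the details, whereas you spell them out (including the appeal to Lemma~\ref{l:01}), so your argument is in fact more complete.
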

 \begin{proof}
 From the definition of an idempotent copointed functor, it is clear that $F(C)\in \cat{C}_{F}$ for any $C\in \cat{C}$.
 
 Suppose $C\in \cat{C}_{F}$, and $C'\in \cat{C}$. Consider the following two maps: \[\mathrm{Hom}_{\cat{C}}(C,F(C'))\to \mathrm{Hom}_{\cat{C}}(C,C'),\] \[f\mapsto \si_{C'}\circ f,\] and \[\mathrm{Hom}_{\cat{C}}(C,C')\to \mathrm{Hom}_{\cat{C}}(C,F(C')),\] \[f\mapsto F(f)\circ \si_{C}^{-1}.\] Verifying that these two correspondences are natural in both variables and inverse to each other is straightforward.
 \end{proof}
 
 There is, of course, a dual version to Lemma \ref{l:08}. We state it here for completeness, but omit the proof.
 
 \begin{lemma} \label{l:09}
 Let $(G,\tau)$ be an idempotent pointed functor on a category $\cat{C}$. For any $C\in \cat{C}$, $G(C)\in \cat{C}_{G}$. The functor $G:\cat{C}\to \cat{C}_{G}$ is left adjoint to the inclusion $\opn{inc}:\cat{C}_{G}\to \cat{C}.$ \qed
\end{lemma}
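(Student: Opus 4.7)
The plan is to dualize the proof of Lemma \ref{l:08} in the obvious manner, keeping careful track of the direction of morphisms and invoking the dual of Lemma \ref{l:01} (which asserts $G(\tau_{C}) = \tau_{G(C)}$ for all $C\in \cat{C}$) at the appropriate step. Since each ingredient in the proof of Lemma \ref{l:08} has an evident counterpart obtained by reversing all arrows in $\cat{C}$, the argument is essentially mechanical.

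First I would verify that $G(C)\in \cat{C}_{G}$ for every $C\in \cat{C}$: this is immediate from the definition of an idempotent pointed functor, which stipulates that $\tau_{G(C)}: G(C)\to G(G(C))$ is an isomorphism. Next, for $C'\in \cat{C}$ and $C\in \cat{C}_{G}$, I would construct two mutually inverse maps
\[ \Phi: \mrm{Hom}_{\cat{C}}(G(C'), C)\to \mrm{Hom}_{\cat{C}}(C', C), \qquad f\mapsto f\circ \tau_{C'}, \]
\[ \Psi: \mrm{Hom}_{\cat{C}}(C', C)\to \mrm{Hom}_{\cat{C}}(G(C'), C), \qquad g\mapsto \tau_{C}^{-1}\circ G(g). \]
Note that $\Psi$ is well defined precisely because $C\in \cat{C}_{G}$, so $\tau_{C}$ is invertible. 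Verifying that $\Phi$ and $\Psi$ are natural in both $C$ and $C'$ is straightforward from the naturality of $\tau$.

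For the composition $\Psi\circ \Phi$, naturality of $\tau$ applied to $f:G(C')\to C$ gives $\tau_{C}\circ f = G(f)\circ \tau_{G(C')}$, and the dual of Lemma \ref{l:01} replaces $\tau_{G(C')}$ with $G(\tau_{C'})$; rearranging yields $\tau_{C}^{-1}\circ G(f)\circ G(\tau_{C'}) = f$, i.e. $\Psi(\Phi(f)) = f$. For $\Phi\circ \Psi$, naturality of $\tau$ applied to $g:C'\to C$ gives $\tau_{C}\circ g = G(g)\circ \tau_{C'}$, hence $\Phi(\Psi(g)) = \tau_{C}^{-1}\circ G(g)\circ \tau_{C'} = g$. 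This exhibits $G\dashv \opn{inc}$. There is no real obstacle in this argument; the only step requiring genuine attention is the invocation of the dualized Lemma \ref{l:01} in verifying $\Psi\circ \Phi = \opn{id}$, since this is where idempotency (as opposed to merely pointedness) is essential.
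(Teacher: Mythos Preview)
Your proposal is correct and is exactly the dualization of Lemma~\ref{l:08} that the paper has in mind; the paper itself omits the proof entirely, remarking only that it is dual to Lemma~\ref{l:08}. Your explicit verification, including the appeal to the dual of Lemma~\ref{l:01} for the identity $\Psi\circ\Phi = \opn{id}$, fills in precisely the details the paper leaves to the reader.
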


 Let $A$ and $B$ be rings, and $F:\cat{M}(A)\to \cat{M}(B)$ a left exact functor.  $F$ admits a right derived functor $(\mrm{R}F, \eta)$, where $\mrm{R}F:\cat{D}(A)\to \cat{D}(B)$ and $\eta: F\to \mrm{R}F$. Recall that a complex $M\in \cat{K}(A)$ is said to be \emph{right} $F$\emph{-acyclic} if the map $\eta_{M}:F(M)\to \mrm{R}F(M)$ is an isomorphism in $\cat{D}(B)$. If $M\in \cat{M}(A)$, $M$ is right $F$-acyclic if and only if $\mrm{R}^{i}F(M)=0$ for all $i\geq 1$, where $\mrm{R}^{i}F:= \opn{H}^{i}(\mrm{R}F)$.  The \emph{right cohomological dimension} of $F$ is $\opn{sup}\{i\in \mathbb{N} \vert \ \mrm{R}^{i}F \neq 0\}.$
 
 \section{Torsion classes and weak stability} \label{s:07}

Let $A$ be a ring. A \emph{torsion class} in $\cat{M}(A)$ is a class of objects $\cat{T}\subseteq \cat{M}(A)$ closed under submodules, extensions, quotients, and arbitrary direct sums. Every torsion class $\cat{T}$ is associated with a left exact additive functor $\Gamma_{\cat{T}}:\cat{M}(A)\to \cat{M}(A)$: if $M\in \cat{M}(A)$, $\Gamma_{\cat{T}}(M)$ is the largest submodule of $M$ which lies in the $\cat{T}$. The canonical inclusions $\Gamma_{\cat{T}}(M)\to M$, as $M$ varies over $\cat{M}(A)$, assemble together to define a morphism of functors $\si: \Gamma_{\cat{T}}\to \opn{id}_{\cat{M}(A)}$. $(\Gamma_{\cat{T}},\si)$ is an idempotent copointed functor.

\begin{remark}
The notion we have described above is often called a \emph{hereditary} torsion class in the literature, with the term `hereditary' signifying closure under subobjects. Here, we have dropped the adjective.
\end{remark}

Every torsion class $\cat{T}$ can be associated with a set of left ideals of $A$, $\opn{Filt}(\cat{T})$, called the \emph{Gabriel filter} of $\cat{T}$. A left ideal $\mathfrak{i}\in \opn{Filt}(\cat{T})$ if and only if $A/\mathfrak{i} \in \cat{T}$. The functor $\Gamma_{\cat{T}}$ also admits a description in terms of $\opn{Filt}(\cat{T})$: if $M\in \cat{M}(A)$,
\begin{equation} \label{e:01}
\Gamma_{\cat{T}}(M) = \varinjlim_{\mathfrak{i}\in \opn{Filt}(A)} \mrm{Hom}_{A}(A/\mfrak{i},M).
\end{equation}
Note that $\opn{Filt}(\cat{T})$ is a directed poset under reverse inclusion. 

By the defining property of right derived functors, the morphism of functors $\si: \Ga_{\cat{T}}\to \opn{id}_{\cat{K}(A)}$ induces a morphism of triangulated functors 

\begin{equation} \label{e:15}
\si^{\mrm{R}}: \mrm{R}\Ga_{\cat{T}} \to \opn{id}_{\cat{D}(A)}.
\end{equation}

The pair $(\mrm{R}\Ga_{\cat{T}}, \si^{\mrm{R}})$ is a copointed functor on $\cat{D}(A)$.

Torsion classes are ubiquitous.  For example, if $\mfrak{a}$ is an ideal in $A$, there is the class 
\begin{equation} \label{e:05}
\cat{T}_{\mfrak{a}}: = \{M\in \cat{M}(A)\ \vert \  \forall m\in M, \ \exists n\in \mathbb{N} \ \text{such that} \  \mfrak{a}^{n}m = 0\}.
\end{equation}

When $\mfrak{a}$ is finitely generated as a left ideal, $\cat{T}_{\mfrak{a}}$ is indeed a torsion class (\cite[Definition 3.3]{VY}) with associated torsion functor $\Ga_{\mfrak{a}} : = \Ga_{\cat{T}_{\mfrak{a}}}$, where $\Ga_{\mfrak{a}}$ picks out all the elements of a module which are annihilated by some power of $\mfrak{a}$. For another example, consider a left Ore set $S\subseteq A$. We can associate a torsion class $\cat{T}_{S}$ to $S$: 
 \begin{equation} \label{e:06}
 \cat{T}_{S} = \{M\in \cat{M}(A)\ \vert\  \forall m\in M,\ \exists s\in S \ \text{such that} \ sm= 0\}.
 \end{equation}
 
  $\Ga_{S} := \Ga_{\cat{T}_{S}}$ selects the elements of a module which are annihilated by some element of $S$. We will study $\cat{T}_{S}$ further in \S \ref{s:03}.
  
\begin{definition} (\cite[Definition 3.4]{VY}) \label{d:05}
Let $\cat{T}$ be a torsion class in $\cat{M}(A)$. $\cat{T}$ is said to be \emph{finite dimensional} if the functor $\Gamma_{\cat{T}}$ has finite right cohomological dimension.
\end{definition}

\begin{definition} (Gabriel, \cite[Chapter IV.7]{Ste}) \label{d:03}
Let $\cat{T}$ be a torsion class in $\cat{M}(A)$. $\cat{T}$ is called a \textit{stable} torsion class if for any injective $A$-module $E$, $\Gamma_{\cat{T}}(E)$ is injective. 
\end{definition}

A torsion class $\cat{T}$ is stable if and only if $\cat{T}$ is closed under essential extensions (\cite[Proposition 7.1]{Ste}). Goodearl and Jordan studied stability (without explicitly using the terminology) in the context of left denominator sets: in this setting, they further characterised it as the preservation of essentiality under localisation (\cite[Theorem 1]{GJ}). Perhaps the most important manifestation of stability is when it occurs in the torsion class associated to an ideal: if $\mfrak{a}$ is a finitely generated ideal with associated torsion class $\cat{T}_{\mfrak{a}}$, $\cat{T}_{\mfrak{a}}$ is stable precisely when $\mfrak{a}$ satisfies the \emph{left Artin-Rees} property (\cite[Lemma 13.1]{GW}).

\begin{definition} \cite[Definition 3.4]{VY} \label{d:02}
Let $\cat{T}$ be a torsion class in $\cat{M}(A)$. $\cat{T}$ is called a \textit{weakly stable} torsion class if for any injective $A$-module $E$, $\Gamma_{\cat{T}}(E)$ is right $\Gamma_{\cat{T}}$-acyclic. 
\end{definition}

Obviously, every stable torsion class is weakly stable. The converse is not true (\cite[Example 3.11]{VY}).

Weakly proregular sequences were introduced in commutative algebra by Alonso, Jeremias, and Lipman (\cite{AJL}). The notion of a weakly stable torsion class was introduced by Yekutieli and the author in \cite{VY}, with the intention of providing a characterisation of weak proregularity that could be generalised to the context of noncommutative ring theory (\cite[Theorem 4.12]{VY}). This, in turn, led to a noncommutative generalisation of the \emph{MGM equivalence} (\cite[Theorem 8.3]{VY}).

\begin{proposition} \label{p:02} \label{p:04}
Let $\cat{T}$ be a torsion class in $\cat{M}(A)$. Then, the following are equivalent:
\begin{enumerate}
\rmitem{i} $\cat{T}$ is weakly stable.
\rmitem{ii} $(\mrm{R}\Ga_{\cat{T}}, \si^{\mrm{R}})$ is an idempotent copointed functor on $\cat{D}^{+}(A)$.
\rmitem{iii} The morphism $\mrm{R}\Ga_{\cat{T}}(\si^{\mrm{R}}_{M}):\mrm{R}\Ga_{\cat{T}}(\mrm{R}\Ga_{\cat{T}}(M))\to \mrm{R}\Ga_{\cat{T}}(M)$ is an isomorphism for any $M\in \cat{D}^{+}(A)$.
\rmitem{iv} The morphism $\si^{\mrm{R}}_{\mrm{R}\Ga_{\cat{T}}(M)}:\mrm{R}\Ga_{\cat{T}}(\mrm{R}\Ga_{\cat{T}}(M))\to \mrm{R}\Ga_{\cat{T}}(M)$ is an isomorphism for any $M\in \cat{D}^{+}(A)$.
\end{enumerate}
If $\cat{T}$ is finite dimensional, all instances of $\cat{D}^{+}(A)$ in the above statement can be replaced by $\cat{D}(A)$.
\end{proposition}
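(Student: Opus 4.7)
The plan is to first observe that, by Definition \ref{d:01}, condition (ii) is precisely the conjunction of conditions (iii) and (iv). Thus it suffices to prove the two equivalences (i) $\Leftrightarrow$ (iii) and (i) $\Leftrightarrow$ (iv); because the two arguments are entirely parallel, I would write out (i) $\Leftrightarrow$ (iii) in detail and only sketch what changes in the other case.

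For (i) $\Rightarrow$ (iii), given $M \in \cat{D}^+(A)$, I would fix a quasi-isomorphism $M \to I$ with $I$ a bounded-below complex of injective $A$-modules, so that $\mrm{R}\Ga_{\cat{T}}(M)$ is represented by $\Ga_{\cat{T}}(I)$ and $\si^{\mrm{R}}_M$ by the termwise inclusion $\Ga_{\cat{T}}(I) \hookrightarrow I$. By weak stability, each $\Ga_{\cat{T}}(I^n)$ is right $\Ga_{\cat{T}}$-acyclic, so $\Ga_{\cat{T}}(I)$ is a bounded-below complex of right $\Ga_{\cat{T}}$-acyclic modules. A standard hyper-derived functor / inductive truncation argument then shows that $\mrm{R}\Ga_{\cat{T}}(\Ga_{\cat{T}}(I))$ is computed by termwise application of $\Ga_{\cat{T}}$, yielding $\Ga_{\cat{T}}(\Ga_{\cat{T}}(I)) = \Ga_{\cat{T}}(I)$ by idempotence of $\Ga_{\cat{T}}$ on modules. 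Under this identification, $\mrm{R}\Ga_{\cat{T}}(\si^{\mrm{R}}_M)$ is represented by the identity of $\Ga_{\cat{T}}(I)$, and is thus an isomorphism. The proof that (i) implies (iv) is analogous: $\si^{\mrm{R}}_{\mrm{R}\Ga_{\cat{T}}(M)}$ reduces, under the same identifications, to the inclusion $\Ga_{\cat{T}}(\Ga_{\cat{T}}(I)) \hookrightarrow \Ga_{\cat{T}}(I)$, which is again the identity.

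For the converse (iii) $\Rightarrow$ (i) (and analogously (iv) $\Rightarrow$ (i)), I would feed an arbitrary injective $A$-module $E$, viewed as an object of $\cat{D}^+(A)$ concentrated in degree $0$, into the hypothesis. Since $E$ is injective, $\mrm{R}\Ga_{\cat{T}}(E) \cong \Ga_{\cat{T}}(E)$ is concentrated in degree $0$, and condition (iii) then yields an isomorphism $\mrm{R}\Ga_{\cat{T}}(\Ga_{\cat{T}}(E)) \cong \Ga_{\cat{T}}(E)$ in $\cat{D}(A)$. Because the target lies in degree $0$, this forces $\mrm{R}^i \Ga_{\cat{T}}(\Ga_{\cat{T}}(E)) = 0$ for every $i \geq 1$, that is, $\Ga_{\cat{T}}(E)$ is right $\Ga_{\cat{T}}$-acyclic, which is exactly weak stability. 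Here no specific identification of the isomorphism is needed: mere existence of any isomorphism in $\cat{D}(A)$ between a complex and a module in degree zero already kills the higher cohomology of that complex.

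Finally, for the finite-dimensional upgrade to all of $\cat{D}(A)$, I would replace the bounded-below injective resolution with a K-injective resolution $M \to I$, which exists for every $M \in \cat{D}(A)$. The hypothesis that $\Ga_{\cat{T}}$ has finite right cohomological dimension is then used to ensure that $\mrm{R}\Ga_{\cat{T}}$ can still be computed by termwise application of $\Ga_{\cat{T}}$ to the possibly unbounded complex $\Ga_{\cat{T}}(I)$ of right $\Ga_{\cat{T}}$-acyclics---a standard consequence of the way-out/Spaltenstein formalism. With this in hand, the argument above transfers verbatim. I expect this last step to be the only real subtlety in the proof: that finite cohomological dimension is precisely what allows derived functors to be computed on unbounded resolutions by acyclic objects, and it is the sole place where the finite-dimensional hypothesis is invoked.
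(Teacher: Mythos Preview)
Your proposal is correct and follows essentially the same approach as the paper: both argue the forward implications by resolving $M$ by injectives and using that $\Ga_{\cat{T}}$ of an injective is right $\Ga_{\cat{T}}$-acyclic, and both argue the converse by plugging in a single injective module $E$. The paper is terser---it defers (i) $\Rightarrow$ (ii) to \cite[Theorem 2.6]{VY} rather than writing out the acyclic-resolution argument you give---but the content is the same; one small point to make explicit in the unbounded case is that your K-injective resolution should be chosen with termwise injective modules (which is always possible), so that each $\Ga_{\cat{T}}(I^n)$ is indeed $\Ga_{\cat{T}}$-acyclic.
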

\begin{proof}
(i) $\implies$ (ii): The proof in \cite[Theorem 2.6]{VY} works in this context as well.

(i) $\implies$ (iii), (iv): Both of these implications follow from the definition of weak stability.

(iii) $\implies$ (i): Let $I$ be an injective $A$-module. By hypothesis, the map 
\begin{equation} \label{e:04}
\mrm{R}\Ga_{\cat{T}}(\si^{\mrm{R}}_{I}):\mrm{R}\Ga_{\cat{T}}(\mrm{R}\Ga_{\cat{T}}(I))\to \mrm{R}\Ga_{\cat{T}}(I)
\end{equation}
 is an isomorphism. Since $I$ is injective, $\Ga_{\cat{T}}(I)\cong \mrm{R}\Ga_{\cat{T}}(I)$; thus, \ref{e:04} implies that $\opn{H}^{i}(\mrm{R}\Ga_{\cat{T}}(\Ga_{\cat{T}}(I))) = 0$ for all $i\geq 1$. This precisely says that $\Ga_{\cat{T}}(I)$ is right $\Ga_{\cat{T}}$ acyclic.
 
 (iv) $\implies$ (i): The proof of this is very similar to the proof of the implication (iii) $\implies$ (i).
 
If $\cat{T}$ is finite dimensional, the implication (i) $\implies$ (ii) is precisely the content of \cite[Theorem 2.6]{VY}. The rest of the proof is exactly as asbove.
\end{proof}

\begin{definition} (\cite[Definition 3.4]{VY})\label{d:08}
Let $\cat{T}$ be a torsion class in $\cat{M}(A)$). $\cat{T}$ is called \emph{quasi-compact} if each of the functors $\mrm{R}^{i}\Ga_{\cat{T}}: \cat{M}(A)\to \cat{M}(A)$, for $i\in \mathbb{N}$, commutes with arbitary direct sums.
\end{definition}

The reader may check that $\cat{T}$ is quasi-compact precisely when the functor $\mrm{R}\Ga_{\cat{T}}$ commutes with all direct sums which exist in $\cat{D}^{+}(A)$. If $\cat{T}$ is finite dimensional, then it is quasi-compact if and only if the functor $\mrm{R}\Ga_{\cat{T}}$ commutes with all direct sums in $\cat{D}(A)$. See \cite[Theorem 1.11]{VY}.

\begin{lemma} \label{l:02}
 Let $\cat{T}$ be a torsion class in $\cat{M}(A)$. Suppose the cohomological dimension of $\Gamma_{\cat{T}}$ is less than or equal to one. 
Then, $\Gamma_{\cat{T}}$ is weakly stable.
\begin{proof}
Let $E$ be an injective $A$-module. Consider the exact sequence $$0\rightarrow \Gamma_{\cat{T}}(E)\rightarrow E\rightarrow E/\Gamma_{\cat{T}}(E)\rightarrow 0.$$ 

The long exact sequence in cohomology arising from applying the derived functor $\mrm{R}\Ga_{\cat{T}}$ to the short exact sequence above begins with the following terms: \[0\to \Ga_{\cat{T}}(\Ga_{\cat{T}}(E))\to \Ga_{\cat{T}}(E) \to \Ga_{\cat{T}}(E/\Ga_{\cat{T}}(E)) \to \opn{H}^{1}(\mrm{R}\Gamma_{\cat{T}}(\Gamma_{\cat{T}}(E)))\to \opn{H}^{1}(\mrm{R}\Ga_{\cat{T}}(E)). \]

It follows that $\opn{H}^{1}(\mrm{R}\Gamma_{\cat{T}}(\Gamma_{\cat{T}}(E)))=0$. Our assumption on the cohomological dimension of $\Gamma_{\cat{T}}$ now tells us that $\Gamma_{\cat{T}}(E)$ is right $\Gamma_{\cat{T}}$-acyclic.
\end{proof}
\end{lemma}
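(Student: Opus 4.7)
The plan is to show directly that $\mrm{R}^{i}\Ga_{\cat{T}}(\Ga_{\cat{T}}(E)) = 0$ for all $i\geq 1$ whenever $E$ is an injective $A$-module, which is the defining property of weak stability. The hypothesis on cohomological dimension reduces the task to verifying that $\mrm{R}^{1}\Ga_{\cat{T}}(\Ga_{\cat{T}}(E))=0$, since all higher derived functors vanish automatically by assumption.

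The natural input is the short exact sequence
\[ 0\to \Ga_{\cat{T}}(E)\to E\to E/\Ga_{\cat{T}}(E)\to 0, \]
to which I would apply $\mrm{R}\Ga_{\cat{T}}$ and read off the resulting long exact cohomology sequence. The key piece of the sequence is the fragment
\[ \Ga_{\cat{T}}(E/\Ga_{\cat{T}}(E))\to \mrm{R}^{1}\Ga_{\cat{T}}(\Ga_{\cat{T}}(E))\to \mrm{R}^{1}\Ga_{\cat{T}}(E). \]
The right-hand term vanishes because $E$ is injective, so $\mrm{R}\Ga_{\cat{T}}(E)\cong \Ga_{\cat{T}}(E)$ as an object of $\cat{D}(A)$. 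For the left-hand term, I would invoke the standard fact that the quotient $E/\Ga_{\cat{T}}(E)$ is $\cat{T}$-torsion-free: any torsion submodule of $E/\Ga_{\cat{T}}(E)$ would lift (using closure of $\cat{T}$ under extensions) to a torsion submodule of $E$ strictly larger than $\Ga_{\cat{T}}(E)$, contradicting maximality. Hence $\Ga_{\cat{T}}(E/\Ga_{\cat{T}}(E))=0$, and the fragment forces $\mrm{R}^{1}\Ga_{\cat{T}}(\Ga_{\cat{T}}(E))=0$.

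There is no real obstacle here; the argument is essentially formal once one has the short exact sequence and the torsion-freeness of $E/\Ga_{\cat{T}}(E)$. The only subtlety to be careful about is making sure the $\Ga_{\cat{T}}$-torsion-freeness of $E/\Ga_{\cat{T}}(E)$ is properly justified from the axioms of a torsion class (closure under extensions, which immediately yields the maximality of $\Ga_{\cat{T}}(E)$ as a torsion submodule). Combining $\mrm{R}^{1}\Ga_{\cat{T}}(\Ga_{\cat{T}}(E))=0$ with the dimension hypothesis $\mrm{R}^{i}\Ga_{\cat{T}}=0$ for $i\geq 2$ yields right $\Ga_{\cat{T}}$-acyclicity of $\Ga_{\cat{T}}(E)$, completing the proof.
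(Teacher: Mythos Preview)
Your argument is correct and follows essentially the same route as the paper: both use the short exact sequence $0\to \Ga_{\cat{T}}(E)\to E\to E/\Ga_{\cat{T}}(E)\to 0$, apply $\mrm{R}\Ga_{\cat{T}}$, and read off $\mrm{R}^{1}\Ga_{\cat{T}}(\Ga_{\cat{T}}(E))=0$ from the long exact sequence together with the cohomological dimension hypothesis. You are simply more explicit than the paper in spelling out why the two flanking terms vanish (injectivity of $E$ and torsion-freeness of $E/\Ga_{\cat{T}}(E)$).
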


\section{Examples} \label{s:03}

\subsection*{Left denenominator sets}
Every left denominator set $S$ in a ring $A$ gives rise to an associated torsion theory 
$\cat{T}_{S}$, with corresponding torsion functor $$\Ga_{S}(M) := 
\Gamma_{\cat{T}_{S}}(M)=\{m\in M\ |\ sm=0\  \text{for some}\  s\in S\}.$$
As in \ref{e:15}, there is the morphism of functors $\si^{\mrm{R}}: \mrm{R}\Gamma_{S} \to \opn{id}_{\cat{D}(A)}$.

\begin{definition} \label{d:04}
Let $\opn{K}(A;S):= ( \cdots \to 0 \to A\to A_{S} \to 0 \to \cdots)$ denote the two term complex in $\cat{C}(A^{\mrm{en}})$ with $A$ in degree $0$ and $A_{S}$ in degree $1$. The differential in $\opn{K}(A;S)$ is the canonical localisation map.
\end{definition}

There is a morphism of complexes $e : \opn{K}(A;S)\to A$, sending $1\in \opn{K}(A;S)^{0}$ to $1\in A$,  in $\cat{C }(A^{\opn{en}})$. In $\cat{K}(A^{\mrm{en}})$, $e$ fits into a distinguished triangle \[ \opn{K}(A;S)\xrightarrow{e} A \to A_{S} \to \opn{K}(A;S)[1],\]
 where the morphism $A\to A_{S}$ is again the canonical localisation map.

\begin{lemma} \label{l:03}
Let $S$ be a left denominator set in a ring $A$. Then, there is a unique morphism \[\nu: \mrm{R}\Gamma_{S} \to \opn{K}(A;S)\ot_{A}^{\mrm{L}}(-)\] of triangulated functors from $\cat{D}(A)$ to itself, such that for any $M\in \cat{D}(A)$, $(e \ot_{A}^{\mrm{L}} \opn{id}_{M})\circ \nu_{M} =\si_{M}^{\mrm{R}}.$
\end{lemma}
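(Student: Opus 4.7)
The plan is to obtain $\nu_M$ as the unique lift of $\si_M^{\mrm{R}}$ along the distinguished triangle $\opn{K}(A;S)\ot_A^{\mrm{L}} M \xrightarrow{e\ot \opn{id}_M} M \to A_S\ot_A^{\mrm{L}} M \to \opn{K}(A;S)\ot_A^{\mrm{L}} M[1]$ in $\cat{D}(A)$, and then to deduce naturality and the triangulated structure automatically from pointwise uniqueness. The first step is to note that, since $S$ is a left denominator set, the Ore localisation $A_S$ is flat as a right $A$-module; hence $\opn{K}(A;S)$ is a bounded complex of right $A$-flats and in particular K-flat, so the displayed triangle arises by applying $(-)\ot_A^{\mrm{L}} M$ to the triangle $\opn{K}(A;S)\xrightarrow{e} A\to A_S\to \opn{K}(A;S)[1]$ in $\cat{D}(A^{\mrm{en}})$.

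The central observation is the vanishing $A_S\ot_A^{\mrm{L}} \mrm{R}\Gamma_S(M) = 0$. Representing $\mrm{R}\Gamma_S(M)$ by $\Gamma_S(I)$ for a K-injective resolution $M\to I$, flatness of $A_S$ gives $A_S\ot_A^{\mrm{L}} \Gamma_S(I) \cong A_S\ot_A \Gamma_S(I)$, and each component $A_S\ot_A \Gamma_S(I^n)$ is the Ore localisation of an $S$-torsion module, hence zero. Consequently, the composition $\mrm{R}\Gamma_S(M) \xrightarrow{\si_M^{\mrm{R}}} M \to A_S\ot_A^{\mrm{L}} M$ factors through $A_S\ot_A^{\mrm{L}} \mrm{R}\Gamma_S(M) = 0$ and therefore vanishes; applying $\mrm{Hom}_{\cat{D}(A)}(\mrm{R}\Gamma_S(M), -)$ to the triangle above then supplies a lift $\nu_M$ with $(e\ot \opn{id}_M)\circ \nu_M = \si_M^{\mrm{R}}$.

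For uniqueness, the extension-restriction adjunction along $A\to A_S$ gives, for any $L\in \cat{D}(A_S)$, $\mrm{Hom}_{\cat{D}(A)}(\mrm{R}\Gamma_S(M), L)\cong \mrm{Hom}_{\cat{D}(A_S)}(A_S\ot_A^{\mrm{L}} \mrm{R}\Gamma_S(M), L) = 0$. Specialising to $L = A_S\ot_A^{\mrm{L}} M[-1]$, which is naturally an $A_S$-module complex, makes the term preceding $\mrm{Hom}_{\cat{D}(A)}(\mrm{R}\Gamma_S(M), \opn{K}(A;S)\ot_A^{\mrm{L}} M)$ in the long exact sequence vanish, so $\nu_M$ is the unique such lift. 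Pointwise uniqueness then forces naturality in $M$ and compatibility with the shift functor: for any $f:M\to M'$ the maps $(\opn{K}(A;S)\ot_A^{\mrm{L}} f)\circ \nu_M$ and $\nu_{M'}\circ \mrm{R}\Gamma_S(f)$ both lift $\si_{M'}^{\mrm{R}}\circ \mrm{R}\Gamma_S(f)$ and hence coincide; the same principle handles shifts and distinguished triangles, making $\nu$ a morphism of triangulated functors. The chief technical point is to secure the vanishing $A_S\ot_A^{\mrm{L}} \mrm{R}\Gamma_S(M) = 0$ when $M$ (and hence $\mrm{R}\Gamma_S(M)$) is unbounded; this is handled cleanly at the chain level, since each term $\Gamma_S(I^n)$ is $S$-torsion and is killed termwise by the (exact) Ore localisation, bypassing any cohomological subtlety.
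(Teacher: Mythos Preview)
Your proof is correct and follows essentially the same approach as the paper: both use the distinguished triangle $\opn{K}(A;S)\ot_A^{\mrm{L}} M \to M \to A_S\ot_A^{\mrm{L}} M \to$, the key vanishing $A_S\ot_A^{\mrm{L}}\mrm{R}\Gamma_S(N)=0$ together with the extension--restriction adjunction along $A\to A_S$ to get $\mrm{Hom}_{\cat{D}(A)}(\mrm{R}\Gamma_S(N),A_S\ot_A^{\mrm{L}} M[i])=0$, and then deduce existence, uniqueness, naturality, and the triangulated structure of $\nu$ from this. The only cosmetic difference is that the paper packages existence and uniqueness together by observing that composition with $e\ot_A^{\mrm{L}}\opn{id}_M$ is a bijection on $\mrm{Hom}$-sets, whereas you treat them in two steps.
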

\begin{proof}
Let $M, N\in \cat{D}(A)$. There is an exact triangle \[\mrm{K}(A;S)\ot_{A}^{\mrm{L}}M\to M\to M_{S}\to (\mrm{K}(A;S)\ot_{A}^{\mrm{L}}M)[1]\] in $\cat{D}(A).$ The first map in this triangle is $e \ot_{A}^{\mrm{L}} \opn{id}_{M}.$

 We claim that $\mrm{Hom}_{\cat{D}(A)}(\mrm{R}\Gamma_{S}(N),M_{S}[i])=0$ for all $i\in \mathbb{Z}$. This is easy:  \[\mrm{Hom}_{\cat{D}(A)}(\mrm{R}\Gamma_{S}(N),M_{S}[i]) = \mrm{Hom}_{\cat{D}(A_{S})}(A_{S}\ot_{A}^{\mrm{L}}\mrm{R}\Gamma_{S}(N),M_{S}[i]),\] and $A_{S}\ot_{A}^{\mrm{L}}\mrm{R}\Gamma_{S}(N)=0$.

Recall that $\mrm{Hom}_{\cat{D}(A)}(\mrm{R}\Ga_{s}(N),-)$ is a cohomological functor from $\cat{D}(A)$ to $\cat{M}(\K)$. By considering the resulting long exact sequence in cohomology,  we note that $e\ot_{A}^{\mrm{L}}\opn{id}_{M}$ induces an isomorphism \[\mrm{Hom}_{\cat{D}(A)}(\mrm{R}\Gamma_{S}(N),\mrm{K}(A;S)\ot_{A}^{\mrm{L}}M)\xrightarrow{\cong} \mrm{Hom}_{\cat{D}(A)}(R\Gamma_{S}(N),M).\] 

Taking $N=M$, we see that there is a unique map \[\nu_{M}: \mrm{R}\Gamma_{S}(M)\to \mrm{K}(A;S)\ot_{A}^{\mrm{L}}M\] such that  ${(e\ot_{A}^{\mrm{L}} \opn{id}_{M})}\circ {{\nu}}_{M}= {\si^{\mrm{R}}_{M}}.$ 

Allowing $N$ to vary arbitrarily over $\cat{D}(A)$, we conclude that $\nu$ is a morphism of functors: if $f: N\to M$ is a morphism in $\cat{D}(A)$, then both $(\opn{id}_{\mrm{K}(A;S)}\ot_{A}^{\mrm{L}} f)\circ {\nu}_{N}$ and ${{\nu}_{M}} \circ \mrm{R}\Ga_{S}(f)$ are sent to ${\si^{\mrm{R}}_{M}} \circ \mrm{R}\Ga_{S}(f) = f \circ {\si^{\mrm{R}}_{N}}$ after composing with $e\ot_{A}^{\mrm{L}} \opn{id}_{M}.$ A similar argument shows that $\nu$ is a morphism of triangulated functors.
\end{proof}

\begin{remark} \label{r:02}
The style of argument used in the proof of Lemma \ref{l:03} is quite standard in the theory of localisation for triangulated categories (\cite[Proposition 4.9.1, Proposition 4.11.1]{Kr}).
\end{remark}

\begin{theorem} \label{p:03} \label{t:02}
Let $S$ be a left denominator set in a ring $A$. Then, the following are equivalent:
\begin{enumerate}
\rmitem{i} The torsion class $\cat{T}_{S}$ is weakly stable.
\rmitem{ii} For any right $\Ga_{S}$-acyclic module $M$, the canonical map $M\to M_{S}$ is surjective.
\rmitem{iii} For any injective module $E$, the canonical map $E\to E_{S}$ is surjective.
\rmitem{iv} $\nu:  \mrm{R}\Gamma_{S} \to \opn{K}(A;S)\ot_{A}^{\mrm{L}}(-)$ is an isomorphism of functors.
\rmitem{v} $\Gamma_{S}$ has cohomological dimension less than or equal to one.
\end{enumerate}
\end{theorem}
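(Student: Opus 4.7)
The plan is to prove the cycle $\tup{(ii)} \Rightarrow \tup{(iii)} \Rightarrow \tup{(iv)} \Rightarrow \tup{(v)} \Rightarrow \tup{(i)} \Rightarrow \tup{(ii)}$. Three of the five implications are essentially formal. The implication $\tup{(ii)} \Rightarrow \tup{(iii)}$ is immediate, since injective modules are right $\Ga_S$-acyclic. For $\tup{(iv)} \Rightarrow \tup{(v)}$, observe that $A_S$ is flat as a right $A$-module (the associated functor $A_S \ot_A (-): \cat{M}(A) \to \cat{M}(A_S)$ is exact left Ore localisation), so $\opn{K}(A;S)$ is a bounded complex of flat right $A$-modules; then for any left $A$-module $M$ we have $\opn{K}(A;S) \ot_A^{\mrm{L}} M \cong \opn{K}(A;S) \ot_A M = [M \to M_S]$, a complex concentrated in degrees $0$ and $1$, forcing $\mrm{R}^i\Ga_S(M) = 0$ for $i \geq 2$. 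Finally, $\tup{(v)} \Rightarrow \tup{(i)}$ is Lemma \ref{l:02}.

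For $\tup{(iii)} \Rightarrow \tup{(iv)}$, I would fix $M \in \cat{D}(A)$ with a K-injective resolution $M \cong E^\bullet$, so that $\mrm{R}\Ga_S(M) \cong \Ga_S(E^\bullet)$, while by the flatness of $\opn{K}(A;S)$ noted above $\opn{K}(A;S) \ot_A^{\mrm{L}} M \cong \opn{K}(A;S) \ot_A E^\bullet$. Condition (iii) says precisely that in each degree $j$ the natural map $\Ga_S(E^j) \hookrightarrow \opn{K}(A;S) \ot_A E^j$ that sends $\Ga_S(E^j) = \ker(E^j \to E^j_S)$ into degree $0$ is a quasi-isomorphism, since by (iii) its cone is an acyclic complex. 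Assembling over $j$ produces a quasi-isomorphism $\Ga_S(E^\bullet) \to \opn{K}(A;S) \ot_A E^\bullet$, and the uniqueness clause of Lemma \ref{l:03} identifies it with $\nu_M$.

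The hardest step, and the main obstacle, is $\tup{(i)} \Rightarrow \tup{(ii)}$. Let $M$ be right $\Ga_S$-acyclic. The first task is to verify that the submodule $\Ga_S(M)$ is also right $\Ga_S$-acyclic: since $\mrm{R}\Ga_S(M) \cong \Ga_S(M)$ is concentrated in degree zero, Proposition \ref{p:04} gives $\mrm{R}\Ga_S(\Ga_S(M)) \cong \mrm{R}\Ga_S(\mrm{R}\Ga_S(M)) \cong \mrm{R}\Ga_S(M) \cong \Ga_S(M)$. Applying $\mrm{R}\Ga_S$ to $0 \to \Ga_S(M) \to M \to M/\Ga_S(M) \to 0$, the first two terms are right $\Ga_S$-acyclic with identical $\Ga_S$, so the resulting triangle forces $\mrm{R}\Ga_S(M/\Ga_S(M)) = 0$. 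Next, the canonical map $M/\Ga_S(M) \to M_S$ is injective (its kernel is $\Ga_S(M/\Ga_S(M)) = 0$), and its cokernel $C$ lies in $\cat{T}_S$: any $s^{-1}m \in M_S$ satisfies $s \cdot (s^{-1}m) = m$, which vanishes in $C$. Applying $\mrm{R}\Ga_S$ to $0 \to M/\Ga_S(M) \to M_S \to C \to 0$ and using the vanishing just established yields $\Ga_S(M_S) \cong \Ga_S(C) = C$. But $S$ acts by automorphisms on the $A_S$-module $M_S$, so $\Ga_S(M_S) = 0$, forcing $C = 0$ and hence surjectivity of $M \to M_S$.
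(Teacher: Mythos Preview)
Your proof is correct, and the cycle you run through is the same as the paper's. The implications $\tup{(ii)} \Rightarrow \tup{(iii)}$, $\tup{(iv)} \Rightarrow \tup{(v)}$, $\tup{(v)} \Rightarrow \tup{(i)}$, and $\tup{(iii)} \Rightarrow \tup{(iv)}$ are handled essentially as in the paper (the paper is slightly more careful in the last of these, invoking \cite[Proposition 1.9]{PSY1} to justify the reduction to a single injective module, but your ``assembling'' step is valid because $\opn{K}(A;S)$ has amplitude bounded by one, so the acyclic assembly lemma applies directly).

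The genuine difference is in $\tup{(i)} \Rightarrow \tup{(ii)}$. The paper argues by contradiction: assuming the cokernel $J$ of $M \to M_S$ is nonzero, it builds a right $\Ga_S$-acyclic resolution $M \to M_S \to J^3 \to J^4 \to \cdots$ of $\Ga_S(M)$ (with $J^3$ an injective hull of $J$) and then computes $\mrm{R}^3\Ga_S(\Ga_S(M)) \neq 0$, contradicting the right $\Ga_S$-acyclicity of $\Ga_S(M)$. Your argument is more direct and arguably cleaner: you use the idempotence of $\mrm{R}\Ga_S$ from Proposition~\ref{p:04} to deduce $\mrm{R}\Ga_S(M/\Ga_S(M)) = 0$, and then the short exact sequence $0 \to M/\Ga_S(M) \to M_S \to C \to 0$ immediately forces $C \cong \Ga_S(M_S) = 0$. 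Your route avoids constructing any resolution and uses only degree-zero information in the final step; the paper's route, by contrast, is more hands-on and makes the obstruction (a nonvanishing higher derived functor) explicit.
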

\begin{proof}
(i) $\implies$ (ii): Suppose $\cat{T}_{S}$ is weakly stable, and let $M$ be a right $\Ga_{S}$-acyclic module. We wish to show that the natural map $M\to M_{S}$ is surjective.

Suppose it is not, and denote its (non-zero) cokernel by $J$; note that $\Ga_{S}(J)=J$. If $J^{3}$ is an injective hull of $J$, we can use the inclusion $J\to J^{3}$ to construct an exact sequence 
\begin{equation} \label{e:02}
0\to \Ga_{S}(M)\to M\to M_{S} \to J^{3}.
\end{equation}

As a right $A$-module, $A_{S}$ is flat. Thus, if $M_{S}\to K$ is an injective resolution of $M_{S}$ as $A_{S}$-modules, $K$ restricts to a complex of injective $A$-modules and can be used to compute $\mrm{R}\Ga_{S}(M_{S}).$ As every term of $K$ is an $A_{S}$-module, the complex $\Ga_{S}(K)=0$. Therefore $M_{S}$ is right $\Ga_{S}$-acyclic.
 
It follows that the three term complex $M\to M_{S} \to J^{3}$ excised from \ref{e:02} can be extended to a right $\Ga_{S}$-acyclic resolution of $\Ga_{S}(M):$ 
\begin{equation} \label{e:03}
M\to M_{S} \to J^{3}\to J^{4} \to \ldots.
\end{equation}

We can use the complex \ref{e:03} to compute $\mrm{R}\Ga_{S}(\Ga_{S}(M)).$ $0\neq J = \ker(\Ga_{S}(J^{3})\to \Ga_{S}(J^{4})).$ 
Since $\Ga_{S}(M_{S}) = 0$, we conclude that $\mrm{R}^{3}\Ga_{S}(\Ga_{S}(M))\neq 0.$ This contradicts the fact that $\Ga_{S}(M)$ is right $\Ga_{S}$-acyclic.

(ii) $\implies$ (iii): This is obvious. 

(iii) $\implies$ (iv): Assume that the natural map $I\to I_{S}$ is surjective, for any injective module $I$. We wish to show that $\nu$ is a natural isomorphism of functors.

 It is enough to show that \[{\nu}_{I}:\Ga_{S}(I)\to \opn{K}(A;S)\ot_{A}^{\mrm{L}} I\] is a quasi-isomorphism for every K-injective complex of injective modules $I$. The uniqueness of ${\nu}$ (Lemma \ref{l:03}) implies that ${\nu}_{{I}}$ is  the image in $\cat{D}(A)$ of the map $\xi_{I}: \Ga_{S}(I)\to \opn{K}(A;S)\ot_{A} I$ sending  $x\in \Ga_{S}(I^{i})$ to $(x,0)\in I^{i}\oplus I_{S}^{i-1} = (\opn{K}(A;S)\ot_{A} I)^{i}.$
 
 We wish to show that $\xi_{I}$ is a quasi-isomorphism. Since $\opn{K}(A;S)$ is a bounded complex of flat $A$-modules, it is enough to show this for a single injective module $E$ (\cite[Chapter 1.7]{RD} or \cite[Proposition 1.9]{PSY1}). However, this is precisely our assumption.

(iv) $\implies$ (v): This is obvious. 

(v)  $\implies$  (i): 
This follows from Lemma \ref{l:02}.
\end{proof}

\subsection*{Strongly idempotent ideals}
Let $\mfrak{a}$ be an ideal in $A$. In general, $\cat{T}_{\mfrak{a}}$ will not be a torsion class unless $\mfrak{a}$ is finitely generated as a left ideal. However, there is a specific instance where the finite generatedness of $\mfrak{a}$ is irrelevent: when $\mfrak{a}$ is idempotent i.e.~$\mfrak{a}  = \mfrak{a}^{2}.$ When $\mfrak{a}$ is idempotent, $\cat{T}_{\mfrak{a}}$ reduces to the class of modules $\{M\in \cat{M}(A)\ \vert \ \mfrak{a}M = 0\}$, and the functor $\Ga_{\mfrak{a}}$ is nothing but $\mrm{Hom}_{A}(A/\mfrak{a}, -)$. Idempotent ideals provide an effective `toy model' to test possible conjectures.

For any ideal $\mfrak{a}$, there is a canonical morphism \[\mu_{\a}: A/\mfrak{a} \otimes_{A}^{\mrm{L}} A/\mfrak{a} \to A/\mfrak{a}\] in $\cat{D}(A),$ induced from the multiplication $A/\mfrak{a} \otimes_{A} A/\mfrak{a}\to A/\mfrak{a}$. The reader may note that $\mfrak{a}$ is idempotent precisely when $\mrm{Tor}_{1}^{A}(A/\mfrak{a},A/\mfrak{a}) = \opn{H}^{-1}(A/\mfrak{a}\ot_{A}^{\mrm{L}} A/\mfrak{a}) =0.$ This motivates the following definition.

\begin{definition} [\cite{APT1}] \label{d:06}
Let $\mfrak{a}$ be an ideal in $A$. $\mfrak{a}$ is said to be \emph{strongly idempotent} when the canonical morphism $\mu_{A}:A/\mfrak{a}\otimes_{A}^{\mrm{L}} A/\mfrak{a} \to A/\mfrak{a}$ is an isomorphism in $\cat{D}(A)$.
\end{definition}

\begin{remark} \label{r:01}
Where should the morphism $A/\mfrak{a}\otimes_{A} ^{\mrm{L}} A/\mfrak{a} \to A/\mfrak{a}$ live? Through a judicious choice of resolution, we can construct it as a map in either $\cat{D}(A)$, $\cat{D}(A^{\mrm{op}})$, or $\cat{D}(\K)$ - what matters is that the choice of ambient category does not affect whether the map is an isomorphism or not. When $A$ is flat over $\K$, there is an easy and definitive answer to this question: the category $\cat{D}(A^{\mrm{en}})$. When $A$ is not flat over $\K$, there is also a definitive answer: the category $\cat{D}(A\ot^{\mrm{L}} A^{\mrm{op}})$. However, constructing  $\cat{D}(A\ot^{\mrm{L}} A^{\mrm{op}})$ is a delicate task: one first finds a dg quasi-ismorphism $\tilde{A}\to A$ where $\tilde{A}$ is non-positive and termwise flat over $\K$, and then defines $\cat{D}(A\ot^{\mrm{L}} A^{\mrm{op}}): = \cat{D}(\tilde{A}\ot \tilde{A}^{\mrm{op}})$. It turns out that this construction is independent  of choice of resolution. See \cite{AILN}, \cite{Ye3}, \cite{VY2}.
\end{remark}

 \begin{lemma} \label{l:13}
Let $\mfrak{a}$ be an ideal in $A$. Then, $\mfrak{a}$ is strongly idempotent if and only if $\opn{H}^{i}(A/\mfrak{a}\otimes_{A}^{\mrm{L}} A/\mfrak{a}) = 0$ for all $i\leq -1$.
\end{lemma}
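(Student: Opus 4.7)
The plan is to reduce the statement to a direct cohomological computation, observing that the complex $A/\mfrak{a}\otimes_{A}^{\mrm{L}} A/\mfrak{a}$ is automatically concentrated in non-positive degrees.

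First I would pick a projective (or just flat) resolution $P^{\bullet}\to A/\mfrak{a}$ with $P^{i}=0$ for all $i>0$, and represent the derived tensor product by the complex $P^{\bullet}\otimes_{A} A/\mfrak{a}$. This complex is again concentrated in non-positive degrees, so $\opn{H}^{i}(A/\mfrak{a}\otimes_{A}^{\mrm{L}} A/\mfrak{a}) = 0$ for every $i>0$, with no hypothesis needed. In degree zero, we get $\opn{H}^{0}(A/\mfrak{a}\otimes_{A}^{\mrm{L}} A/\mfrak{a}) \cong A/\mfrak{a}\otimes_{A} A/\mfrak{a}$, and the canonical multiplication map $A/\mfrak{a}\otimes_{A} A/\mfrak{a}\to A/\mfrak{a}$ is an isomorphism (the inverse sends $\bar{z}$ to $\bar{z}\otimes \bar{1}$). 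Under the identification $\opn{H}^{0}(\mu_{\a})$ is precisely this map, hence an isomorphism.

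For the forward direction, assume $\mu_{\a}$ is an isomorphism in $\cat{D}(A)$. Then it is a quasi-isomorphism, so the cohomology of $A/\mfrak{a}\otimes_{A}^{\mrm{L}} A/\mfrak{a}$ matches that of $A/\mfrak{a}$, which is concentrated in degree zero. In particular $\opn{H}^{i}(A/\mfrak{a}\otimes_{A}^{\mrm{L}} A/\mfrak{a})=0$ for all $i\leq -1$. Conversely, assume $\opn{H}^{i}(A/\mfrak{a}\otimes_{A}^{\mrm{L}} A/\mfrak{a})=0$ for all $i\leq -1$. Combined with the automatic vanishing in positive degrees noted above, this shows the complex is concentrated in degree zero. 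Since $\mu_{\a}$ realises the canonical isomorphism on $\opn{H}^{0}$, it must be a quasi-isomorphism, hence an isomorphism in $\cat{D}(A)$.

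There is no significant obstacle here; the only minor point to keep track of is the choice of ambient category flagged in Remark \ref{r:01}, but since the lemma is stated in $\cat{D}(A)$ and cohomology is computed via a projective resolution over $A$, no subtleties of the $A\ot^{\mrm{L}} A^{\mrm{op}}$ construction are needed. The heart of the argument is simply that non-positive concentration of the derived tensor product is automatic, so strong idempotence reduces to the vanishing of all the negative Tor groups $\opn{Tor}_{j}^{A}(A/\mfrak{a},A/\mfrak{a})$ for $j\geq 1$.
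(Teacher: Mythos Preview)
Your proof is correct and follows exactly the same idea as the paper's one-line argument: the paper simply records that $\opn{H}^{0}(\mu_{\a})$ is always an isomorphism and that $\opn{H}^{i}(A/\a)=0$ for $i\leq -1$, leaving the reader to supply the non-positive concentration of $A/\mfrak{a}\otimes_{A}^{\mrm{L}} A/\mfrak{a}$ and the resulting equivalence. You have written out precisely those details.
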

\begin{proof}
$\opn{H}^{0}(\mu_{\a})$ is always an isomorphism, and $\opn{H}^{i}(A/\a)=0$ for $i\leq -1$. 
\end{proof}

\begin{proposition} \label{p:05}
Let $\mfrak{a}$ be an ideal in $A$. Then, $\mfrak{a}$ is strongly idempotent if and only if the torsion class $\cat{T}_{\mfrak{a}}$ is weakly stable.
\end{proposition}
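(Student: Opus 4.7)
The plan is to leverage strong idempotence---which implies ordinary idempotence, since strong idempotence forces $\opn{H}^{-1}(A/\a \ot_{A}^{\mrm{L}} A/\a) \cong \a/\a^{2}$ to vanish---so that $\Ga_{\a}$ coincides with $\mrm{Hom}_{A}(A/\a,-)$ and hence $\mrm{R}\Ga_{\a}$ with $\mrm{RHom}_{A}(A/\a,-)$. The derived tensor-hom adjunction gives a natural isomorphism $\mrm{R}\Ga_{\a}(\mrm{R}\Ga_{\a}(M)) \cong \mrm{RHom}_{A}(A/\a \ot_{A}^{\mrm{L}} A/\a,\, M)$, and I would verify that under this identification the copoint $\si^{\mrm{R}}_{\mrm{R}\Ga_{\a}(M)}$ corresponds to $\mrm{RHom}_{A}(\iota_{\a},\, M)$, where $\iota_{\a}: A/\a \to A/\a \ot_{A}^{\mrm{L}} A/\a$ is $\pi \ot_{A}^{\mrm{L}} \opn{id}_{A/\a}$ after identifying $A \ot_{A}^{\mrm{L}} A/\a = A/\a$ (with $\pi: A \to A/\a$ the quotient). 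The equation $\mu_{\a} \circ \iota_{\a} = \opn{id}_{A/\a}$ then exhibits $\iota_{\a}$ as a section of $\mu_{\a}$; in particular $\mu_{\a}$ is an isomorphism in $\cat{D}(A)$ if and only if $\iota_{\a}$ is.

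With this setup in hand, Proposition \ref{p:04}(iv) tells us that $\cat{T}_{\a}$ is weakly stable if and only if $\mrm{RHom}_{A}(\iota_{\a},\, M)$ is an isomorphism for every $M \in \cat{D}^{+}(A)$. The forward implication is then immediate: if $\a$ is strongly idempotent, then $\iota_{\a}$ is an isomorphism in $\cat{D}(A)$, whence so is $\mrm{RHom}_{A}(\iota_{\a},\, M)$ for every $M$.

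For the converse, I would set $C := \opn{cone}(\iota_{\a})$. Since $\opn{H}^{0}(\iota_{\a})$ is an isomorphism (it admits $\opn{H}^{0}(\mu_{\a})$ as a retraction, and the latter is easily seen to be an iso) and $\opn{H}^{i}(A/\a) = \opn{H}^{i}(A/\a \ot_{A}^{\mrm{L}} A/\a) = 0$ for $i > 0$, the cone $C$ has cohomology concentrated in strictly negative degrees. The weak stability hypothesis yields $\mrm{RHom}_{A}(C,\, M) = 0$ for every $M \in \cat{D}^{+}(A)$. A standard truncation-plus-injective-hull argument then forces $C = 0$: if $C \neq 0$, let $j < 0$ be maximal with $\opn{H}^{j}(C) \neq 0$, embed $\opn{H}^{j}(C)$ into an injective $A$-module $E$, and compose with the canonical truncation $C \to \opn{H}^{j}(C)[-j]$ to produce a nonzero morphism $C \to E[-j]$ in $\cat{D}(A)$---contradicting $\mrm{RHom}_{A}(C,\, E) = 0$. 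Hence $\iota_{\a}$, and therefore $\mu_{\a}$, is an isomorphism, so $\a$ is strongly idempotent.

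The main delicate step is the adjunction bookkeeping of the first paragraph: one must carefully verify that $\si^{\mrm{R}}$, which originates from the inclusion $\mrm{Hom}_{A}(A/\a, N) \hookrightarrow N$ induced by precomposition with $\pi$, corresponds under the derived tensor-hom adjunction to the map obtained by tensoring $\pi$ with $\opn{id}_{A/\a}$. Once that naturality is established, the remainder of the argument is formal---a one-line deduction for the forward direction, and a Yoneda-style triangulated computation for the converse.
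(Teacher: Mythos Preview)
Your argument is correct and rests on the same engine as the paper's proof---the derived tensor--hom adjunction translating between $A/\a \ot_{A}^{\mrm{L}} A/\a$ and $\mrm{RHom}_{A}(A/\a,\mrm{RHom}_{A}(A/\a,-))$---but the packaging differs. The paper works directly with the \emph{definition} of weak stability: it invokes Lemma~\ref{l:13} to rephrase strong idempotence as $\mrm{Tor}$-vanishing, then tests this against injective modules $E$ via the adjunction, obtaining precisely the statement that $\mrm{Hom}_{A}(A/\a,E)$ is right $\mrm{Hom}_{A}(A/\a,-)$-acyclic. You instead route through Proposition~\ref{p:04}(iv), identifying $\si^{\mrm{R}}_{\mrm{R}\Ga_{\a}(M)}$ with $\mrm{RHom}_{A}(\iota_{\a},M)$ for all $M\in\cat{D}^{+}(A)$, and then run a cone-plus-top-cohomology argument to recover $\iota_{\a}$ (hence $\mu_{\a}$) being an isomorphism. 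Your truncation/injective-hull step is exactly the content of the paper's ``$\opn{Tor}$ vanishes $\iff$ $\mrm{Hom}_{\cat{D}(A)}(-,E[n])$ vanishes for all injectives'' reduction, so the two arguments are equivalent in substance; the paper's is shorter because it never leaves the injective test objects, while yours buys a cleaner structural narrative (the section $\iota_{\a}$ of $\mu_{\a}$ and the idempotence criterion of Proposition~\ref{p:04}).

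One small wobble: your opening sentence derives ordinary idempotence from strong idempotence, but the identification $\Ga_{\a}=\mrm{Hom}_{A}(A/\a,-)$ is needed for \emph{both} directions, so for the converse you must take ordinary idempotence as a standing hypothesis (as the paper implicitly does throughout this subsection) rather than as a consequence.
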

\begin{proof}
By Lemma \ref{l:13}, $\a$ is strongly idempotent if and only if $\mrm{Hom}_{\cat{D}(A)}(A/\mfrak{a}\otimes_{A}^{\mrm{L}} A/\mfrak{a}, E[n]) = 0$ for all injective $A$-modules $E$, and $n\neq 0$. By the adjuction between $\mrm{RHom}_{A}$ and $\otimes_{A}^{\mrm{L}}$, \[\mrm{Hom}_{\cat{D}(A)}(A/\mfrak{a}\otimes_{A}^{\mrm{L}} A/\mfrak{a}, E[n]) \cong \mrm{Hom}_{\cat{D}(A)}(A/\mfrak{a}, \mrm{RHom}_{A}(A/\mfrak{a},E[n])).\]

Moreover, when $E$ is injective\[\mrm{Hom}_{\cat{D}(A)}(A/\mfrak{a}, \mrm{RHom}_{A}(A/\mfrak{a},E[n])) \cong \mrm{Hom}_{\cat{D}(A)}(A/\mfrak{a}, \mrm{Hom}_{A}(A/\mfrak{a},E)[n]),\] and \[\mrm{Hom}_{\cat{D}(A)}(A/\mfrak{a}, \mrm{Hom}_{A}(A/\mfrak{a},E)[n]) \cong \opn{H}^{n}(\mrm{RHom}_{A}(A/\mfrak{a}, \mrm{Hom}_{A}(A/\mfrak{a},E))).\]

Thus $\mu_{\a}: A/\mfrak{a} \otimes_{A}^{\mrm{L}} A/\mfrak{a} \to A/\mfrak{a}$ is an isomorphism in $\cat{D}(A)$ if and only if, for all injective $E\in \cat{M}(A)$, $\mrm{Hom}_{A}(A/\mfrak{a},E)$ is right acyclic with respect to the functor $\mrm{Hom}_{A}(A/\mfrak{a},-)$; this is precisely what it means for $\cat{T}_{\mfrak{a}}$ to be weakly stable.
\end{proof}

\begin{example} 
In \cite[Example 3]{APT1}, Auslander, Platzeck, and Todorov construct a strongly idempotent ideal 
$\a$ in a left and right Artinian ring $A$ such that $A/\a$ has infinite left projective dimension. Therefore $\Gamma_{\a} = \mrm{Hom}_{A}(A/\a, -)$ has infinite cohomological dimension.  

Since $A/\a$ has infinite projective dimension, it is not perfect. Thus $\mrm{R}\Ga_{\mfrak{a}}$ does not preserve direct sums in $\cat{D}(A)$.  However, $\cat{T}_{\mfrak{a}}$ \emph{is} quasi-compact, as $\mrm{R}\Ga_{\a}$ does preserve coproducts which exist in  $\cat{D}^{+}(A).$ In the next example, we show that even this cannot be taken for granted. 
\end{example}

\begin{example}
 Let $A$ be a non-noetherian Von Neumann regular commutative ring of finite global dimension; an example of such a ring is  $ \prod_{i\in \mathbb{N}} \mathbb{F}_{2}.$ As $A$ is Von Neumann regular, every $A$-module, left or right, is flat over $A$. Therefore, \emph{every} ideal in $A$ is strongly idempotent.
 
Since $A$ is not noetherian, there is a family of injective modules $\{E_{i}\}_{i\in \mathbb{N}}$ such that $\bigoplus_{i\in \mathbb{N}}E_{i}$ is not injective. By Baer's criterion, there is an ideal $\a$ in $A$ such that \[\mrm{Ext}^{1}_{A}(A/\a, \bigoplus_{i\in \mathbb{N}}E_{i})\neq 0.\]

It follows that $\mrm{R}\Gamma_{\a}$ does not preserve coproducts, even in $\cat{D}^{+}(A).$ Moreover, in this example $\Gamma_{\a}$ has finite cohomological dimension, since $A$ has finite global dimension.
\end{example}

\begin{example}
Every ideal in a commutative noetherian ring generates a weakly stable torsion class. This is not true in the noncommutative context. As an example, consider the augmentation ideal of $\mathcal{U}(\opn{sl}_{2}(\mathbb{C}))$, which we denote by $\mathcal{I}$. It is well known that $\mathcal{I}$ is idempotent but not strongly idempotent. In particular, $\mrm{Tor}_{3}^{\mathcal{U}(\opn{sl}_{2}(\mathbb{C}))}(\mathcal{U}(\opn{sl}_{2}(\mathbb{C}))/\mathcal{I}, \mathcal{U}(\opn{sl}_{2}(\mathbb{C}))/\mathcal{I})\neq 0.$
\end{example}

\section{Localisation} \label{s:04}

Let $\cat{T}$ be a torsion class in $\cat{M}(A)$. In \S \ref{s:02} we recalled how $\cat{T}$ can be associated with a functor $\Gamma_{\cat{T}}:\cat{M}(A) \to \cat{M}(A)$ and a set of left ideals $\opn{Filt}(\cat{T})$. In this section, we consider another functor that can be associated with $\cat{T}$: localisation. We will begin with a short overview of the definitions involved, recalling only the notions directly relevant to our work.  For full details see \cite[Chapter IX]{Ste}.

If $\i \in \opn{Filt}(\cat{T})$ and $a\in A$, the left ideal $a^{-1}\cdot \i := \{x\in A \ \vert \ xa\in \i\}\in \opn{Filt}(\cat{T}).$ From this, it follows that for any $M\in \cat{M}(A)$, the directed limit \[\varinjlim_{\i \in \opn{Filt}(\cat{T})} \mathrm{Hom}_{A}(\i,M)\] carries the natural structure of a left $A$-module: if $a\in A$ and $f: \i\to M$ is a representative of an element in $\varinjlim_{\i \in \opn{Filt}(\cat{T})} \mathrm{Hom}_{A}(\i,M)$, then $a\cdot f$ is given by the equivalence class of \[a^{-1}\cdot \i \xrightarrow{\cdot a}  \i \xrightarrow{f} M.\]

\begin{definition} \label{d:09}
Let $\cat{T}$ be a torsion class in $\cat{M}(A)$. Define the functor \[\Ja_{\cat{T}}:= \cat{M}(A)\to \cat{M}(A)\] by \[\Ja_{\cat{T}}(M) :=  \varinjlim_{\i \in \opn{Filt}(\cat{T})} \mathrm{Hom}_{A}(\i,M)\] for $M\in \cat{M}(A)$.
\end{definition}

If $M\in \cat{M}(A)$, there is a morphism $\tau_M: M\to \Ja_{\cat{T}}(M)$, arising from the canonical identification of $M$ with $\mathrm{Hom}_{A}(A,M)$. These morphisms assemble together to define a natural transformation $\tau: \opn{id}_{\cat{M}(A)}\to \Ja_{\cat{T}}$. The pair $(\Ja_{\cat{T}}, \tau)$ is a pointed functor. It is \emph{not} always the case that $(\Ja_{\cat{T}}, \tau)$ is idempotent. However, the following is true:

\begin{lemma} \label{l:07}
Let $\cat{T}$ be a torsion class in $\cat{M}(A)$. The morphisms $\tau_{\Ja_{\cat{T}}(M)}: \Ja_{\cat{T}}(M)\to \Ja_{\cat{T}}(\Ja_{\cat{T}}(M))$ and $\Ja_{\cat{T}}(\tau_{M}): \Ja_{\cat{T}}(M)\to \Ja_{\cat{T}}(\Ja_{\cat{T}}(M))$ coincide for any object $M\in \cat{M}(A)$.
\end{lemma}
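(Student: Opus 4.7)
The plan is to verify the equality directly on representatives in the directed colimit presentation of $\Ja_{\cat{T}}(\Ja_{\cat{T}}(M))$. Given a class $[f]\in \Ja_{\cat{T}}(M)$ represented by $f:\i\to M$ with $\i\in\opn{Filt}(\cat{T})$, I would compute both morphisms explicitly. On the one hand, $\tau_{\Ja_{\cat{T}}(M)}[f]$ is the class of the map $\widehat{[f]}:A\to \Ja_{\cat{T}}(M)$, $1\mapsto [f]$, since $A\in\opn{Filt}(\cat{T})$ always serves as an initial index. On the other hand, $\Ja_{\cat{T}}(\tau_M)[f]$ is the class of the map $\tau_M\circ f:\i\to \Ja_{\cat{T}}(M)$, obtained by post-composing $f$ with $\tau_M$.

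To show these represent the same element of the directed colimit $\varinjlim_{\j\in\opn{Filt}(\cat{T})}\mathrm{Hom}_A(\j,\Ja_{\cat{T}}(M))$, I would restrict $\widehat{[f]}$ along the inclusion $\i\hookrightarrow A$ and show the restriction equals $\tau_M\circ f$. The key ingredient is the explicit formula for the $A$-action on $\Ja_{\cat{T}}(M)$ recalled just before Definition \ref{d:09}: for $a\in A$, the element $a\cdot[f]$ is represented by the composition $a^{-1}\cdot\i\xrightarrow{\cdot a}\i\xrightarrow{f}M$. When $a\in\i$, one has $1\cdot a=a\in\i$, so $1\in a^{-1}\cdot \i$, which forces $a^{-1}\cdot\i=A$. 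Consequently the representing composition becomes the map $A\to M$, $x\mapsto f(xa)=xf(a)$, which is exactly $\tau_M(f(a))$.

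Combining these, for $a\in\i$ the restriction of $\widehat{[f]}$ sends $a\mapsto a\cdot[f]=\tau_M(f(a))=(\tau_M\circ f)(a)$. Thus $\widehat{[f]}|_{\i}=\tau_M\circ f$, and since $\i\subseteq A$ with both lying in $\opn{Filt}(\cat{T})$, the two representatives $\widehat{[f]}$ on $A$ and $\tau_M\circ f$ on $\i$ determine the same class in the colimit. Naturality in $[f]$ is automatic from the construction. The main technical point to watch is the $A$-module structure on $\Ja_{\cat{T}}(M)$ — specifically, invoking the identity $a^{-1}\cdot\i=A$ when $a\in\i$ so that the action formula collapses to a map out of all of $A$, making the comparison with $\tau_M\circ f$ transparent. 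Everything else is bookkeeping in the directed colimit.
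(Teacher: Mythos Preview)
Your argument is correct. The key computation---that for $a\in\i$ one has $a^{-1}\cdot\i=A$ (since $\i$ is a left ideal) and hence $a\cdot[f]=\tau_M(f(a))$---is exactly what is needed, and the conclusion that $\widehat{[f]}|_{\i}=\tau_M\circ f$ settles the equality in the colimit.

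The paper does not prove this lemma at all: it simply cites \cite[Lemma IX.1.4]{Ste}. Your direct verification is precisely the kind of argument one finds behind that reference, so in substance there is no divergence; you have just unpacked what the paper leaves to Stenstr\"om. One minor remark: the sentence about ``naturality in $[f]$'' is unnecessary---you are checking equality of two fixed maps pointwise, not verifying a natural transformation, so once the equality holds for every $[f]$ you are done.
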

\begin{proof}
This is the content of \cite[Lemma IX.1.4]{Ste}.
\end{proof}

The morphism of functors $\tau: \opn{id}_{\cat{M}(A)}\to \Ja_{\cat{T}}$ induces a morphism of functors $\tau^{R} : \opn{id}_{\cat{D}(A)}\to \mrm{R}\Ja_{\cat{T}}$. If $M\in \cat{D}(A)$ and $M\to I$ is a K-injective resolution of $M$, $\tau_{M}^{\mrm{R}}$ is the composite $M\to I\xrightarrow{\tau_{I}} \Ja_{\cat{T}}(I)$.

The functor $\Ja$ was introduced by Gabriel as a part of his theory of localisation for torsion classes. Suppose $A$ is a ring, and $\cat{T}$ a torsion class in $\cat{M}(A)$ with associated Gabriel filter $\opn{Filt}(\cat{T})$.  A module $M\in \cat{M}(A)$ is torsion-free, with respect to $\cat{T}$, precisely when the canonical map $\mrm{Hom}_{A}(A,M)\to \mrm{Hom}_{A}(A/\i,M)$ is injective for every left ideal $\i\in \opn{Filt}(A)$. Define $M$ to be $\cat{T}$\emph{-divisible} if and only if the canonical map $\mrm{Hom}_{A}(A,M)\to \mrm{Hom}_{A}(A/\i,M)$ is surjective for every left ideal $\i\in \opn{Filt}(A)$, and $\cat{T}$\emph{-closed} if it is both $\cat{T}$-divisible and torsion-free with respect to $\cat{T}$. 

 The localisation of $\cat{M}(A)$ at $\cat{T}$, $\cat{M}(A)_{\cat{T}}$, is defined as the full subcategory of $\cat{T}$-closed modules. This is an effective notion: a theorem of Gabriel and Popescu (see \cite[Theorem X.4.1]{Ste}) states that given any Grothendieck category $\cat{G}$, there is a ring $A$ and a torsion class $\cat{T}$ in $\cat{M}(A)$ such that $\cat{G}$ is equivalent to $\cat{M}(A)_{\cat{T}}$.

However, it is \emph{not} true, in general, that $\Ja_{\cat{T}}(M)$ is $\cat{T}$-closed for any $M$! Instead, one must apply $\Ja_{\cat{T}}$ twice: for any $M\in \cat{T}$, $\Ja_{\cat{T}}^{2}(M)$ is $\cat{T}$-closed, and the functor $\Ja_{\cat{T}}^{2}:\cat{M}(A) \to \cat{M}(A)_{\cat{T}}$ is left adjoint to the inclusion $\cat{M}(A)_{\cat{T}}\to \cat{M}(A)$ (\cite[Proposition IX.1.11]{Ste}). $\Ja_{\cat{T}}^{2}(A)$ does admit the structure of a ring equipped with a ring homomorphism $A\to \Ja_{\cat{T}}^{2}(A)$, and there is a canonical, fully faithful functor $\cat{M}(A)_{\cat{T}}\to \cat{M}(\Ja_{\cat{T}}^{2}(A))$. This functor is not always an equivalence; the case when it is is the context of \emph{perfect localisation} (\cite[Chapter XI]{Ste}). When $\cat{T}$ does induce a perfect localisation on $\cat{M}(A)$, $\Ja_{\cat{T}}^{2}(M) \cong \Ja_{\cat{T}}^{2}(A)\otimes_{A} M$ for all $M\in \cat{M}(A)$, the map  $A\to \Ja_{\cat{T}}^{2}(A)$ is an epimorphism in the category of rings, and $\Ja_{\cat{T}}^{2}(A)$ is flat as a right $A$-module (\cite[Proposition XI.3.4]{Ste}).

When $\cat{T}$ is stable, $\Ja_{\cat{T}} \cong \Ja_{\cat{T}}^{2}$ (\cite[Proposition IX.1.7]{Ste}). Later, we will show that the same holds for weakly stable torsion classes (Corollary \ref{c:03}).

\begin{proposition} \label{p:08}
Let $\cat{T}$ be a torsion class in $\cat{M}(A)$. Then, there is a morphism \[\lambda : \mrm{R}\Ja_{\cat{T}} \to \mrm{R}\Ga_{\cat{T}}[1]\]  of functors such that for every $M\in \cat{D}(A)$, there is a distinguished triangle \[\mrm{R}\Ga_{\cat{T}}(M)\xrightarrow{\si^{\mrm{R}}_{M}} M \xrightarrow{\tau^{\mrm{R}}_{M}} \mrm{R}\Ja_{\cat{T}}(M)\xrightarrow{\lambda_{M}} \mrm{R}\Ga_{\cat{T}}(M)[1]. \]
\end{proposition}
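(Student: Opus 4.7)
The plan is to construct $\lambda$ via an explicit short exact sequence of complexes obtained from K-injective resolutions. The key module-level input is the following claim: for any injective $A$-module $E$, the sequence
\[0 \to \Ga_{\cat{T}}(E) \xrightarrow{\si_{E}} E \xrightarrow{\tau_{E}} \Ja_{\cat{T}}(E) \to 0\]
is short exact. To verify this, apply $\mrm{Hom}_A(-,E)$ to $0\to \i \to A \to A/\i \to 0$ for each $\i \in \opn{Filt}(\cat{T})$. Since $\mrm{Ext}^{1}_A(A/\i,E)=0$, we obtain a short exact sequence $0 \to \mrm{Hom}_A(A/\i,E) \to E \to \mrm{Hom}_A(\i,E) \to 0$. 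Taking the directed colimit over $\opn{Filt}(\cat{T})$ (an exact operation) and applying \ref{e:01} together with Definition \ref{d:09} yields the desired sequence, and an inspection of the colimit maps confirms that the first map is $\si_E$ and the second is $\tau_E$.

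Next, fix a K-injective resolution $M \to I$ with $I$ termwise injective. Applying the preceding paragraph degreewise produces a short exact sequence of complexes
\[0 \to \Ga_{\cat{T}}(I) \xrightarrow{\si_{I}} I \xrightarrow{\tau_{I}} \Ja_{\cat{T}}(I) \to 0,\]
which gives rise to a distinguished triangle in $\cat{D}(A)$. By definition of the right derived functors, $\Ga_{\cat{T}}(I)$ represents $\mrm{R}\Ga_{\cat{T}}(M)$ and $\Ja_{\cat{T}}(I)$ represents $\mrm{R}\Ja_{\cat{T}}(M)$. Unwinding the descriptions of $\si^{\mrm{R}}$ and $\tau^{\mrm{R}}$ recalled in \ref{e:15} and just before the proposition shows that, under these identifications, the first two maps of the triangle coincide with $\si_M^{\mrm{R}}$ and $\tau_M^{\mrm{R}}$. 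Define $\lambda_M$ to be the connecting third map.

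For naturality, any $f:M\to N$ in $\cat{D}(A)$ lifts (uniquely up to homotopy) to a morphism $\tilde f: I \to J$ of K-injective resolutions; since $\si$ and $\tau$ are morphisms of functors on $\cat{M}(A)$, $\tilde f$ induces a morphism of the associated short exact sequences of complexes, hence a morphism between the associated triangles in $\cat{D}(A)$. This upgrades $M\mapsto \lambda_M$ to the sought morphism of functors $\mrm{R}\Ja_{\cat{T}} \to \mrm{R}\Ga_{\cat{T}}[1]$. The only non-formal ingredient is the injective-module calculation of the first paragraph; once that is available, everything else reduces to standard bookkeeping with K-injective resolutions and short exact sequences of complexes.
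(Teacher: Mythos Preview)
Your proof is correct and follows essentially the same route as the paper's: both establish the short exact sequence $0\to\Ga_{\cat{T}}(E)\to E\to \Ja_{\cat{T}}(E)\to 0$ for injective $E$ via the same $\mrm{Hom}$-and-colimit argument, then pass to K-injective resolutions by injectives to obtain the distinguished triangle. The only cosmetic difference is that the paper invokes a functorial choice of K-injective resolution to handle naturality, whereas you lift morphisms up to homotopy; these are equivalent.
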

\begin{proof}
If $E$ is an injective module, for every $\i\in \opn{Filt}{\cat{T}}$ the sequence of $\K$-modules \[0\to \mrm{Hom}_{A}(A/\mfrak{i},E)\to E\to \mrm{Hom}_{A}(\mfrak{i},E)\to 0\] is exact. Since directed colimits are exact in $\cat{M}(A)$, \[0\to \Ga_{\cat{T}}(E)\xrightarrow{\si_{E}} E\xrightarrow{\tau_{E}} \Ja_{\cat{T}}(E)\to 0\] is exact. This in turn implies that if $I'$ is a complex of injective modules, \[0\to \Ga_{\cat{T}}(I')\xrightarrow{\si_{I'}} I'\xrightarrow{\tau_{I'}} \Ja_{\cat{T}}(I')\to 0\] is exact.

Every complex in $\cat{D}(A)$ is functorially isomorphic to a K-injective complex of injective modules. If $M\in \cat{D}(A)$, and $M\to I$ is such a resolution, the sequence of morphisms \[\mrm{R}\Ga_{\cat{T}}(M)\xrightarrow{\si^{\mrm{R}}_{M}} M \xrightarrow{\tau^{\mrm{R}}_{M}} \mrm{R}\Ja_{\cat{T}}(M)\] can be represented by the sequence 
\begin{equation} \label{e:13}
\Ga_{\cat{T}}(I)\xrightarrow{\si_{I}} I\xrightarrow{\tau_{I}} \Ja_{\cat{T}}(I).
\end{equation}
 The sequence \ref{e:13} fits into a short exact sequence \[0\to \Ga_{\cat{T}}(I)\xrightarrow{\si_{I}} I\xrightarrow{\tau_{I}} \Ja_{\cat{T}}(I)\to 0\] in $\cat{C}(A)$. Therefore, it can be functorially associated with a morphism $\Ja_{\cat{T}}(I)\to \Ga_{\cat{T}}(I)[1]$ in $\cat{D}(A)$ (\cite[Example 10.4.9]{Wei}) such that \[\Ga_{\cat{T}}(I)\to I\to \Ja_{\cat{T}}(I)\to \Ga_{\cat{T}}(I)[1]\] is a distinguished triangle in $\cat{D}(A)$. As $I$ is a K-injective resolution of $M$, the morphism $\Ja_{\cat{T}}(I)\to \Ga_{\cat{T}}(I)[1]$ corresponds to a morphism $\mrm{R}\Ja_{\cat{T}}(M)\xrightarrow{\lambda_{M}} \mrm{R}\Ga_{\cat{T}}(M)[1]$ such that the triangle
\[\mrm{R}\Ga_{\cat{T}}(M)\xrightarrow{\si^{\mrm{R}}_{M}} M \xrightarrow{\tau^{\mrm{R}}_{M}} \mrm{R}\Ja_{\cat{T}}(M)\xrightarrow{\lambda_{M}} \mrm{R}\Ga_{\cat{T}}(M)[1]\] is distinguished. 
\end{proof}

\begin{proposition} \label{p:09}
Let $\cat{T}$ be a torsion class in $\cat{M}(A)$. Then, the following are equivalent: 
\begin{enumerate}
\rmitem{i} $\cat{T}$ is weakly stable.
\rmitem{ii} $(\mathrm{R}\Ja_{\cat{T}}, \tau^{\mathrm{R}})$ is an idempotent pointed functor on $\cat{D}^{+}(A)$.
\rmitem{iii} $\mathrm{R}\Ja_{\cat{T}}(\mathrm{R}\Ga_{\cat{T}}(M))=0$ for any $M\in \cat{D}^{+}(A)$.
\rmitem{iv} $\mathrm{R}\Ga_{\cat{T}}(\mathrm{R}\Ja_{\cat{T}}(M)) = 0$ for any $M\in \cat{D}^{+}(A)$.
\end{enumerate}
If $\cat{T}$ is finite dimensional, all instances of $\cat{D}^{+}(A)$ in the above statement can be replaced with $\cat{D}(A)$.
\end{proposition}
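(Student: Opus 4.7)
The plan is to derive all four equivalences from the distinguished triangle of Proposition~\ref{p:08},
\begin{equation*}
(\star)\qquad \mrm{R}\Ga_{\cat{T}}(M)\xrightarrow{\si^{\mrm{R}}_{M}} M \xrightarrow{\tau^{\mrm{R}}_{M}} \mrm{R}\Ja_{\cat{T}}(M)\xrightarrow{\lambda_{M}} \mrm{R}\Ga_{\cat{T}}(M)[1],
\end{equation*}
combined with the characterisation of weak stability furnished by Proposition~\ref{p:04}. The key observation is that applying $\mrm{R}\Ga_{\cat{T}}$ or $\mrm{R}\Ja_{\cat{T}}$ to $(\star)$, or substituting $\mrm{R}\Ga_{\cat{T}}(M)$ or $\mrm{R}\Ja_{\cat{T}}(M)$ for $M$ in $(\star)$, produces distinguished triangles in which each of the morphisms $\mrm{R}\Ga_{\cat{T}}(\si^{\mrm{R}}_{M})$, $\mrm{R}\Ja_{\cat{T}}(\tau^{\mrm{R}}_{M})$, $\si^{\mrm{R}}_{\mrm{R}\Ga_{\cat{T}}(M)}$ and $\tau^{\mrm{R}}_{\mrm{R}\Ja_{\cat{T}}(M)}$ appears as one of the first two arrows. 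Such an arrow is an isomorphism (for all $M$) iff the flanking term in its triangle vanishes identically, and these vanishings are precisely conditions (iii) and (iv).

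Concretely, substituting $\mrm{R}\Ga_{\cat{T}}(M)$ into $(\star)$ shows that (iii) holds iff $\si^{\mrm{R}}_{\mrm{R}\Ga_{\cat{T}}(M)}$ is always an isomorphism; by Proposition~\ref{p:04}(iv) this is equivalent to (i). Applying $\mrm{R}\Ga_{\cat{T}}$ to $(\star)$ shows that (iv) holds iff $\mrm{R}\Ga_{\cat{T}}(\si^{\mrm{R}}_{M})$ is always an isomorphism; by Proposition~\ref{p:04}(iii) this is also equivalent to (i). Finally, condition (ii) unpacks as the simultaneous statement that $\mrm{R}\Ja_{\cat{T}}(\tau^{\mrm{R}}_M)$ and $\tau^{\mrm{R}}_{\mrm{R}\Ja_{\cat{T}}(M)}$ are isomorphisms for every $M$. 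Applying $\mrm{R}\Ja_{\cat{T}}$ to $(\star)$ places $\mrm{R}\Ja_{\cat{T}}(\tau^{\mrm{R}}_M)$ flanked by $\mrm{R}\Ja_{\cat{T}}(\mrm{R}\Ga_{\cat{T}}(M))$ (up to shift), identifying the first requirement with (iii); substituting $\mrm{R}\Ja_{\cat{T}}(M)$ for $M$ in $(\star)$ places $\tau^{\mrm{R}}_{\mrm{R}\Ja_{\cat{T}}(M)}$ flanked by $\mrm{R}\Ga_{\cat{T}}(\mrm{R}\Ja_{\cat{T}}(M))$ (up to shift), identifying the second requirement with (iv). Thus (ii) is equivalent to the conjunction of (iii) and (iv), which by the preceding paragraph is equivalent to (i), closing the cycle.

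I do not anticipate any real difficulty: once $(\star)$ and Proposition~\ref{p:04} are in hand, the proof is essentially a formal exercise in distinguished triangles together with the defining isomorphism/vanishing criteria for idempotent pointed functors. The mildest care is needed to ensure that $\mrm{R}\Ga_{\cat{T}}$ and $\mrm{R}\Ja_{\cat{T}}$ can be iterated on $\cat{D}^{+}(A)$, but this is automatic from their constructions via K-injective resolutions. The finite dimensional addendum requires no modification: it follows by the same argument, with Proposition~\ref{p:04} replaced by its extension to all of $\cat{D}(A)$.
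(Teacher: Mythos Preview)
Your proposal is correct and follows essentially the same approach as the paper: both proofs use the distinguished triangle from Proposition~\ref{p:08}, manipulate it by substituting $\mrm{R}\Ga_{\cat{T}}(M)$ or $\mrm{R}\Ja_{\cat{T}}(M)$ for $M$ and by applying $\mrm{R}\Ga_{\cat{T}}$ or $\mrm{R}\Ja_{\cat{T}}$, and then invoke Proposition~\ref{p:04} to tie the resulting isomorphism/vanishing conditions back to weak stability. The finite dimensional addendum is handled identically in both.
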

\begin{proof}
By Proposition \ref{p:08}, for any $M\in \cat{D}(A)$ there is a distinguished triangle 

\begin{equation} \label{e:12} 
\mrm{R}\Ga_{\cat{T}}(M)\xrightarrow{\si_{M}} M \xrightarrow{\tau_{M}} \mrm{R}\Ja_{\cat{T}}(M)\xrightarrow{\lambda_{M}} \mrm{R}\Ga_{\cat{T}}(M)[1]
\end{equation} 
in $\cat{D}(A)$. In particular, given $M\in \cat{D}(A)$ we can consider the distinguished triangle corresponding to $\mathrm{R}\Ga_{\cat{T}}(M)$,
%\begin{equation} 
\[\mrm{R}\Ga_{\cat{T}}(\mathrm{R}\Ga_{\cat{T}}(M))\xrightarrow{\si^{\mrm{R}}_{\mathrm{R}\Ga_{\cat{T}}(M)}} \mathrm{R}\Ga_{\cat{T}}(M) \xrightarrow{\tau^{\mrm{R}}_{\mathrm{R}\Ga_{\cat{T}}(M)}} \mrm{R}\Ja_{\cat{T}}(\mathrm{R}\Ga_{\cat{T}}(M))\xrightarrow{\alpha} \mrm{R}\Ga_{\cat{T}}(\mathrm{R}\Ga_{\cat{T}}(M))[1],\]
%\end{equation}
where $\alpha = \lambda_{\mathrm{R}\Ga_{\cat{T}}(M)}$, and the distinguished triangle arising from an application of the functor $\mathrm{R}\Ga_{\cat{T}}$ to  \ref{e:12}:
%\begin{equation} 
\[\mrm{R}\Ga_{\cat{T}}(\mathrm{R}\Ga_{\cat{T}}(M))\xrightarrow{\mathrm{R}\Ga_{\cat{T}}(\si^{\mrm{R}}_{M})} \mathrm{R}\Ga_{\cat{T}}(M) \xrightarrow{\mathrm{R}\Ga_{\cat{T}}(\tau^{\mrm{R}}_{M})} \mrm{R}\Ga_{\cat{T}}(\mathrm{R}\Ja_{\cat{T}}(M))\xrightarrow{\alpha'} \mrm{R}\Ga_{\cat{T}}(\mathrm{R}\Ga_{\cat{T}}(M))[1],\]
%\end{equation}
where $\alpha' = \mathrm{R}\Ga_{\cat{T}}(\lambda_{M}).$ By examining the above two distinguished triangles, we see that $\si^{\mrm{R}}_{\mathrm{R}\Ga_{\cat{T}}(M)}$ is an isomorphism if and only if $\mathrm{R}\Ja_{\cat{T}}(\mrm{R}\Ga_{\cat{T}}(M)) = 0$, and that $\mathrm{R}\Ga_{\cat{T}}(\si^{\mrm{R}}_{M})$ is an isomorphism exactly when $\mathrm{R}\Ga_{\cat{T}}(\mrm{R}\Ja_{\cat{T}}(M)) = 0.$ The equivalence of (i), (iii), and (iv) is now a consequence of Proposition \ref{p:02}.

We can also consider the distinguished triangle in \ref{e:12} corresponding to $\mrm{R}\Ja_{\cat{T}}(M)$, or apply the functor $\mrm{R}\Ja_{\cat{T}}$ to the triangle in \ref{e:12}. By doing so, we see that (ii) is equivalent to (iii), and also (iv).

For finite dimensional $\cat{T}$, we use exactly the proof written above, taking into account the last line in the statement of Proposition \ref{p:02}.
\end{proof}

\begin{corollary} \label{c:03}
Let $\cat{T}$ be a weakly stable torsion class in $\cat{M}(A)$. Then, $(\Ja_{\cat{T}},\tau)$ is an idempotent copointed functor on $\cat{M}(A)$.
\end{corollary}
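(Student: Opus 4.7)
The plan is to exploit the derived-level idempotency established in Proposition \ref{p:09} to extract the underived statement. By Lemma \ref{l:07}, the morphisms $\tau_{\Ja_{\cat{T}}(M)}$ and $\Ja_{\cat{T}}(\tau_M)$ coincide, so I only need to verify that $\tau_N : N \to \Ja_{\cat{T}}(N)$ is an isomorphism for every module of the form $N = \Ja_{\cat{T}}(M)$ with $M \in \cat{M}(A)$.

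Fix such $M$, and set $N := \Ja_{\cat{T}}(M)$ and $P := \mrm{R}\Ja_{\cat{T}}(M) \in \cat{D}^{+}(A)$. Since $\Ja_{\cat{T}}$ is a filtered colimit of the left exact functors $\mrm{Hom}_A(\i, -)$, it is itself left exact, so $P$ lies in $\cat{D}^{\geq 0}(A)$ with $\opn{H}^{0}(P) \cong N$. The standard truncation distinguished triangle then reads
\[ N \to P \to \tau^{\geq 1}P \to N[1] \]
in $\cat{D}(A)$, where $\tau^{\geq 1}P$ denotes the truncation of $P$ to cohomological degrees $\geq 1$. Applying $\mrm{R}\Ga_{\cat{T}}$ and invoking Proposition \ref{p:09}(iv), which by weak stability gives $\mrm{R}\Ga_{\cat{T}}(P) = 0$, produces an isomorphism $\mrm{R}\Ga_{\cat{T}}(N) \cong \mrm{R}\Ga_{\cat{T}}(\tau^{\geq 1}P)[-1]$ in $\cat{D}(A)$. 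Since $\Ga_{\cat{T}}$ is left exact, $\mrm{R}\Ga_{\cat{T}}$ preserves each $\cat{D}^{\geq n}$, so $\mrm{R}\Ga_{\cat{T}}(\tau^{\geq 1}P) \in \cat{D}^{\geq 1}(A)$ and hence $\mrm{R}\Ga_{\cat{T}}(N) \in \cat{D}^{\geq 2}(A)$. In particular, both $\Ga_{\cat{T}}(N) = 0$ and $\mrm{R}^{1}\Ga_{\cat{T}}(N) = 0$.

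To close the argument, take $\opn{H}^{0}$ of the distinguished triangle of Proposition \ref{p:08} applied to $N$; since $\Ja_{\cat{T}}$ is left exact, the long exact sequence in cohomology begins
\[ 0 \to \Ga_{\cat{T}}(N) \to N \xrightarrow{\tau_{N}} \Ja_{\cat{T}}(N) \to \mrm{R}^{1}\Ga_{\cat{T}}(N), \]
and the vanishings established above force $\tau_N$ to be an isomorphism, as required. The only delicate point is the $[-1]$ shift supplied by the truncation triangle: this is precisely what promotes the derived-category vanishing $\mrm{R}\Ga_{\cat{T}}(P) = 0$ into the module-level concentration statement needed for $N$.
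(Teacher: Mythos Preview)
Your proof is correct, but it takes a different route from the paper's. The paper works entirely on the $\Ja_{\cat{T}}$-side: it first observes that $\Ja_{\cat{T}}(E)$ is right $\Ja_{\cat{T}}$-acyclic for every injective $E$ (an immediate consequence of Proposition~\ref{p:09}(ii)), so that for an injective resolution $M\to I$ the map $\tau^{\mrm{R}}_{\mrm{R}\Ja_{\cat{T}}(M)}$ is represented by $\tau_{\Ja_{\cat{T}}(I)}:\Ja_{\cat{T}}(I)\to \Ja_{\cat{T}}^{2}(I)$; derived idempotency then says this is a quasi-isomorphism, and applying $\opn{H}^{0}$ together with left exactness of $\Ja_{\cat{T}}$ yields that $\tau_{\Ja_{\cat{T}}(M)}$ is an isomorphism.

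Your argument instead pivots to the $\Ga_{\cat{T}}$-side: you use Proposition~\ref{p:09}(iv), the vanishing $\mrm{R}\Ga_{\cat{T}}(\mrm{R}\Ja_{\cat{T}}(M))=0$, and a truncation triangle to push $\mrm{R}\Ga_{\cat{T}}(\Ja_{\cat{T}}(M))$ into $\cat{D}^{\geq 2}(A)$, then read off the isomorphism from the long exact sequence of Proposition~\ref{p:08}. This avoids the preliminary acyclicity statement for $\Ja_{\cat{T}}(E)$ and is arguably a bit slicker, at the cost of a small detour through $\mrm{R}\Ga_{\cat{T}}$. The paper's version, on the other hand, makes the passage from derived to underived idempotency more transparent: it is literally ``take $\opn{H}^{0}$ of the derived statement''. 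Both arguments ultimately rest on the same equivalence in Proposition~\ref{p:09}, just invoking different parts of it.
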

\begin{proof}
To begin, suppose $E$ is an injective $A$-module. By Proposition \ref{p:09}, $\Ja_{\cat{T}}(E)\cong \mrm{R}\Ja_{\cat{T}}(\Ja_{\cat{T}}(E))$, thus implying that $\Ja_{\cat{T}}(E)$ is right $\Ja_{\cat{T}}$-acyclic. 

Let $M\in \cat{M}(A)$, and let $M\to I$ be an injective resolution of $M$ with $I^{i} =0 $ for $i\leq -1$. $\Ja_{\cat{T}}(I)$ is a bounded below complex of right $\Ja_{\cat{T}}$-acyclic modules, so the map $\tau^{\mrm{R}}_{\mrm{R}\Ja_{\cat{T}}(M)}$ is represented by $\tau_{\Ja_{\cat{T}}(I)}: \Ja_{\cat{T}}(I)\to \Ja_{\cat{T}}(\Ja_{\cat{T}}(I))$. By Proposition \ref{p:09}, this map is a quasi-isomorphism so $\opn{H}^{0}(\tau_{\Ja_{\cat{T}}(I)}):\opn{H}^{0}(\Ja_{\cat{T}}(I))\to \opn{H}^{0}(\Ja_{\cat{T}}(\Ja_{\cat{T}}(I)))$ is an isomorphism. Since $\Ja_{\cat{T}}$ is left exact, we get that $\tau_{\Ja_{\cat{T}}(M)}$ is an isomorphism. 

By Lemma \ref{l:07}, $(\Ja_{\cat{T}},\tau)$ is an idempotent copointed functor on $\cat{M}(A)$.
\end{proof}

\begin{lemma} \label{l:10}
Let $\cat{T}$ be a weakly stable torsion class in $\cat{M}(A)$. The map \[\mathrm{RHom}_{A}(\tau^{\mrm{R}}_{A},\opn{id}_{\mrm{R}\Ja_{\cat{T}}(A)}): \mathrm{RHom}_{A}(\mrm{R}\Ja_{\cat{T}}(A) ,\mrm{R}\Ja_{\cat{T}}(A) ) \to  \mrm{R}\Ja_{\cat{T}}(A) \] is an isomorphism in $\cat{D}(\K)$.
\end{lemma}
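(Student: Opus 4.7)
The plan is to reduce the statement to a vanishing result, then invoke the torsion-theoretic adjunction. Apply the contravariant functor $\mathrm{RHom}_{A}(-, \mrm{R}\Ja_{\cat{T}}(A))$ to the distinguished triangle from Proposition \ref{p:08},
\[\mrm{R}\Ga_{\cat{T}}(A)\xrightarrow{\si^{\mrm{R}}_{A}} A \xrightarrow{\tau^{\mrm{R}}_{A}} \mrm{R}\Ja_{\cat{T}}(A)\xrightarrow{\lambda_{A}} \mrm{R}\Ga_{\cat{T}}(A)[1],\]
producing a distinguished triangle in $\cat{D}(\K)$ of the form
\[\mathrm{RHom}_{A}(\mrm{R}\Ga_{\cat{T}}(A),\mrm{R}\Ja_{\cat{T}}(A))[-1] \to \mathrm{RHom}_{A}(\mrm{R}\Ja_{\cat{T}}(A),\mrm{R}\Ja_{\cat{T}}(A)) \xrightarrow{\mathrm{RHom}_{A}(\tau^{\mrm{R}}_{A},\opn{id})} \mathrm{RHom}_{A}(A,\mrm{R}\Ja_{\cat{T}}(A)).\]
Using the canonical isomorphism $\mathrm{RHom}_{A}(A,\mrm{R}\Ja_{\cat{T}}(A)) \cong \mrm{R}\Ja_{\cat{T}}(A)$, the map on the right becomes (up to this identification) exactly the map of the lemma. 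Thus it suffices to prove that the complex $\mathrm{RHom}_{A}(\mrm{R}\Ga_{\cat{T}}(A), \mrm{R}\Ja_{\cat{T}}(A))$ is zero in $\cat{D}(\K)$.

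Concretely, I need to show $\mrm{Hom}_{\cat{D}(A)}(\mrm{R}\Ga_{\cat{T}}(A), \mrm{R}\Ja_{\cat{T}}(A)[i]) = 0$ for every $i\in \Z$. Since $A$ is concentrated in degree zero, the complexes $\mrm{R}\Ga_{\cat{T}}(A)$ and $\mrm{R}\Ja_{\cat{T}}(A)[i]$ all lie in $\cat{D}^{+}(A)$, so we may work inside this subcategory and invoke Proposition \ref{p:09}. By Proposition \ref{p:09}(ii), $(\mrm{R}\Ja_{\cat{T}}, \tau^{\mrm{R}})$ is an idempotent pointed functor on $\cat{D}^{+}(A)$; since $\tau^{\mrm{R}}$ is a morphism of triangulated functors, $\tau^{\mrm{R}}_{\mrm{R}\Ja_{\cat{T}}(A)[i]}$ is an isomorphism, so $\mrm{R}\Ja_{\cat{T}}(A)[i]$ lies in $\cat{D}^{+}(A)_{\mrm{R}\Ja_{\cat{T}}}$. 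The triangulated analogue of Lemma \ref{l:09} then yields
\[\mrm{Hom}_{\cat{D}(A)}\bigl(\mrm{R}\Ga_{\cat{T}}(A),\, \mrm{R}\Ja_{\cat{T}}(A)[i]\bigr) \cong \mrm{Hom}_{\cat{D}(A)}\bigl(\mrm{R}\Ja_{\cat{T}}(\mrm{R}\Ga_{\cat{T}}(A)),\, \mrm{R}\Ja_{\cat{T}}(A)[i]\bigr),\]
and the right hand side is zero because $\mrm{R}\Ja_{\cat{T}}(\mrm{R}\Ga_{\cat{T}}(A)) = 0$ by Proposition \ref{p:09}(iii).

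The main obstacle I anticipate is legitimately invoking Lemma \ref{l:09} in the triangulated setting $\cat{D}^{+}(A)$: the lemma is stated for ordinary categories, but its proof (being purely formal Yoneda-style manipulation of the unit and the idempotence isomorphisms) carries over verbatim once $(\mrm{R}\Ja_{\cat{T}}, \tau^{\mrm{R}})$ is known to be idempotent. If one prefers to avoid even this appeal, an alternative is to apply $\mrm{Hom}_{\cat{D}(A)}(\mrm{R}\Ga_{\cat{T}}(A),-)$ directly to the triangle of Proposition \ref{p:08} applied to $\mrm{R}\Ja_{\cat{T}}(A)[i]$: by Proposition \ref{p:09}(iv) we have $\mrm{R}\Ga_{\cat{T}}(\mrm{R}\Ja_{\cat{T}}(A)[i]) = 0$, giving an isomorphism $\mrm{R}\Ja_{\cat{T}}(A)[i] \iso \mrm{R}\Ja_{\cat{T}}(\mrm{R}\Ja_{\cat{T}}(A)[i])$, which combined with the unit-based natural transformation $\mrm{Hom}(\mrm{R}\Ga_{\cat{T}}(A),-) \to \mrm{Hom}(\mrm{R}\Ga_{\cat{T}}(A), \mrm{R}\Ja_{\cat{T}}(-))$ and Proposition \ref{p:09}(iii) yields the same vanishing. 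Either route leads to the conclusion.
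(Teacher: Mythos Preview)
Your proof is correct and follows essentially the same route as the paper: reduce via the distinguished triangle of Proposition~\ref{p:08} to the vanishing of $\mathrm{RHom}_{A}(\mrm{R}\Ga_{\cat{T}}(A),\mrm{R}\Ja_{\cat{T}}(A))$, then combine idempotence with the vanishing from Proposition~\ref{p:09}. The only cosmetic difference is that the paper invokes the copointed adjunction (Lemma~\ref{l:08} for $\mrm{R}\Ga_{\cat{T}}$ together with $\mrm{R}\Ga_{\cat{T}}(\mrm{R}\Ja_{\cat{T}}(A))=0$), whereas you use the dual pointed adjunction (Lemma~\ref{l:09} for $\mrm{R}\Ja_{\cat{T}}$ together with $\mrm{R}\Ja_{\cat{T}}(\mrm{R}\Ga_{\cat{T}}(A))=0$); your alternative at the end is exactly the paper's version.
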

\begin{proof}
Apply the triangulated functor $\mathrm{RHom}_{A}(-,\mathrm{R}\Ga_{\cat{T}}(A))$ to the distinguished triangle \ref{e:12} with $M=A$. It follows that $\mathrm{RHom}_{A}(\tau^{\mrm{R}}_{A},\opn{id}_{\mrm{R}\Ja_{\cat{T}}(A)})$ is an isomorphism if and only if \[\mathrm{RHom}_{A}(\mathrm{R}\Ga_{\cat{T}}(A), \mathrm{R}\Ja_{\cat{T}}(A)) = 0.\]

From Proposition \ref{p:04}, $(\mathrm{R}\Ga_{\cat{T}},\si^{\mrm{R}})$ is an idempotent copointed functor on $\cat{D}^{+}(A)$. Both $\mathrm{R}\Ja_{\cat{T}}(A)$ and $\mrm{R}\Ga_{\cat{T}}(A)$ belong to $\cat{D}^{+}(A)$.

 Thus, by using Lemma \ref{l:08} and considering cohomology, we see that the canonical map \[\mathrm{RHom}_{A}(\mathrm{R}\Ga_{\cat{T}}(A), \mathrm{R}\Ga_{\cat{T}}(\mathrm{R}\Ja_{\cat{T}}(A)))\to \mathrm{RHom}_{A}(\mathrm{R}\Ga_{\cat{T}}(A), \mathrm{R}\Ja_{\cat{T}}(A))\] induced by $\si^{\mrm{R}}_{\mathrm{R}\Ja_{\cat{T}}(A))}$ is an isomorphism in $\cat{D}(\K)$. Finally, Proposition \ref{p:09}  tells us that  $\mathrm{R}\Ga_{\cat{T}}(\mathrm{R}\Ja_{\cat{T}}(A))=0$.
\end{proof}

\begin{convention}
If a ring $A$ is flat as a module over $\K$, we say that $A$ is a flat ring.
\end{convention}

Suppose $A$ is a flat ring. Let $\opn{rest}: \cat{D}(A^{\mrm{en}}) \to \cat{D}(A)$ denote the restriction functor corresponding to the ring homomorphism $A\to A^{\mrm{en}}$ sending $a$ to $a\ot 1$. If $\cat{T}$ is a torsion class in $\cat{M}(A)$, there is a functor $\mathrm{R}\Ga_{\cat{T}}: \cat{D}(A^{\mrm{en}})\to \cat{D}(A^{\mrm{en}})$ such that $\mrm{R}\Ga_{\cat{T}}\circ \opn{rest} \cong \opn{rest}\circ \mrm{R}\Ga_{\cat{T}}$ (this justifies the abuse of notation). In other words, given a complex $M$ of $A^{\mrm{en}}$-modules, $\mrm{R}\Ga_{\cat{T}}(M)$ carries the natural structure of a complex of $A^{\mrm{en}}$-modules, and the morphism $\si^{\mrm{R}}_{M}$ is a morphism in $\cat{D}(A^{\mrm{en}})$. See \cite[Lemma 7.9]{VY} for details. Similar considerations apply to the functor $\mrm{R}\Ja_{\cat{T}}$ and $\tau^{\mrm{R}}$; in particular, $\mrm{R}\Ja_{\cat{T}}(A)$ is naturally an object, and $\tau^{\mrm{R}}_{A}$ a morphism, in $\cat{D}(A^{\mrm{en}})$.

\begin{lemma} \label{l:11}
Let $A$ be a flat ring, and let $\cat{T}$ be a weakly stable torsion class in $\cat{M}(A)$. The map \[\mathrm{RHom}_{A}(\tau^{\mrm{R}}_{A},\opn{id}_{\mrm{R}\Ja_{\cat{T}}(A)}): \mathrm{RHom}_{A}(\mrm{R}\Ja_{\cat{T}}(A) ,\mrm{R}\Ja_{\cat{T}}(A) ) \to  \mrm{R}\Ja_{\cat{T}}(A) \] is an isomorphism in $\cat{D}(A^{\mrm{en}})$.
\end{lemma}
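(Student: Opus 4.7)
The plan is to reduce the statement to Lemma \ref{l:10} by exploiting the conservativity of the restriction functor $\cat{D}(A^{\mrm{en}}) \to \cat{D}(\K)$. Since being an isomorphism in any derived category is detected by the underlying cohomology, it will suffice to verify that the displayed map genuinely lifts to a morphism in $\cat{D}(A^{\mrm{en}})$ whose restriction to $\cat{D}(\K)$ is the map already shown to be an isomorphism in Lemma \ref{l:10}. All the real content is in the bookkeeping for this lift.

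First I would check that every object and arrow in the statement is naturally defined in $\cat{D}(A^{\mrm{en}})$. As noted in the discussion immediately preceding the lemma, the flatness of $A$ over $\K$ makes $A^{\mrm{en}}$ the ``correct'' derived enveloping algebra (cf.\ Remark \ref{r:01}), and ensures that the functors $\mrm{R}\Ga_{\cat{T}}$ and $\mrm{R}\Ja_{\cat{T}}$, together with the natural transformations $\si^{\mrm{R}}$ and $\tau^{\mrm{R}}$, all lift to $\cat{D}(A^{\mrm{en}})$ compatibly with restriction to $\cat{D}(A)$ and $\cat{D}(\K)$, by \cite[Lemma 7.9]{VY}. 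Correspondingly, $\mathrm{RHom}_A(-,-)$ lifts to a bifunctor $\cat{D}(A^{\mrm{en}})^{\mrm{op}} \times \cat{D}(A^{\mrm{en}}) \to \cat{D}(A^{\mrm{en}})$: given two $A^{\mrm{en}}$-modules $M, N$, one equips $\mathrm{Hom}_{A}(M,N)$ with its outer $(A,A)$-bimodule structure from the right $A$-action on $M$ (giving the left $A$-action) and the right $A$-action on $N$ (giving the right $A$-action), and derives using either a K-projective resolution of $M$ or a K-injective resolution of $N$ in $\cat{D}(A^{\mrm{en}})$. Under this lift the canonical evaluation $\mathrm{RHom}_A(A, N)\cong N$ is an isomorphism in $\cat{D}(A^{\mrm{en}})$, so $\mathrm{RHom}_A(\tau^{\mrm{R}}_A, \opn{id}_{\mrm{R}\Ja_{\cat{T}}(A)})$ is indeed a well-defined morphism in $\cat{D}(A^{\mrm{en}})$ whose image under restriction to $\cat{D}(\K)$ is exactly the morphism considered in Lemma \ref{l:10}.

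Having set this up, I would conclude by observing that the forgetful functor $\opn{rest}: \cat{D}(A^{\mrm{en}}) \to \cat{D}(\K)$ is conservative: a morphism in $\cat{D}(A^{\mrm{en}})$ is an isomorphism iff its underlying map of complexes of $\K$-modules is a quasi-isomorphism, since this is a condition on cohomology computed on the same underlying cochain complex. Combining conservativity with Lemma \ref{l:10} yields the claim. The main obstacle here is purely the bookkeeping in the preceding paragraph, ensuring that the bimodule-enhanced versions of $\mrm{R}\Ga_{\cat{T}}$, $\mrm{R}\Ja_{\cat{T}}$, $\tau^{\mrm{R}}$ and $\mathrm{RHom}_A$ are genuinely compatible with restriction; once this is in place, no new homological argument is required beyond what was done in Lemma \ref{l:10}.
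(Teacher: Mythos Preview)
Your proposal is correct and matches the paper's approach: the paper's proof simply notes that the map lifts to $\cat{D}(A^{\mrm{en}})$ and then says ``the proof is now identical to Lemma \ref{l:10}.'' Your use of conservativity of $\cat{D}(A^{\mrm{en}}) \to \cat{D}(\K)$ is just a clean way of packaging ``identical proof,'' and your careful bookkeeping on the bimodule enhancements of $\mrm{R}\Ja_{\cat{T}}$, $\tau^{\mrm{R}}$, and $\mathrm{RHom}_A$ is exactly the content the paper leaves implicit.
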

\begin{proof}
In this situation, the map  \[\mathrm{RHom}_{A}(\tau^{\mrm{R}}_{A},\opn{id}_{\mrm{R}\Ja_{\cat{T}}(A)}): \mathrm{RHom}_{A}(\mrm{R}\Ja_{\cat{T}}(A) ,\mrm{R}\Ja_{\cat{T}}(A) ) \to  \mrm{R}\Ja_{\cat{T}}(A) \] is a morphism in $\cat{D}(A^{\mrm{en}})$. The proof is now identical to Lemma \ref{l:10}.
\end{proof}

\begin{theorem} \label{p:10} \label{t:03}
Let $A$ be a flat ring, and let $\cat{T}$ be a weakly stable torsion class in $\cat{M}(A)$. Then, there exists a dg ring $A_{\cat{T}}$, a morphism of dg rings $\phi_{\cat{T}}: A\to A_{\cat{T}}$, and a morphism $\psi_{\cat{T}}: A_{\cat{T}}\to \mathrm{R}\Ga_{\cat{T}}(A)[1]$ in $\cat{D}(A^{\mrm{en}})$ such that the triangle \[\mathrm{R}\Ga_{\cat{T}}(A)\xrightarrow{\si_{A}^{\mrm{R}}} A\xrightarrow{\phi_{\cat{T}}} A_{\cat{T}}\xrightarrow{\psi_{\cat{T}}} \mathrm{R}\Ga_{\cat{T}}(A)[1]\] is distinguished in $\cat{D}(A^{\mrm{en}})$.
\end{theorem}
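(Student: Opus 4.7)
Applying Proposition \ref{p:08} with $M = A$, regarded as an object of $\cat{D}(A^{\mrm{en}})$ via its canonical bimodule structure, produces a distinguished triangle
\[
\mrm{R}\Ga_{\cat{T}}(A) \xrightarrow{\si^{\mrm{R}}_A} A \xrightarrow{\tau^{\mrm{R}}_A} \mrm{R}\Ja_{\cat{T}}(A) \xrightarrow{\la_A} \mrm{R}\Ga_{\cat{T}}(A)[1]
\]
in $\cat{D}(A^{\mrm{en}})$; this is legitimate because, as noted in the paragraph preceding Lemma \ref{l:11}, the functors $\mrm{R}\Ga_{\cat{T}}$ and $\mrm{R}\Ja_{\cat{T}}$ together with $\si^{\mrm{R}}$, $\tau^{\mrm{R}}$, and $\la$ all respect the $A^{\mrm{en}}$-structure. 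The natural candidate for $A_{\cat{T}}$ as an object of $\cat{D}(A^{\mrm{en}})$ is therefore $\mrm{R}\Ja_{\cat{T}}(A)$, with $\phi_{\cat{T}}$ representing $\tau^{\mrm{R}}_A$ and $\psi_{\cat{T}}$ representing $\la_A$. The substantive point is to upgrade $\mrm{R}\Ja_{\cat{T}}(A)$ to an actual dg $\K$-ring and to realise $\tau^{\mrm{R}}_A$ by a bona fide dg ring homomorphism.

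The plan is to take $A_{\cat{T}}$ to be the endomorphism dg ring of a suitable K-injective model of $\mrm{R}\Ja_{\cat{T}}(A)$. First, pick a K-injective resolution $A \to I$ in $\cat{C}(A^{\mrm{en}})$. Since $A$ (and hence $A^{\mrm{op}}$) is flat over $\K$, the enveloping algebra $A^{\mrm{en}}$ is flat as a right $A$-module: the functor $- \ot_A A^{\mrm{en}}$ reduces to $- \ot_{\K} A^{\mrm{op}}$. Hence the forgetful functor $\cat{C}(A^{\mrm{en}}) \to \cat{C}(A)$ is right adjoint to an exact functor and so preserves K-injectives; in particular $I$ is K-injective as a complex of left $A$-modules. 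Applying $\Ja_{\cat{T}}$ termwise then yields a complex $\Ja_{\cat{T}}(I) \in \cat{C}(A^{\mrm{en}})$ that represents $\mrm{R}\Ja_{\cat{T}}(A)$ in $\cat{D}(A^{\mrm{en}})$. Next, take a K-injective resolution $\Ja_{\cat{T}}(I) \to J$ in $\cat{C}(A^{\mrm{en}})$, which by the same argument is K-injective over $A$ on the left. Define $A_{\cat{T}}$ to be the dg $\K$-ring whose underlying complex is $\mrm{Hom}_A(J, J)$, with composition as multiplication, the order or sign convention chosen so that the natural map $\phi_{\cat{T}} \colon A \to A_{\cat{T}}$ induced by the right $A$-action on the bimodule $J$ (which commutes with the left $A$-action and so lands in $\mrm{Hom}_A(J, J)$) becomes a dg ring homomorphism.

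Since $J$ is K-injective over $A$, the underlying complex of $A_{\cat{T}}$ computes $\mrm{RHom}_A\bigl(\mrm{R}\Ja_{\cat{T}}(A), \mrm{R}\Ja_{\cat{T}}(A)\bigr)$ in $\cat{D}(A^{\mrm{en}})$, and Lemma \ref{l:11} then supplies an isomorphism $A_{\cat{T}} \iso \mrm{R}\Ja_{\cat{T}}(A)$ in $\cat{D}(A^{\mrm{en}})$. A direct diagram chase identifies $\phi_{\cat{T}}$ with $\tau^{\mrm{R}}_A$ under this isomorphism. Taking $\psi_{\cat{T}}$ to be the transport of $\la_A$ along this isomorphism, the distinguished triangle of the theorem coincides with the one extracted above from Proposition \ref{p:08}. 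The main obstacle is the bookkeeping around module and composition conventions: in particular, verifying that the $A^{\mrm{en}}$-structure on $\mrm{Hom}_A(J, J)$ coming from the bimodule structures on source and target matches, under the isomorphism supplied by Lemma \ref{l:11}, the canonical $A^{\mrm{en}}$-structure on $\mrm{R}\Ja_{\cat{T}}(A)$, and that the chosen composition convention really does produce a genuine dg ring homomorphism out of $A$ rather than an anti-homomorphism. Everything else is formal, using Proposition \ref{p:08}, Lemma \ref{l:11}, and standard K-injective resolution machinery.
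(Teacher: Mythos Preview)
Your proposal is correct and follows essentially the same route as the paper: start from the distinguished triangle of Proposition \ref{p:08} in $\cat{D}(A^{\mrm{en}})$, take $A_{\cat{T}}$ to be the endomorphism dg ring of a K-injective $A^{\mrm{en}}$-model of $\mrm{R}\Ja_{\cat{T}}(A)$, and invoke Lemma \ref{l:11} to identify this with $\mrm{R}\Ja_{\cat{T}}(A)$ compatibly with $\tau^{\mrm{R}}_A$. The paper is only more explicit on two points you flag as bookkeeping: it fixes the convention by setting $A_{\cat{T}} = \mrm{Hom}_A(I,I)^{\mrm{op}}$ with $\phi_{\cat{T}}(a)(x)=xa$, and it writes out the commutative rectangle (through $\mrm{Hom}_A(\Ja_{\cat{T}}(J),I)$ and $\mrm{Hom}_A(J,I)$) that realises your ``direct diagram chase'' identifying $\phi_{\cat{T}}$ with $\tau^{\mrm{R}}_A$ via the map of Lemma \ref{l:11}.
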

\begin{proof}
 First, consider the distinguished triangle \[\mrm{R}\Ga_{\cat{T}}(A)\xrightarrow{\si^{\mrm{R}}_{A}} A \xrightarrow{\tau^{\mrm{R}}_{A}} \mrm{R}\Ja_{\cat{T}}(A)\xrightarrow{\lambda_{A}} \mrm{R}\Ga_{\cat{T}}(A)[1],\] viewed as a triangle in $\cat{D}(A^{\mrm{en}})$. By Lemma \ref{l:11}, the map \[\mathrm{RHom}_{A}(\tau^{\mrm{R}}_{A},\opn{id}_{\mrm{R}\Ja_{\cat{T}}(A)}): \mathrm{RHom}_{A}(\mrm{R}\Ja_{\cat{T}}(A) ,\mrm{R}\Ja_{\cat{T}}(A) ) \to  \mrm{R}\Ja_{\cat{T}}(A) \] is an isomorphism in $\cat{D}(A^{\mrm{en}})$.
 
 Let $A\xrightarrow{\mu} J$ and $\Ja_{\cat{T}}(J)\xrightarrow{\nu} I$ be K-injective resolutions of $A$ and $\Ja_{\cat{T}}(J)$, respectively, in $\cat{K}(A^{\mrm{en}})$. The map $J\xrightarrow{\tau_{J}} \Ja_{\cat{T}}(J)$ is a morphism of complexes of $A^{\mrm{en}}$-modules. 

$\opn{Hom}_{A}(I,I)$ has a canonical dg ring structure. It is easy to check that the map 
\begin{equation} \label{e:21}
\phi: A\to \mathrm{Hom}_{A}(I,I)^{\mrm{op}},
\end{equation}
where $\phi(a)(x):= xa$, is a morphism of dg rings. Morever, the $A^{\opn{en}}$-module structure induced on $\mathrm{Hom}_{A}(I,I)$ via this dg ring homomorphism is precisely the bimodule structure it inherits from the bimodule structure of $I$. 

Consider the following diagram:
\begin{displaymath}
\xymatrix{
A \ar[rr]^{\phi} \ar[d]_{\mu} & & \mathrm{Hom}_{A}(I,I) \ar[d]^{\mrm{Hom}_{A}(\nu , \opn{id}_{I})} \ar@{-->}[lldd]_{\xi} \\
J \ar[d]_{\tau_{J}} & & \mathrm{Hom}_{A}(\Ja_{\cat{T}}(J),I) \ar[d]^{\mrm{Hom}_{A}(\tau_{J} , \opn{id}_{I})} \\
\Ja_{\cat{T}}(J) \ar[d]_{\nu}& & \mathrm{Hom}_{A}(J,I) \ar[d]^{\mrm{Hom}_{A}(\mu , \opn{id}_{I})} \\
I\ar@{-}[rr]^{\cong} & & \mathrm{Hom}_{A}(A,I).
}
\end{displaymath}

Since $\mu$, $\nu$, and $\tau_{J}$ are morphisms of bimodules, the outer rectangle commutes. Since $\nu$ is an isomorphism in $\cat{D}(A^{\mrm{en}})$, we can construct a map $\xi: \mathrm{Hom}_{A}(I,I)\to \Ja_{\cat{T}}(J)$ in $\cat{D}(A^{\mrm{en}})$ making the full diagram commutative. By a judicious choice of resolutions, there is no loss of generality in assuming that $\Ja_{\cat{T}}(J) = \mrm{R}\Ja_{\cat{T}}(A)$, that $\tau_{J}\circ \mu = \tau^{\mrm{R}}_{A}$, and that $\xi$ represents the map $\mathrm{RHom}_{A}(\tau^{\mrm{R}}_{A},\opn{id}_{\mrm{R}\Ja_{\cat{T}}(A)})$. By Lemma \ref{l:11}, $\xi$ is an isomorphism in $\cat{D}(A^{\mrm{en}})$.

This allows us to construct the following diagram in $\cat{D}(A^{\mrm{en}})$:

\begin{displaymath}
\xymatrix{
\mathrm{R}\Ga_{\cat{T}}(A) \ar[d]^{\opn{id}_{\mathrm{R}\Ga_{\cat{T}}(A)}} \ar[rr]^{\si^{\mrm{R}}_{A}} & & A \ar[d]^{\opn{id}_{A}} \ar[rr]^{\phi} &  & \mathrm{Hom}_{A}(I,I) \ar[rr]^{\lambda_{A}\circ \xi} \ar[d]^{\xi} & & \mathrm{R}\Ga_{\cat{T}}(A)[1] \ar[d]^{\opn{id}_{\mathrm{R}\Ga_{\cat{T}}(A)}[1]} \\
\mathrm{R}\Ga_{\cat{T}}(A)   \ar[rr]_{\si^{\mrm{R}}_{A}} & & A \ar[rr]_{\tau^{\mrm{R}}_{A}}  & & \mrm{R}\Ja_{\cat{T}}(A) \ar[rr]_{\lambda_{A}} & & \mathrm{R}\Ga_{\cat{T}}(A)[1] }
\end{displaymath}

Each of the squares in this diagram commutes. Since the bottom row is a distinguished triangle and all the vertical morphisms are isomorphisms, the top row is a distinguished triangle as well. This proves the proposition: take $A_{\cat{T}} = \mathrm{Hom}_{A}(I,I)^{\mrm{op}}$, $\phi_{\cat{T}} = \phi$, and $\psi_{\cat{T}} = \lambda_{A}\circ \xi$.
\end{proof}

\begin{corollary} \label{c:05}
Let $A$ be a flat ring, and let $\cat{T}$ be a weakly stable torsion class in $\cat{M}(A)$. Then, there is an isomorphism  $\xi_{\cat{T}}: A_{\cat{T}} \to \mrm{R}\Ja_{\cat{T}}(A)$ in $\cat{D}(A^{\mrm{en}})$ such that the following diagram commutes:
\begin{displaymath}
\xymatrix{
A \ar[d]_{\phi_{\cat{T}}} \ar[dr]^{\tau^{\mrm{R}}_{A}} &  \\
A_{\cat{T}} \ar[r]_(0.4){\xi_{\cat{T}}} & \mrm{R}\Ja_{\cat{T}}(A).
}
\end{displaymath}
\end{corollary}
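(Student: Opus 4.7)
The plan is to extract the isomorphism $\xi_{\cat{T}}$ directly from the proof of Theorem \ref{t:03}. That proof explicitly constructs K-injective resolutions $A \xrightarrow{\mu} J$ in $\cat{K}(A^{\mrm{en}})$ and $\Ja_{\cat{T}}(J) \xrightarrow{\nu} I$ in $\cat{K}(A^{\mrm{en}})$, along with a morphism $\xi: \mathrm{Hom}_A(I,I) \to \Ja_{\cat{T}}(J)$ in $\cat{D}(A^{\mrm{en}})$, chosen so that under the identifications $A_{\cat{T}} = \mathrm{Hom}_A(I,I)^{\mrm{op}}$ (same underlying complex) and $\Ja_{\cat{T}}(J) = \mrm{R}\Ja_{\cat{T}}(A)$, the morphism $\xi$ represents $\mathrm{RHom}_A(\tau^{\mrm{R}}_A, \opn{id}_{\mrm{R}\Ja_{\cat{T}}(A)})$. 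I would simply set $\xi_{\cat{T}} := \xi$.

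The isomorphism property of $\xi_{\cat{T}}$ is then immediate from Lemma \ref{l:11}, which asserts precisely that $\mathrm{RHom}_A(\tau^{\mrm{R}}_A, \opn{id}_{\mrm{R}\Ja_{\cat{T}}(A)})$ is an isomorphism in $\cat{D}(A^{\mrm{en}})$. For the commutativity of the triangle, I would unpack the outer rectangle of the diagram constructed in the proof of Theorem \ref{t:03}: that rectangle commutes strictly because $\mu$, $\tau_J$, and $\nu$ are all $A^{\mrm{en}}$-linear chain maps. Going down the left column produces the composite $\nu \circ \tau_J \circ \mu$, which by construction represents $\tau^{\mrm{R}}_A$ in $\cat{D}(A^{\mrm{en}})$; going across the top via $\phi_{\cat{T}} = \phi$ and then down the right column gives the representative of $\xi_{\cat{T}} \circ \phi_{\cat{T}}$. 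The equality of these two composites is exactly the desired identity $\xi_{\cat{T}} \circ \phi_{\cat{T}} = \tau^{\mrm{R}}_A$.

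There is no real obstacle here; the corollary is essentially a repackaging of data already assembled in the proof of Theorem \ref{t:03}. The only thing requiring a bit of care is keeping the identifications straight, namely that $A_{\cat{T}}$ and $\mathrm{Hom}_A(I,I)$ share the same underlying complex (the opposite only affects the multiplication), and that the K-injective resolutions were chosen so that $\tau_J \circ \mu$ serves as a chain-level model for $\tau^{\mrm{R}}_A$. Once these identifications are in force, the corollary follows without further computation.
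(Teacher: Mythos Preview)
Your proposal is correct and matches the paper's approach: the paper's proof is literally ``This follows from the proof of Theorem \ref{t:03}'', and you have accurately unpacked how, taking $\xi_{\cat{T}} = \xi$ and reading off the commutativity $\xi \circ \phi = \tau^{\mrm{R}}_{A}$ from the commutative diagrams constructed there (most directly, from the middle square of the second large diagram in that proof). One small point of precision: in the paper's identifications it is $\tau_{J} \circ \mu$ (with target $\Ja_{\cat{T}}(J) = \mrm{R}\Ja_{\cat{T}}(A)$) that represents $\tau^{\mrm{R}}_{A}$, not $\nu \circ \tau_{J} \circ \mu$; since $\nu$ is an isomorphism in $\cat{D}(A^{\mrm{en}})$ this is harmless, but it is worth keeping the targets straight.
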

\begin{proof}
This follows from the proof of Theorem \ref{t:03}.
\end{proof}

Using Corollary \ref{c:05} and Corollary \ref{c:03}, we see that the ring $\opn{H}^{0}(A_{\cat{T}})$ is the classical localisation of $\cat{A}$ at $\cat{T}$ as constructed by Gabriel (\cite[Chapter IX]{Ste}).

\begin{remark}
The dg ring structure on $A_{\cat{T}}$ is independent of the choice of K-injective resolution of $\Ja_{\cat{T}}(J)$. More precisely, let $I$ be as in the proof of Theorem \ref{t:03}, and suppose $I'$ is another K-injective resolution of $\Ja_{\cat{T}}(J)$ in $\cat{K}(A^{\mrm{en}})$, with $\phi': A\to \mrm{Hom}_{A}(I',I')^{\mrm{op}}$ the dg ring morphism analogous to that in \ref{e:21}. Then, by using an argument very similar to that of \cite[Proposition 2.3]{PSY2}, it is possible to show that  there is a dg ring $\tilde{A}$, a morphism of dg rings $A\to \tilde{A}^{\mrm{op}}$, and dg ring quasi-isomorphisms $\tilde{A}\to A_{\cat{T}}$ and $\tilde{A}\to \mathrm{Hom}_{A}(I',I')$ such that the following diagram commutes: 
\begin{displaymath}
\xymatrix{
 & A \ar[dl]_{\phi_{\cat{T}}} \ar[d] \ar[dr]^{\phi'} & \\
A_{\cat{T}}= \mathrm{Hom}_{A}(I,I)^{\mrm{op}} & \tilde{A}^{\mrm{op}} \ar[l] \ar[r] & \mathrm{Hom}_{A}(I',I')^{\mrm{op}}. 
}
\end{displaymath}
\end{remark}

\begin{theorem} \label{p:11} 
Let $A$ be a flat ring, and $\cat{T}$ a quasi-compact, finite dimensional, weakly stable torsion class in $\cat{M}(A)$. Then, there is an isomorphism  \[\delta: A_{\cat{T}}\ot_{A}^{\mrm{L}}(-)\to \mathrm{R}\Ja_{\cat{T}}\] of triangulated functors from $\cat{D}(A)$ to itself, such that for any $M\in \cat{D}(A)$, $\delta_{M}\circ (\phi_{\cat{T}}\ot_{A}^{\mrm{L}} \opn{id}_{M}) = \tau^{\mrm{R}}_{M}$.
\end{theorem}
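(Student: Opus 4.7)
My strategy is to mimic the approach used in Lemma~\ref{l:03}, this time starting from the distinguished triangle constructed in Theorem~\ref{t:03}. First I would apply the triangulated functor $(-) \ot_{A}^{\mrm{L}} M : \cat{D}(A^{\mrm{en}}) \to \cat{D}(A)$ to the triangle of Theorem~\ref{t:03}, producing a distinguished triangle
$$\mrm{R}\Ga_{\cat{T}}(A) \ot_{A}^{\mrm{L}} M \to M \xrightarrow{\phi_{\cat{T}} \ot \opn{id}_{M}} A_{\cat{T}} \ot_{A}^{\mrm{L}} M \to \mrm{R}\Ga_{\cat{T}}(A)[1] \ot_{A}^{\mrm{L}} M$$
in $\cat{D}(A)$, after using $A \ot_{A}^{\mrm{L}} M \cong M$. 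Under the assumption that $\cat{T}$ is quasi-compact, finite dimensional, and weakly stable, there is a natural isomorphism $\mrm{R}\Ga_{\cat{T}}(A) \ot_{A}^{\mrm{L}} M \cong \mrm{R}\Ga_{\cat{T}}(M)$ of triangulated functors on $\cat{D}(A)$, coming from the smashing-colocalisation properties of $\mrm{R}\Ga_{\cat{T}}$ developed in \cite{VY}. This identifies the leftmost term with the leftmost term of the triangle in Proposition~\ref{p:08}.

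The next step is to construct $\delta_{M}$. As in Lemma~\ref{l:03}, I would run a vanishing argument: using the isomorphism above together with Lemma~\ref{l:08} and the equality $\mrm{R}\Ga_{\cat{T}}(\mrm{R}\Ja_{\cat{T}}(M)) = 0$ provided by Proposition~\ref{p:09}, one obtains, for every $i\in \Z$,
$$\mrm{Hom}_{\cat{D}(A)}(\mrm{R}\Ga_{\cat{T}}(A) \ot_{A}^{\mrm{L}} M,\, \mrm{R}\Ja_{\cat{T}}(M)[i]) \cong \mrm{Hom}_{\cat{D}(A)}(\mrm{R}\Ga_{\cat{T}}(M),\, \mrm{R}\Ga_{\cat{T}}(\mrm{R}\Ja_{\cat{T}}(M))[i]) = 0.$$
Applying the cohomological functor $\mrm{Hom}_{\cat{D}(A)}(-,\, \mrm{R}\Ja_{\cat{T}}(M)[i])$ to the triangle from the first step and extracting the long exact sequence produces a unique morphism $\delta_{M} : A_{\cat{T}} \ot_{A}^{\mrm{L}} M \to \mrm{R}\Ja_{\cat{T}}(M)$ with the prescribed compatibility $\delta_{M} \circ (\phi_{\cat{T}} \ot \opn{id}_{M}) = \tau^{\mrm{R}}_{M}$, exactly paralleling the construction of the map $\nu$ in Lemma~\ref{l:03}.

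To show that $\delta_{M}$ is an isomorphism, I would fit it into a morphism of distinguished triangles whose other two vertical arrows are $\opn{id}_{M}$ and the natural isomorphism $\mrm{R}\Ga_{\cat{T}}(A) \ot_{A}^{\mrm{L}} M \xrightarrow{\cong} \mrm{R}\Ga_{\cat{T}}(M)$; the triangulated five lemma then forces $\delta_{M}$ to be an isomorphism. Naturality of $\delta$ in $M$ and the verification that it is a morphism of triangulated functors follow by the same uniqueness trick that concludes Lemma~\ref{l:03}: any two candidate morphisms that agree after precomposition with $\phi_{\cat{T}} \ot \opn{id}_{M}$ must coincide, and this reduces the functoriality and triangulated compatibility to identities that can be tested against $\phi_{\cat{T}} \ot \opn{id}_{M}$.

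The main obstacle I expect is correctly establishing and invoking the natural isomorphism $\mrm{R}\Ga_{\cat{T}}(A) \ot_{A}^{\mrm{L}} M \cong \mrm{R}\Ga_{\cat{T}}(M)$ as a bifunctorial statement that moreover intertwines the canonical maps to $M$, so that the comparison square really commutes; this is precisely where all three hypotheses on $\cat{T}$ enter in an essential way (quasi-compactness to reduce to $M = A$ by direct-sum arguments, finite dimensionality to extend from $\cat{D}^{+}(A)$ to all of $\cat{D}(A)$, and weak stability to supply the idempotency), and it is the step I would spell out with the most care.
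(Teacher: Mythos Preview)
Your proposal is correct and follows essentially the same route as the paper: both proofs tensor the triangle of Theorem~\ref{t:03} with $M$, invoke the natural isomorphism $\gamma: \mrm{R}\Ga_{\cat{T}}(A)\ot_{A}^{\mrm{L}}(-)\to \mrm{R}\Ga_{\cat{T}}$ from \cite[Theorem~7.12]{VY} (compatible with the maps to $M$), and then use the vanishing $\mrm{Hom}_{\cat{D}(A)}(\mrm{R}\Ga_{\cat{T}}(A)\ot_{A}^{\mrm{L}} M,\mrm{R}\Ja_{\cat{T}}(N)[i])=0$ (via Lemma~\ref{l:08} and Proposition~\ref{p:09}) to obtain a unique $\delta_{M}$ and deduce functoriality, exactly as in Lemma~\ref{l:03}. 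The only cosmetic difference is ordering: the paper first produces $\delta_{M}$ from the TR3 axiom (so it is automatically an isomorphism by the two-out-of-three property) and then proves uniqueness, whereas you establish uniqueness first and then invoke the five lemma; since the TR3 fill-in satisfies your defining compatibility, the two constructions coincide.
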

\begin{proof}
Suppose $M\in \cat{D}(A)$. There are two distinguished triangles that we can associate with $M$. The first is \[\mrm{R}\Ga_{\cat{T}}(A)\ot_{A}^{\mrm{L}} M \xrightarrow{\si^{\mrm{R}}_{A}\otimes^{\mrm{L}}_{A} \opn{id}_{M}} M \xrightarrow{\phi_{\cat{T}}\ot_{A}^{\mrm{L}} \opn{id}_{M}} A_{\cat{T}} \ot_{A}^{\mrm{L}} M\xrightarrow{\psi_{\cat{T}}\ot_{A}^{\mrm{L}} \opn{id}_{M}} (\mrm{R}\Ga_{\cat{T}}(A)\ot_{A}^{\mrm{L}} M)[1].\] The second is \[\mrm{R}\Ga_{\cat{T}}(M)\xrightarrow{\si^{\mrm{R}}_{M}} M \xrightarrow{\tau^{\mrm{R}}_{M}} \mrm{R}\Ja_{\cat{T}}(M)\xrightarrow{\lambda_{M}} \mrm{R}\Ga_{\cat{T}}(M)[1].\]

From \cite[Theorem 7.12]{VY}, we know there is an isomorphism \[\gamma: \mathrm{R}\Ga_{\cat{T}}(A)\ot_{A}^{\mrm{L}}(-)\to \mrm{R}\Ga_{\cat{T}}\]  of triangulated functors from $\cat{D}(A)$ to itself such that for any $M\in \cat{D}(A)$, $\si^{\mrm{R}}_{M}\circ \gamma_{M} = \si^{\mrm{R}}_{A}\ot_{A}^{\mrm{L}} \opn{id}_{M}.$ Consider the diagram, 

\begin{displaymath}
\xymatrix{
\mrm{R}\Ga_{\cat{T}}(A)\ot_{A}^{\mrm{L}} M \ar[rr]^(.60){\si^{\mrm{R}}_{A}\otimes_{A}^{\mrm{L}} \opn{id}_{M}} \ar[d]^{\gamma_{M}}  & & M \ar[rr]^(.45){\phi_{\cat{T}}\ot_{A}^{\mrm{L}} \opn{id}_{M}} \ar[d]^{\opn{id}_{M}} & &  A_{\cat{T}} \ot_{A}^{\mrm{L}} M \ar[r]^(.4){\alpha}  \ar@{-->}[d]^{\delta_{M}} & (\mrm{R}\Ga_{\cat{T}}(A)\ot_{A}^{\mrm{L}} M)[1] \ar[d]^{\gamma_{M}[1]} \\
\mrm{R}\Ga_{\cat{T}}(M) \ar[rr]^{\si^{\mrm{R}}_{M}}  & &  M \ar[rr]^{\tau^{\mrm{R}}_{M}} &  & \mrm{R}\Ja_{\cat{T}}(M) \ar[r]^{\lambda_{M}}   & \mrm{R}\Ga_{\cat{T}}(M)[1],
}
\end{displaymath}

where $\alpha = \psi_{\cat{T}}\ot_{A}^{\mrm{L}} \opn{id}_{M}$.
The leftmost square in the above diagram is commutative, and all three solid vertical arrows are isomorphisms. The axioms for a triangulated category imply that there is a map $\delta_{M}: A_{\cat{T}}\ot_{A}^{\mrm{L}} M\to \mrm{R}\Ja_{\cat{T}}(M)$ which makes the above diagram commute. By itself, this is not enough: while the axioms for a triangulated category guarantee the existence of an isomorphism they say nothing about uniqueness or functoriality, and  we want the maps $\delta_{M}$, as $M$ varies over $\cat{D}(A)$, to assemble together into a morphism of functors. 

In this particular case, however, the choice of $\delta_{M}$ \emph{is} unique. The argument is not dissimilar to the proof of Lemma \ref{l:03}, but we give the details. Suppose $M, N \in \cat{D}(A)$. By Lemma \ref{l:08} and Proposition \ref{p:09}, $\mathrm{Hom}_{\cat{D}(A)}(\mrm{R}\Ga_{\cat{T}}(M), \mrm{R}\Ja_{\cat{T}}(N)) = 0$. Since $\gamma_{M}$ is an isomorphism, this implies that $\mathrm{Hom}_{\cat{D}(A)}(\mrm{R}\Ga_{\cat{T}}(A)\otimes_{A}^{\mrm{L}} M, \mrm{R}\Ja_{\cat{T}}(N)) =0 $.

Applying the cohomological functor $\mathrm{Hom}_{\cat{D}(A)}(-, \mrm{R}\Ja_{\cat{T}}(N))$ to the distinguished triangle  \[\mrm{R}\Ga_{\cat{T}}(A)\ot_{A}^{\mrm{L}} M \xrightarrow{\si^{\mrm{R}}_{A}\otimes^{\mrm{L}}_{A} \opn{id}_{M}} M \xrightarrow{\phi_{\cat{T}}\ot^{\mrm{L}}_{A} \opn{id}_{M}} A_{\cat{T}} \ot_{A}^{\mrm{L}} M\xrightarrow{\psi_{\cat{T}}\ot_{A}^{\mrm{L}} \opn{id}_{M}} (\mrm{R}\Ga_{\cat{T}}(A)\ot_{A}^{\mrm{L}} M)[1],\] we conclude that the map $\mathrm{Hom}_{\cat{D}(A)}(\phi_{\cat{T}}\ot^{\mrm{L}}_{A} \opn{id}_{M}, \opn{id}_{\mrm{R}\Ja_{\cat{T}}(N)}):$ \[\mathrm{Hom}_{\cat{D}(A)}(A_{\cat{T}} \ot_{A}^{\mrm{L}} M, \mrm{R}\Ja_{\cat{T}}(N))\to \mathrm{Hom}_{\cat{D}(A)}(M, \mrm{R}\Ja_{\cat{T}}(N))\] is an isomorphism.
Taking $M = N$, this proves the uniqueness of $\delta_{M}$.

Let $f:M\to N$ be a morphism in $\cat{D}(A)$. We want to show that \[\mrm{R}\Ja_{\cat{T}}(f)\circ \delta_{M} = \delta_{N} \circ (\opn{id}_{A_{\cat{T}}}\ot_{A}^{\mrm{L}} f).\] It is enough to show that both sides are equal after precomposing by $\phi_{\cat{T}}\ot_{A}^{\mrm{L}} \opn{id}_{M}$. We have just proved that for any $M\in \cat{D}(A)$, \[\delta_{M}\circ (\phi_{\cat{T}}\ot_{A}^{\mrm{L}} \opn{id}_{M}) = \tau^{\mrm{R}}_{M}.\] Therefore \[\mrm{R}\Ja_{\cat{T}}(f)\circ \delta_{M}  \circ (\phi_{\cat{T}}\ot_{A}^{\mrm{L}} \opn{id}_{M}) = \mrm{R}\Ja_{\cat{T}}(f) \circ \tau^{\mrm{R}}_{M}.\] On the other hand, \[ \delta_{N} \circ (\opn{id}_{A_{\cat{T}}}\ot_{A}^{\mrm{L}} f) \circ (\phi_{\cat{T}}\ot_{A}^{\mrm{L}} \opn{id}_{M}) = \delta_{N} \circ (\phi_{\cat{T}}\ot_{A}^{\mrm{L}}  f )= \tau^{\mrm{R}}_{N} \circ f.\] Finally,  $\mrm{R}\Ja_{\cat{T}}(f) \circ \tau^{\mrm{R}}_{M} = \tau^{\mrm{R}}_{N} \circ f.$
A similar argument implies that $\delta$ is a morphism of triangulated functors.
\end{proof}

Recall that a morphism of dg rings $\phi:A\to B$ is called a \emph{homological epimorphism} if the canonical multiplication map $B\ot_{A}^{\mrm{L}} B\to B$ is an isomorphism in $\cat{D}(\K)$. This is equivalent to either of the maps $\phi\ot_{A}^{\mrm{L}} \opn{id}_{B}: B\to B\ot_{A}^{\mrm{L}} B$, $\opn{id}_{B}\ot_{A}^{\mrm{L}} \phi: B\to B\ot_{A}^{\mrm{L}} B$ being isomorphims in $\cat{D}(\K)$, and is also equivalent to the restriction functor $\opn{rest}_{\phi}:\cat{D}(B)\to \cat{D}(A)$ being fully faithful. 

\begin{corollary} \label{c:01}
Let $A$ be a flat ring, and $\cat{T}$ a quasi-compact, finite dimensional, weakly stable torsion class in $\cat{M}(A)$. Then, the morphism of dg rings $\phi_{\cat{T}}: A\to A_{\cat{T}}$ is a homological epimorphism.
\end{corollary}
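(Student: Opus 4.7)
The plan is to use the characterisation of a homological epimorphism noted in the paragraph preceding the corollary: it suffices to show that the map $\phi_{\cat{T}}\ot_{A}^{\mrm{L}} \opn{id}_{A_{\cat{T}}}: A_{\cat{T}}\to A_{\cat{T}}\ot_{A}^{\mrm{L}} A_{\cat{T}}$ is an isomorphism in $\cat{D}(A)$. Applying Theorem \ref{p:11} with $M = A_{\cat{T}}$ produces an isomorphism $\delta_{A_{\cat{T}}}: A_{\cat{T}} \ot_A^{\mrm{L}} A_{\cat{T}} \iso \mrm{R}\Ja_{\cat{T}}(A_{\cat{T}})$ satisfying the identity $\delta_{A_{\cat{T}}}\circ (\phi_{\cat{T}}\ot_{A}^{\mrm{L}} \opn{id}_{A_{\cat{T}}}) = \tau^{\mrm{R}}_{A_{\cat{T}}}$. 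The problem therefore reduces to showing that $\tau^{\mrm{R}}_{A_{\cat{T}}}: A_{\cat{T}} \to \mrm{R}\Ja_{\cat{T}}(A_{\cat{T}})$ is an isomorphism.

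To establish this, I would transport the question to $\mrm{R}\Ja_{\cat{T}}(A)$ via the isomorphism $\xi_{\cat{T}}: A_{\cat{T}} \iso \mrm{R}\Ja_{\cat{T}}(A)$ supplied by Corollary \ref{c:05}: naturality of $\tau^{\mrm{R}}$ yields a commutative square with $\xi_{\cat{T}}$ and $\mrm{R}\Ja_{\cat{T}}(\xi_{\cat{T}})$ as horizontal arrows, so $\tau^{\mrm{R}}_{A_{\cat{T}}}$ is an isomorphism if and only if $\tau^{\mrm{R}}_{\mrm{R}\Ja_{\cat{T}}(A)}$ is. The latter, however, is automatic: since $\cat{T}$ is weakly stable and finite dimensional, Proposition \ref{p:09}(ii) tells us that $(\mrm{R}\Ja_{\cat{T}}, \tau^{\mrm{R}})$ is an idempotent pointed functor on $\cat{D}(A)$, and by Definition \ref{d:07} this is precisely the statement that $\tau^{\mrm{R}}_{\mrm{R}\Ja_{\cat{T}}(M)}$ is an isomorphism for every $M \in \cat{D}(A)$.

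I do not anticipate any serious obstacle: the argument is essentially a formal assembly of results already in place, with all the analytical content absorbed into Theorem \ref{p:11}, Corollary \ref{c:05}, and the idempotence of $(\mrm{R}\Ja_{\cat{T}}, \tau^{\mrm{R}})$ supplied by Proposition \ref{p:09}. The decisive input is the compatibility $\delta_M \circ (\phi_{\cat{T}} \ot_A^{\mrm{L}} \opn{id}_M) = \tau^{\mrm{R}}_M$ built into Theorem \ref{p:11}: without that identification, the tensor-product criterion for a homological epimorphism would not convert directly into an idempotence statement about the derived localisation.
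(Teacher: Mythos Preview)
Your argument is correct and is essentially the same approach the paper takes: the paper's proof is the one-line ``This is a consequence of Proposition \ref{p:04} and Theorem \ref{p:11}'', and your write-up simply unpacks that sentence, using Theorem \ref{p:11} to convert $\phi_{\cat{T}}\ot_{A}^{\mrm{L}}\opn{id}_{A_{\cat{T}}}$ into $\tau^{\mrm{R}}_{A_{\cat{T}}}$ and then invoking idempotence of $(\mrm{R}\Ja_{\cat{T}},\tau^{\mrm{R}})$. Your citation of Proposition \ref{p:09} (together with Corollary \ref{c:05} to pass from $A_{\cat{T}}$ to $\mrm{R}\Ja_{\cat{T}}(A)$) is arguably the more precise reference for the idempotence step; the paper's pointer to Proposition \ref{p:04} is to the $\mrm{R}\Ga_{\cat{T}}$-side statement from which Proposition \ref{p:09} is deduced.
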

\begin{proof}
This is a consequence of Proposition \ref{p:04} and Theorem \ref{p:11}.
\end{proof}

\begin{corollary} \label{c:02}
Let $A$ be a flat ring, and $\cat{T}$ a quasi-compact, finite dimensional, weakly stable torsion class in $\cat{M}(A)$. Then, an object lies in the essential image of the functor $\opn{rest}_{\phi_{\cat{T}}}:\cat{D}(A_{\cat{T}})\to \cat{D}(A)$ if and only if $\mathrm{R}\Ga_{\cat{T}}(M)=0$.
\end{corollary}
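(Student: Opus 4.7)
The plan is to reduce the statement to a standard characterisation of the essential image of the restriction functor along a homological epimorphism, then transport the condition across the natural isomorphism $\delta$ of Theorem \ref{p:11}, and finally read off the answer from the distinguished triangle of Proposition \ref{p:08}.

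First I would invoke Corollary \ref{c:01}: the map $\phi_{\cat{T}}:A\to A_{\cat{T}}$ is a homological epimorphism, so $\opn{rest}_{\phi_{\cat{T}}}:\cat{D}(A_{\cat{T}})\to \cat{D}(A)$ is fully faithful and admits $A_{\cat{T}}\ot_{A}^{\mrm{L}}(-)$ as a left adjoint (as recalled just before Corollary \ref{c:01}). By the standard theory of fully faithful right adjoints, an object $M\in \cat{D}(A)$ lies in the essential image of $\opn{rest}_{\phi_{\cat{T}}}$ if and only if the unit of the adjunction evaluated at $M$ is an isomorphism; concretely, this unit is represented in $\cat{D}(A)$ by the map
\[ \phi_{\cat{T}}\ot_{A}^{\mrm{L}} \opn{id}_{M}: M \to A_{\cat{T}}\ot_{A}^{\mrm{L}} M. \]

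Next, I would apply Theorem \ref{p:11}, which provides an isomorphism of triangulated functors $\delta: A_{\cat{T}}\ot_{A}^{\mrm{L}}(-)\to \mrm{R}\Ja_{\cat{T}}$ satisfying $\delta_{M}\circ (\phi_{\cat{T}}\ot_{A}^{\mrm{L}} \opn{id}_{M}) = \tau^{\mrm{R}}_{M}$ for every $M\in \cat{D}(A)$. Since $\delta_{M}$ is an isomorphism, the map $\phi_{\cat{T}}\ot_{A}^{\mrm{L}} \opn{id}_{M}$ is an isomorphism if and only if $\tau^{\mrm{R}}_{M}: M\to \mrm{R}\Ja_{\cat{T}}(M)$ is.

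Finally, the distinguished triangle of Proposition \ref{p:08},
\[ \mrm{R}\Ga_{\cat{T}}(M)\xrightarrow{\si^{\mrm{R}}_{M}} M \xrightarrow{\tau^{\mrm{R}}_{M}} \mrm{R}\Ja_{\cat{T}}(M) \xrightarrow{\lambda_{M}} \mrm{R}\Ga_{\cat{T}}(M)[1], \]
shows that $\tau^{\mrm{R}}_{M}$ is an isomorphism precisely when $\mrm{R}\Ga_{\cat{T}}(M)=0$, completing the chain of equivalences. The mildest obstacle is the first step: one needs the standard fact that for a homological epimorphism $\phi$ the essential image of $\opn{rest}_{\phi}$ consists of exactly those complexes for which the unit $M\to B\ot_{A}^{\mrm{L}} M$ is an isomorphism. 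Everything after that is bookkeeping via the natural transformations already constructed in the paper.
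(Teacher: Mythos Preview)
Your argument is correct and follows exactly the same route as the paper's proof: use Corollary \ref{c:01} to reduce to the unit $\phi_{\cat{T}}\ot_{A}^{\mrm{L}}\opn{id}_{M}$ being an isomorphism, transport via Theorem \ref{p:11} to $\tau^{\mrm{R}}_{M}$, and conclude from the triangle of Proposition \ref{p:08}. You have simply spelled out in more detail what the paper compresses into three sentences.
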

\begin{proof}
Since $\phi_{\cat{T}}:A\to A_{\cat{T}}$ is a homological epiomorphism, $M\in \cat{D}(A)$ lies in the essential image of $\opn{rest}_{\phi_{\cat{T}}}$ if and only if $\phi_{\cat{T}}\ot_{A}^{\mrm{L}} \opn{id}_{M}$ is an isomorphism. By Theorem \ref{p:11}, this happens if and only if $\tau^{\mrm{R}}_{M}$ is an isomorphism which in turn, by Proposition \ref{p:08}, occurs precisely when $\mrm{R}\Ga_{\cat{T}}(M) = 0$.
\end{proof}

\begin{remark}
It is possible to work without the restriction that $A$ is a flat ring. In this case, however, we must replace the category $\cat{D}(A^{\mrm{en}})$ with $\cat{D}(A\ot_{\K}^{\mrm{L}} A^{\mrm{op}})$ (see Remark \ref{r:01}). With this modification, all of the results in this section continue to hold, but the arguments required to justify them are more technically involved.
\end{remark}

\begin{remark}
Let $\a$ be a weakly proregular ideal in a commutative ring $A$, with associated torsion class $\cat{T}_{\a}$. In this case, there is an explicit description of the dg ring $A_{\a}:= A_{\cat{T}_{\a}}$: see \cite[Section 7]{PSY1}.
\end{remark}

\begin{remark}
When $\cat{T}$ is quasi-compact, finite dimensional, and weakly stable, the functor $\mrm{R}\Ga_{\cat{T}}$ induces a recollement on the category $\cat{D}(A)$ (\cite[Proposition 4.13.1]{Kr}). In \cite[Theorem 4]{NS}, Nicol\'{a}s and Saor\'{i}n show that recollements on a dg category $\cat{A}$ are parametrised by homological epimorphisms of dg categories $\cat{A}\to \cat{B}$. The morphism $\phi_{\cat{T}}:A\to \cat{A}_{\cat{T}}$ fits into their framework, but can be arrived at here by more direct means. 
\end{remark}

\section{Completion: The case of a single regular element} \label{s:06}

To begin, suppose $A$ is a flat ring, and $\cat{T}$ is a quasi-compact, finite dimensional, weakly stable torsion class in $\cat{M}(A)$. In \cite[Theorem 4.12]{VY}, the author and Yekutieli prove that there is an isomorphism \[\gamma: \mathrm{R}\Ga_{\cat{T}}(A)\ot_{A}^{\mrm{L}} (-) \to \mrm{R}\Ga_{\cat{T}}\] of triangulated functors from $\cat{D}(A)$ to itself. This in turn implies that the functor $\mrm{R}\Ga_{\cat{T}}$ has a left adjoint, $\mathrm{G}_{\cat{T}}: = \mrm{RHom}_{A}(\mrm{R}\Ga_{\cat{T}}(A),-): \cat{D}(A)\to \cat{D}(A).$

Now suppose $A$ is a commutative ring, and $\a$ a weakly proregular ideal in $A$. The torsion class associated to $\a$, $\cat{T}_{\a}$ is quasi-compact and finite dimensional (\cite[Corollary 4.21]{VY}). In this case, it is possible to describe the functor $\mathrm{G}_{\a}:= \mrm{RHom}_{A}(\mrm{R}\Ga_{\a}(A),-): \cat{D}(A)\to \cat{D}(A)$ in a more explicit manner.

If $\a$ is an ideal in a commutative ring $A$, let \[\Lambda_{\a}: \cat{M}(A)\to \cat{M}(A)\] denote the $\a$-adic completion functor. Explicitly, if $M\in \cat{M}(A)$, \[ \Lambda_{\a}(M):= \varprojlim_{n\in \mathbb{Z}^{+}} M/\a^{n}M = \varprojlim_{n\in \mathbb{Z}^{+}} (A/\a^{n}\ot_{A} M).\]

\begin{theorem}\cite[Theorem 6.11]{PSY1} Let $\a$ be a weakly proregular ideal in a commutative ring $A$. Then, $\mrm{L}\Lambda_{\a}:\cat{D}(A)\to \cat{D}(A)$ is right adjoint to $\mrm{R}\Gamma_{\a}:\cat{D}(A)\to \cat{D}(A).$
\end{theorem}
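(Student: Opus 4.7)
The plan is to exhibit $\mrm{L}\Lambda_{\a}$ as the right adjoint of $\mrm{R}\Ga_{\a}$ by combining the general adjunction machinery already set up in the paper with an explicit identification of that right adjoint in completion-theoretic terms. By the quoted \cite[Theorem 4.12]{VY}, weak proregularity of $\a$ makes $\cat{T}_{\a}$ a weakly stable torsion class, and by \cite[Corollary 4.21]{VY} it is moreover quasi-compact and finite dimensional. The noncommutative MGM theorem quoted in the introduction therefore applies to $\cat{T}_{\a}$ and yields a right adjoint $G_{\a} := \mrm{RHom}_{A}(\mrm{R}\Ga_{\a}(A),-)$ to $\mrm{R}\Ga_{\a}$. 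The entire task thus reduces to constructing a natural isomorphism $G_{\a} \cong \mrm{L}\Lambda_{\a}$.

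For this I would use the explicit Koszul-telescope representation of $\mrm{R}\Ga_{\a}(A)$. Fix a finite generating sequence $\mathbf{a} = (a_{1}, \ldots, a_{n})$ of $\a$, weakly proregular by hypothesis, and let $\opn{Tel}(\mathbf{a}) := \bigotimes_{i=1}^{n}[A\to A_{a_{i}}]$ be the associated two-term telescope (each factor concentrated in degrees $0$ and $1$). Weak proregularity together with the standard Koszul-theoretic description of local cohomology supplies a canonical quasi-isomorphism $\opn{Tel}(\mathbf{a}) \simeq \mrm{R}\Ga_{\a}(A)$. Since $\opn{Tel}(\mathbf{a})$ is a bounded complex of flat modules, hence K-flat, the tensor-Hom adjunction identifies $G_{\a}(M)$ with $\mrm{RHom}_{A}(\opn{Tel}(\mathbf{a}), M)$, reducing the problem to proving $\mrm{RHom}_{A}(\opn{Tel}(\mathbf{a}), -) \cong \mrm{L}\Lambda_{\a}$.

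To produce this last isomorphism, write $\opn{Tel}(\mathbf{a}) \simeq \opn{hocolim}_{k}\opn{K}^{\vee}(\mathbf{a}^{k})$ as a homotopy colimit of shifted dual Koszul complexes, and pass $\mrm{RHom}_{A}(-,M)$ through the colimit to produce the homotopy limit $\opn{holim}_{k}(\opn{K}(\mathbf{a}^{k})\ot_{A}^{\mrm{L}} M)$. Weak proregularity enters here decisively: it says exactly that the pro-system $\{\opn{K}(\mathbf{a}^{k})\}_{k}$ is pro-isomorphic in $\cat{D}(A)$ to the pro-system $\{A/\a^{k}\}_{k}$, and this pro-isomorphism yields $\opn{holim}_{k}(\opn{K}(\mathbf{a}^{k})\ot_{A}^{\mrm{L}} M) \simeq \opn{holim}_{k}(A/\a^{k}\ot_{A}^{\mrm{L}} M) \simeq \mrm{L}\Lambda_{\a}(M)$; the final isomorphism is the standard presentation of derived completion as a homotopy limit of derived quotients, valid for any finitely generated ideal.

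The main obstacle is precisely the translation of a pro-isomorphism into an isomorphism of homotopy limits: pro-isos do not, in general, induce isomorphisms on plain inverse limits, so one must argue that $\opn{holim}$ is the correct derived construction and extract enough Mittag-Leffler-type control from weak proregularity to conclude that both $\opn{holim}$'s genuinely agree. This step is where the weak proregularity hypothesis is carrying all the weight of the theorem, and essentially amounts to the Greenlees-May analysis, refined in the commutative setting by Alonso-Jeremias-Lipman, Schenzel, and Porta-Shaul-Yekutieli.
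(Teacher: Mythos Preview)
The paper does not prove this statement: it is quoted as \cite[Theorem 6.11]{PSY1} and used only as motivation for the noncommutative analogue treated later (Theorem~5.6 and Corollary~5.7). There is therefore no proof in the paper to compare against; your sketch is an outline of the argument from the cited reference, not of anything the present paper does.

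On the sketch itself: the strategy is the right one, but there is a technical slip. The complex you write as $\bigotimes_{i}[A\to A_{a_{i}}]$ is the \v{C}ech (stable Koszul) complex, not the telescope. It is K-flat, as you say, but K-flatness is the wrong acyclicity condition for the first argument of $\mrm{RHom}_{A}(-,M)$; you need K-projectivity there, and $A_{a_{i}}$ is typically not projective. The actual telescope $\opn{Tel}(A;\mathbf{a})$ in \cite{PSY1} is a bounded complex of \emph{free} modules quasi-isomorphic to the \v{C}ech complex, built precisely so that $\mrm{Hom}_{A}(\opn{Tel}(A;\mathbf{a}),-)$ computes the right derived functor honestly. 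Once you use the correct telescope, your hocolim/holim manipulation goes through, and the weak proregularity hypothesis enters exactly where you say, via the pro-vanishing of Koszul homology. Your final paragraph is an honest acknowledgement that this last step is the substance of the theorem and is not being reproved here.
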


\begin{example}
Let $A$ be an integral domain with field of fractions $F$ such that the projective dimension of $F$ over $A$ is greater than or equal to two. Let $S:= A\setminus \{0\}$. The torsion class $\cat{T}_{S}$ is weakly stable (indeed, it is stable). Thus, by Theorem \ref{t:02}, $\cat{T}_{S}$ is also quasi-compact and finite dimensional, and $\mrm{R}\Ga_{S} \cong \opn{K}(A;S)\ot^{\mrm{L}}_{A} (-)$, which implies that  $\mathrm{G}_{\cat{T}}\cong \mrm{RHom}_{A}(\opn{K}(A;S),-).$ Since the projective dimension of $F$ is greater than one, there exists $M\in \cat{M}(A)$ such that $\opn{H}^{1}(\mrm{RHom}_{A}(\opn{K}(A;S),M))\neq 0$. Thus $\mathrm{G}_{\cat{T}}$ cannot be equal to $\mrm{L}F$ for any functor $F:\cat{M}(A)\to \cat{M}(A)$.
\end{example}

\begin{question}
Let $A$ be a ring, and $\cat{T}$ a quasi-compact, finite dimensional, weakly stable torsion class. When does there exist a functor $\Lambda_{\cat{T}}:\cat{M}(A)\to \cat{M}(A)$ such that $\mathrm{G}_{\cat{T}}\cong \mrm{L}\Lambda_{\cat{T}}$? As the above example shows, this cannot always happen. It is unclear whether the projective dimension of $\mrm{R}\Ga_{\cat{T}}(A)$ is the only obstacle here. 
\end{question}

There is a noncommutative situation where we can make a positive claim in this regard. Let $A$ be a ring (not necessarily commutative, or flat). Recall that an element $s\in A$ is \textit{normal} if $As=sA$, and \textit{regular} if it has no left or right zero-divisors. 

When $s$ is normal and regular, the set $S=\{{s^{n}}\}_{n\in \mathbb{N}}$ is a left denominator set. We will denote the localisation of $A$ at $S$ by $A_{s}$. Denote $\cat{T}_{s}:=\cat{T}_{S}$ and $\Ga_{s}:=\Ga_{S}$. As in \S \ref{s:03}, there is the morphism of functors $\si^{\mrm{R}}:\mrm{R}\Ga_{s}\to \opn{id}_{\cat{D}(A)}$.

\begin{lemma} \label{l:04}
Let $s$ be a normal and regular element in a ring $A$. The torsion class $\cat{T}_{s}$ is quasi-compact, weakly stable, and has cohomological dimension less than or equal to one.
\end{lemma}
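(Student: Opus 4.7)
The plan is to verify condition (iii) of Theorem \ref{t:02}: for every injective $A$-module $E$, the canonical localisation map $E \to E_{s}$ is surjective. Once this is established, Theorem \ref{t:02} immediately yields weak stability of $\cat{T}_{s}$, together with the isomorphism $\mrm{R}\Ga_{s} \cong \opn{K}(A;S) \ot_{A}^{\mrm{L}} (-)$ and the bound on the cohomological dimension of $\Ga_{s}$.

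The key point is that every injective $A$-module $E$ is \emph{$s$-divisible} in the sense that left multiplication by $s$ is a surjection $E \to E$. Given $e \in E$, consider the left ideal $As \subseteq A$; since $s$ is right regular, the assignment $as \mapsto ae$ is a well-defined left $A$-module homomorphism $f \colon As \to E$. Injectivity of $E$ extends $f$ along the inclusion $As \hookrightarrow A$ to a map $\tilde{f} \colon A \to E$, and setting $e' := \tilde{f}(1)$ we have $ae = f(as) = \tilde{f}(as) = as \cdot e'$ for every $a \in A$; taking $a = 1$ yields $se' = e$. Iterating, for any $n \geq 0$ and any $e \in E$ there exists $e_{n} \in E$ with $s^{n} e_{n} = e$, so $s^{-n} \otimes e = 1 \otimes e_{n}$ in $E_{s}$, proving surjectivity of $E \to E_{s}$.

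For quasi-compactness, note that the functor $\opn{K}(A;S) \ot_{A}^{\mrm{L}}(-)$ preserves arbitrary direct sums, since tensoring with a two-term complex of modules commutes with direct sums and cohomology commutes with direct sums. Combined with the isomorphism $\mrm{R}\Ga_{s} \cong \opn{K}(A;S) \ot_{A}^{\mrm{L}} (-)$, each $\mrm{R}^{i}\Ga_{s}$ commutes with direct sums, so $\cat{T}_{s}$ is quasi-compact. The only nontrivial step in the whole argument is establishing the $s$-divisibility of injective modules via the injective extension above; the remaining assertions of the lemma are direct appeals to Theorem \ref{t:02}.
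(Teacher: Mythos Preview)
Your proof is correct, and it takes a genuinely different route from the paper's argument. The paper proceeds in the opposite direction: it first observes that regularity of $s$ makes the two-term complex $A \xrightarrow{\cdot s^{n}} A$ a projective resolution of $A/As^{n}$, so $\mrm{Ext}^{i}_{A}(A/As^{n},-) = 0$ for $i \geq 2$; combined with the formula $\mrm{R}^{i}\Ga_{s} \cong \varinjlim_{n} \mrm{Ext}^{i}_{A}(A/As^{n},-)$, this directly bounds the cohomological dimension by one, and then Lemma~\ref{l:02} yields weak stability. Quasi-compactness is read off from the same $\mrm{Ext}$ description (the paper also notes Theorem~\ref{t:02} as an alternative). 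In contrast, you verify condition~(iii) of Theorem~\ref{t:02} by the elementary $s$-divisibility argument for injectives, and then invoke the full strength of that theorem to obtain weak stability, the cohomological dimension bound, and the tensor description $\mrm{R}\Ga_{s} \cong \opn{K}(A;S)\otimes_{A}^{\mrm{L}}(-)$ all at once. Your route is slightly more economical in that it uses Theorem~\ref{t:02} as a black box rather than reproving part of it via Lemma~\ref{l:02}; the paper's route is more self-contained and makes the projective-resolution mechanism explicit, which is the form needed later in Section~\ref{s:06}.
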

\begin{proof}
Since $s$ is regular, for any $n\in \mathbb{N}$ the two term complex
\begin{equation} \label{e:17}
\ldots \to 0\to A\xrightarrow{\cdot s^{n}} A\to 0\to \ldots,
\end{equation}
with $A$ sitting in degrees $-1$ and $0$, is a projective resolution of $A/As^{n}$.
This implies that $\mrm{Ext}^{i}_{A}(A/As^{n},M) = 0$ for all $M\in \cat{M}(A),$ $i\geq 2$, and $n\in \mathbb{N}.$ Since 
\begin{equation} \label{e:14}
\mrm{R}^{i}\Ga_{s}\cong \varinjlim_{n\in \mathbb{N}} \mathrm{Ext}^{i}_{A}(A/As^{n}-),
\end{equation} 
for all $i\geq 0$, this in turn implies that the  cohomological dimension of $\Gamma_{s}$ is less than or equal to one. By Lemma \ref{l:02}, $\cat{T}_{s}$ is weakly stable.  Using the resolution in \ref{e:17}, the formula in \ref{e:14} also implies that $\cat{T}_{s}$ is quasi-compact (alternatively, use Theorem \ref{t:02}).
\end{proof}

Given a normal element $s$ in $A$, $As=sA$ is the ideal generated by $s$, which we denote by $(s)$. $(s)^{n}=(s^{n})=As^{n}=s^{n}A$. Let \[\Lambda_{s}: \cat{M}(A)\to \cat{M}(A)\] denote the completion functor at this ideal. Explicitly, if $M\in \cat{M}(A)$, \[ \Lambda_{s}(M):= \varprojlim_{n\in \mathbb{Z}^{+}} M/s^{n}M = \varprojlim_{n\in \mathbb{Z}^{+}} (A/(s)^{n}\ot_{A} M).\]

If $s$ is a normal and regular element in a ring $A$, and $S=\{{s^{n}}\}_{n\in \mathbb{N}}$ the associated left denominator set, we denote the complex $\opn{K}(A;S)$ as in Definition \ref{d:04} by $\opn{K}(A;s)$. We remind the reader that \[\opn{K}(A;s) = \ldots \to 0\to A\to A_{s}\to 0 \to \ldots\] is a complex in $\cat{C}(A^\mrm{en})$,  where $A$ and $A_{s}$ sit in degrees $0$ and $1$ respectively. There is a morphism of complexes $e_{s}: \opn{K}(A;s)\to A.$

\begin{lemma} \label{l:12}
Let $s$ be a normal and regular element in a ring $A$. Then, there is an isomorphism \[\nu_{s}: \mrm{R}\Ga_{s} \to \opn{K}(A;s)\ot_{A}^{\mrm{L}}(-)\] of triangulated functors from $\cat{D}(A)$ to itself, such that $(e_{s}\ot_{A}^{\mrm{L}} \opn{id}_{(-)})\circ \nu_{s} = \si^{R}.$
\end{lemma}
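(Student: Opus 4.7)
The statement is a direct specialisation of the machinery already developed for an arbitrary left denominator set, so the plan is simply to invoke the appropriate earlier results in the right order. Concretely, I would take $S = \{s^{n}\}_{n \in \mathbb{N}}$, which is a left denominator set by the normality and regularity of $s$, and recall that $\opn{K}(A;s)$ is by definition $\opn{K}(A;S)$ and $e_{s} = e$.

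First, apply Lemma \ref{l:03} with this choice of $S$. This produces a morphism of triangulated functors
\[
\nu_{s} : \mrm{R}\Ga_{s} \to \opn{K}(A;s)\ot_{A}^{\mrm{L}}(-)
\]
from $\cat{D}(A)$ to itself, uniquely characterised by the property $(e_{s}\ot_{A}^{\mrm{L}} \opn{id}_{(-)})\circ \nu_{s} = \si^{\mrm{R}}$. This already supplies both the candidate morphism and the required compatibility with $\si^{\mrm{R}}$.

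Second, I would verify that $\nu_{s}$ is an isomorphism. By Lemma \ref{l:04}, the torsion class $\cat{T}_{s}$ is weakly stable (indeed, $\Ga_{s}$ has cohomological dimension at most one, and the implication from Lemma \ref{l:02} applies). Theorem \ref{t:02} then gives the equivalence of (i) weak stability of $\cat{T}_{s}$ and (iv) the morphism $\nu$ from Lemma \ref{l:03} being an isomorphism of functors. Combining these two, $\nu_{s}$ is an isomorphism, which finishes the proof.

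Since everything reduces to citing Lemmas \ref{l:03}, \ref{l:04} and Theorem \ref{t:02}, there is no substantive obstacle to overcome; the only point worth emphasising is that the $\nu_{s}$ produced by Lemma \ref{l:03} is literally the same morphism whose invertibility is characterised by Theorem \ref{t:02}(iv), so no further compatibility check is required.
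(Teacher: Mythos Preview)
Your proof is correct and follows essentially the same route as the paper: invoke Lemma~\ref{l:04} for weak stability and then Theorem~\ref{t:02} to conclude that the morphism $\nu$ of Lemma~\ref{l:03} is an isomorphism. The paper compresses this into two lines, but the content is identical.
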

\begin{proof}
By Lemma \ref{l:04}, $\cat{T}_{s}$ is weakly stable. We now apply Theorem \ref{t:02}.
\end{proof}

\begin{theorem} \label{t:01}
Let $s$ be a normal and regular element in a ring $A$. Then, there is an isomorphism \[\omega_{s}: \mrm{RHom}_{A}(\cat{K}(A;s), -)\to \mrm{L}\Lambda_{s}\] of triangulated functors from $\cat{D}(A)$ to itself.
\end{theorem}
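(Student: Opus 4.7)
The plan is to produce $\omega_s$ by exhibiting explicit two-term models for both functors. Using the standard telescope, I would first build a K-projective resolution $\widetilde{\opn{K}}(A;s) \to \opn{K}(A;s)$: take $\widetilde{\opn{K}}(A;s)^0 := A \oplus \bigoplus_{n \geq 0} A$ and $\widetilde{\opn{K}}(A;s)^1 := \bigoplus_{n \geq 0} A$ with differential $(a, (p_n)_n) \mapsto (a+p_0, (p_k - p_{k-1} s)_{k \geq 1})$; the augmentation to $\opn{K}(A;s)$ sends $(a, (p_n)) \mapsto a$ in degree $0$ and $(p_n) \mapsto \sum_n p_n s^{-n}$ in degree $1$, and is a quasi-isomorphism by regularity of $s$. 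As a bounded complex of free left $A$-modules, it is K-projective, so for any $M \in \cat{D}(A)$ one has $\mrm{RHom}_A(\opn{K}(A;s), M) \cong \mrm{Hom}_A(\widetilde{\opn{K}}(A;s), M)$. A direct computation identifies this with the two-term complex $\prod_{n \geq 0} M \to M \oplus \prod_{n \geq 0} M$ in degrees $-1$ and $0$, with differential $(m_n)_n \mapsto (m_0, (m_n - sm_{n+1})_n)$.

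On the $\mrm{L}\Lambda_s$ side, the key input is the inverse system of Koszul complexes $\opn{K}(s^n) := (A \xrightarrow{\cdot s^n} A)$ in degrees $-1, 0$, which are length-two flat resolutions of $A/(s)^n$ by regularity of $s$; these fit into an inverse system via chain maps $(\cdot s, \opn{id}): \opn{K}(s^{n+1}) \to \opn{K}(s^n)$. Tensoring gives $\opn{K}(s^n) \otimes_A^{\mrm{L}} M \cong (M \xrightarrow{\cdot s^n} M)$, and one identifies $\mrm{L}\Lambda_s(M) \cong \mrm{R}\varprojlim_n (\opn{K}(s^n) \otimes_A^{\mrm{L}} M)$. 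Presenting this derived limit as the homotopy fibre of $1 - T$ on $\prod_n (\opn{K}(s^n) \otimes_A M)$, where $T$ encodes the transition maps, and simplifying yields a complex naturally quasi-isomorphic to the one from the previous paragraph. This produces $\omega_s$, and functoriality of the construction immediately gives naturality and compatibility with the triangulated structure.

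The main obstacle is rigorously establishing the identification $\mrm{L}\Lambda_s(M) \cong \mrm{R}\varprojlim_n (\opn{K}(s^n) \otimes_A^{\mrm{L}} M)$ in the noncommutative setting, since $\Lambda_s$ is neither left nor right exact and the behaviour of its left derived functor on K-flat resolutions must be analysed carefully. In the commutative weakly proregular case this is \cite[Theorem 6.11]{PSY1}, which relies on weak proregularity; here normality of $s$ plays the compensating role, since $(s) = sA = As$ is a two-sided ideal and the Koszul complexes $\opn{K}(s^n)$ carry coherent bimodule-compatible data even though $s$ need not be central. Once this identification is secured, matching the two complexes and checking the relevant functoriality is a direct computation, finishing the construction of $\omega_s$ as a natural isomorphism of triangulated functors.
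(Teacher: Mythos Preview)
Your telescope resolution and the resulting description of $\mrm{Hom}_A(\widetilde{\opn{K}}(A;s),M)$ are essentially the paper's (the paper uses twisted copies $As^{-i}$ rather than plain copies of $A$ in order to record the bimodule structure needed for normality, but as complexes of left $A$-modules the two constructions coincide). The divergence, and the gap, is on the $\mrm{L}\Lambda_s$ side.

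You propose to pass through the identification $\mrm{L}\Lambda_s(M)\cong \mrm{R}\varprojlim_n(\opn{K}(s^n)\otimes_A^{\mrm{L}}M)$ and then match the $(1-T)$ model of the derived limit with the telescope $\mrm{Hom}$. You correctly flag this identification as the main obstacle, but notice that it is not an auxiliary lemma: once you present $\mrm{R}\varprojlim$ by the $(1-T)$ fibre on $\prod_n(\opn{K}(s^n)\otimes_A M)$ and simplify, you recover exactly $\mrm{Hom}_A(\widetilde{\opn{K}}(A;s),M)$. So the content of your ``obstacle'' is literally the statement $\mrm{L}\Lambda_s\cong \mrm{RHom}_A(\opn{K}(A;s),-)$ that you are trying to prove. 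Appealing to \cite[Theorem 6.11]{PSY1} does not help either, since that theorem has the present one as its conclusion in the commutative case rather than as an input.

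The paper avoids this circularity by not invoking derived limits at all. It writes down, at the level of $\cat{K}(A)$, an explicit morphism of functors
\[
\omega:\ \mrm{Hom}_A(\mrm{Tel}(A;s),-)\ \longrightarrow\ \Lambda_s(-)
\]
(landing in the honest completion, not its derived version), given on a module $M$ by a concrete formula sending $(m,m_0,m_1,\ldots)$ to the coherent sequence $(m_0-m,\ sm_1+m_0-m,\ s^2m_2+sm_1+m_0-m,\ldots)$ in $\varprojlim M/s^nM$. One then checks by hand that $\omega_L$ is a quasi-isomorphism for every flat module $L$: injectivity of left multiplication by $s$ on $L$ forces $\opn{H}^{-1}$ to vanish, and a direct elementwise computation shows $\ker(\omega_L)=\opn{im}(\partial)$. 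Because $\mrm{Tel}(A;s)$ is a bounded complex of projectives, a way-out argument (\cite[Proposition 1.9]{PSY1}) upgrades this to all complexes of flat modules, in particular to K-projective complexes of projectives, on which both $\mrm{RHom}_A(\opn{K}(A;s),-)$ and $\mrm{L}\Lambda_s$ are computed. That finishes the proof.

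In short: your first paragraph is fine and matches the paper; your second paragraph replaces the actual work by an identification that is equivalent to the theorem. The missing idea is to compare directly with $\Lambda_s$ (not $\mrm{L}\Lambda_s$) via an explicit chain map and verify it is a quasi-isomorphism on flat modules.
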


\begin{corollary} \label{c:04}
Let $s$ be a normal and regular element in a ring $A$. Then, $\mrm{L}\Lambda_{s}:\cat{D}(A)\to \cat{D}(A)$ is right adjoint to $\mrm{R}\Ga_{s}:\cat{D}(A)\to \cat{D}(A).$ \qed
\end{corollary}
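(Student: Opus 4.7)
By Lemma \ref{l:12}, $\mrm{R}\Ga_s\cong \cat{K}(A;s)\otimes_A^{\mrm{L}}(-)$, so the derived tensor-Hom adjunction immediately gives $\mrm{RHom}_A(\cat{K}(A;s),-)$ as a right adjoint to $\mrm{R}\Ga_s$. The theorem thus reduces to identifying this right adjoint with $\mrm{L}\Lambda_s$; the plan is to construct $\omega_s$ by an explicit telescope computation, after which Corollary \ref{c:04} follows by composing adjunctions.

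The first step is to compute $\mrm{RHom}_A(\cat{K}(A;s),M)$ concretely. Applying $\mrm{RHom}_A(-,M)$ to the distinguished triangle $\cat{K}(A;s)\to A \to A_s\to \cat{K}(A;s)[1]$ yields the triangle
\[\mrm{RHom}_A(A_s,M)\to M \to \mrm{RHom}_A(\cat{K}(A;s),M) \to \mrm{RHom}_A(A_s,M)[1],\]
exhibiting $\mrm{RHom}_A(\cat{K}(A;s),M)$ as the mapping cone of the natural map $\mrm{RHom}_A(A_s,M)\to M$ induced by the localisation. Because $s$ is normal and regular, right multiplication by $s$ is a left $A$-linear endomorphism of $A$, and $A_s$ is the filtered colimit of $A\xrightarrow{\cdot s}A\xrightarrow{\cdot s}\cdots$. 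The standard telescope construction then yields a short exact sequence $0\to \bigoplus_n A\xrightarrow{1-\mathrm{sh}}\bigoplus_n A\to A_s\to 0$ of left $A$-modules, which is a free resolution of $A_s$ of length one; applying $\mrm{Hom}_A(-,M)$ represents $\mrm{RHom}_A(A_s,M)$ by the two-term complex $\prod_n M \to \prod_n M$ in degrees $0$ and $1$, with differential $(m_n)\mapsto (m_n-sm_{n+1})$.

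The second step is to match the resulting mapping cone with $\mrm{L}\Lambda_s(M)$. The finite free Koszul-type resolutions $K(s^n)=(A\xrightarrow{\cdot s^n}A)$ of $A/s^nA$ (valid because $s^n$ is regular) represent $A/s^nA\otimes_A^{\mrm{L}} M$ as $(M\xrightarrow{s^n\cdot} M)$ in degrees $-1,0$, with transition maps induced by the chain maps $K(s^n)\to K(s^{n+1})$. Assembling these into their derived inverse limit via the fibre of $1-\mathrm{shift}$ on $\prod_n (M\xrightarrow{s^n\cdot}M)$ and comparing with the telescope model of the cone from the first step produces a natural quasi-isomorphism. The Mittag--Leffler condition for the tower $\{M/s^nM\}$ (satisfied because all transitions are surjections) ensures that on a K-flat resolution of $M$ the derived inverse limit agrees with the ordinary termwise completion, so it does compute $\mrm{L}\Lambda_s(M)$.

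The main obstacle is arranging this comparison to be natural in $M$ and compatible with the triangulated structure, rather than being a mere pointwise quasi-isomorphism. Both sides are built from infinite products indexed by $n$ carrying telescope-type differentials, and matching them up requires a careful indexing argument together with the regularity of $s$ to pin down the comparison map uniquely. Once $\omega_s$ is obtained functorially, Corollary \ref{c:04} follows at once by composing it with the adjunction provided by Lemma \ref{l:12}.
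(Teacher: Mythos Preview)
Your reduction of the corollary to Theorem~\ref{t:01} via Lemma~\ref{l:12} and tensor--Hom adjunction is exactly how the paper proceeds; the corollary is stated without proof, immediately after the theorem. Your sketch of Theorem~\ref{t:01} via a telescope resolution of $A_s$ is also the paper's route. However, there is a noncommutative subtlety you gloss over: the telescope must be a resolution by \emph{bimodules}, not just left modules, for $\mrm{RHom}_A(\opn{K}(A;s),M)$ to carry a left $A$-module structure, and your differential $(m_n)\mapsto(m_n - sm_{n+1})$ is not $A$-linear for the standard action on $\prod_n M$ unless $s$ is central. The paper handles this by writing the telescope as $\bigoplus_i As^{-i}$, so that the right action on the $i$th summand is twisted by the automorphism $\phi^i$ determined by $as=s\phi(a)$; the resulting $\mrm{Hom}$ complex is $\prod_i {}^{\phi^i}M$ with the twisted action, for which the same formula \emph{is} $A$-linear. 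This is not a flaw in the strategy, but you would hit it the moment you tried to carry out the computation as written.

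Aside from that, the paper's endgame is more explicit than yours: rather than invoking derived inverse limits and Mittag--Leffler, it writes down a concrete map $\omega_M\colon \mrm{Hom}_A(\mrm{Tel}(A;s),M)\to \Lambda_s(M)$ by an explicit formula, checks directly that it is a quasi-isomorphism when $M$ is flat, and then extends to all K-projective complexes. Both routes work, but the explicit one sidesteps any foundational worries about derived limits in $\cat{D}(A)$ and makes the naturality you flag as the ``main obstacle'' automatic.
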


The proof of Theorem \ref{t:01} is an adaptation of the proof of \cite[Corollary 4.25]{PSY1}. However, since there are some noncommutative subtleties involved, we give the argument in full.

Let $s$ be a normal and regular element in a ring $A$.  Given $a\in A$, there exists a unique element $\phi(a)\in A$ such that $as=s\phi(a)$. This correspondence assembles together to define a ring automorphism $\phi: A\to A$. 

\begin{definition} \label{d:11}
Let $A$ be a ring. If $M\in \cat{M}(A)$, let ${}^{\phi^{i}}M$ denote the $A$-module with the action of $A$ twisted by $\phi^{i}$. Explicitly, ${}^{\phi^{i}}M$ has the same underlying $\K$-module as $M$, but with the following action of $A$: for $a\in A$ and $m\in {}^{\phi^{i}}M$, $a \cdot m := \phi^{i}(a)m$.
\end{definition}

For each $i\in \mathbb{N}$, $As^{-i}$, is an $A^{\mrm{en}}$-submodule of $A_{s}$. If we view $As^{-i}$ as a rank $1$ free left $A$-module with basis $s^{-i}$, $A$ acts on the right by \[(as^{-i})b = a\phi^{i}(b)s^{-1}.\]

Consider $\bigoplus_{i\in \mathbb{N}}As^{-i} \in \cat{M}(A^{\mrm{en}}).$ This is a free left $A$-module of countably infinite rank. Let $e_{i}:= (s^{-i})_{i}$. The set $\{e_{i}\}_{i\in \mathbb{N}}$ is a left $A$-module basis for $\bigoplus_{i\in \mathbb{N}}As^{-i}.$

Define the following complex in $\cat{C}(A^{\mrm{en}})$, concentrated in degrees $0$ and $1$:
\[ \mrm{Tel'}(A;s):= \bigl( \cdots \to 0 \to \bigoplus_{i\in \mathbb{N}}As^{-i} \xrightarrow{\partial} \bigoplus_{i\in \mathbb{N}}As^{-i} \to 0 \to \cdots \bigr), \] with differential $\partial$ given by defining \[\partial(e_{i})= e_{i} - se_{i+1},\] and then extending linearly as a left $A$-module map.  Since \[\partial(a\phi^{i}(b)e_{i}) = a\phi^{i}(b)e_{i} - a\phi^{i}(b)se_{i+1} =a\phi^{i}(b)e_{i} - as\phi^{i+1}(b)e_{i+1} = ae_{i}b -ase_{i+1}b,\]

the map $\partial$ is a morphism of $A^{\mrm{en}}$-modules. There is a map \[\beta_{s}: \mrm{Tel'}(A;s)\to A_{s}[-1],\] defined by sending $e_{i}$ to $s^{-i}$ and extending linearly. $\beta_{s}$ is a morphism in $\cat{C}(A^{\mrm{en}})$.

\begin{lemma} \label{l:05}
$\beta_{s}$ is a quasi-isomorphism.
\end{lemma}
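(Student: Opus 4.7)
The plan is to verify that $\beta_{s}$ is a chain map, and then show directly that it induces an isomorphism on cohomology. Since $A_{s}[-1]$ is concentrated in degree $1$, the cohomology of $\mrm{Tel'}(A;s)$ only needs to be computed in degrees $0$ and $1$: we must show that $\partial$ is injective and that the induced map $\opn{coker}(\partial) \to A_{s}$ is an isomorphism.

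First I would check that $\beta_{s}$ is well-defined as a morphism in $\cat{C}(A^{\mrm{en}})$; by construction it is only nonzero in degree $1$, so the chain map condition amounts to $\beta_{s} \circ \partial = 0$. This follows from the computation $\beta_{s}(\partial(e_{i})) = \beta_{s}(e_{i}) - \beta_{s}(s e_{i+1}) = s^{-i} - s \cdot s^{-(i+1)} = 0$. Right $A$-linearity of $\beta_{s}$ reduces, since $s$ is normal, to the identity $s^{-i} b = \phi^{i}(b) s^{-i}$ in $A_{s}$ for all $b \in A$; this is proved by induction from the defining relation $a s = s \phi(a)$.

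For injectivity of $\partial$: given a finite sum $\sum_{i=0}^{N} a_{i} e_{i}$ with $\partial(\sum a_{i} e_{i}) = 0$, I would compare coefficients. The coefficient of $e_{0}$ gives $a_{0} = 0$, and for $j \geq 1$ the coefficient of $e_{j}$ gives $a_{j} = a_{j-1} s$, so by induction all $a_{i} = 0$. For the cokernel: in $\opn{coker}(\partial)$, the relation $\partial(e_{i+k-1}) = 0$ gives $[e_{i}] = s^{k} [e_{i+k}]$ for all $k \geq 0$, i.e.~each class $[e_{i}]$ is divisible by arbitrary powers of $s$. Every element of $A_{s}$ can be written in the form $a s^{-n}$ because $s$ is normal and regular (so that $S = \{s^{n}\}$ is a two-sided denominator set), which shows $\bar{\beta}_{s}\colon \opn{coker}(\partial)\to A_{s}$ is surjective. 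For injectivity, given $\sum_{i=0}^{N} a_{i} s^{-i} = 0$ in $A_{s}$, multiply on the right by $s^{N}$: this yields $\sum_{i=0}^{N} a_{i} s^{N-i} = 0$, first in $A_{s}$, but since each summand lies in $A$ and the localisation map $A \to A_{s}$ is injective (as $s$ is regular), also in $A$. Then in the cokernel,
\[
\Bigl[\sum_{i=0}^{N} a_{i} e_{i}\Bigr] = \Bigl[\sum_{i=0}^{N} a_{i} s^{N-i} e_{N}\Bigr] = \Bigl[\Bigl(\sum_{i=0}^{N} a_{i} s^{N-i}\Bigr) e_{N}\Bigr] = 0,
\]
as required.

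The only subtle point is the right $A$-linearity of $\beta_{s}$ and the bookkeeping of the twist by $\phi$; once the identity $s^{-i} b = \phi^{i}(b) s^{-i}$ is in hand, everything else is a direct computation on the explicit basis $\{e_{i}\}_{i \in \mathbb{N}}$, and the regularity of $s$ guarantees both that $\partial$ has no kernel and that $A$ embeds into $A_{s}$, which is what drives the injectivity arguments.
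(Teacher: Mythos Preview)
Your proof is correct and follows essentially the same approach as the paper: both show $\partial$ is injective by reading off coefficients, note surjectivity of $\beta_{s}$, and then verify $\ker(\beta_{s})=\opn{im}(\partial)$ by reducing to the identity $\sum a_{i}s^{N-i}=0$ in $A$. The only cosmetic difference is that the paper exhibits an explicit preimage $b=\sum_{j=0}^{n-1}\bigl(\sum_{k=0}^{j}a_{j-k}s^{k}\bigr)e_{j}$ with $\partial(b)=a$, whereas you argue in the cokernel via the relations $[e_{i}]=s^{N-i}[e_{N}]$; these are two ways of packaging the same computation.
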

\begin{proof}
If  $a= a_{0}e_{0} +  a_{1}e_{1} +  \ldots +  a_{n}e_{n} \in \opn{Tel}'(A,s)^{0},$ \[\partial(a) = a_{0}e_{0} + (a_{1} - a_{0}s)e_{1} + (a_{2} - a_{1}s)e_{2}
+ \ldots - a_{n}se_{n+1}.\] From this formula, we see that $\partial$ is injective, and thus that $H^{0}(\opn{Tel'}(A;s))=0$.

It is clear that $\beta_{s}$ is surjective. Let $a=a_{0}e_{0} +  a_{1}e_{1} + \ldots + a_{n}e_{n} \in \opn{Tel}'(A,s)^{1},$ and suppose $\beta_{s}(a)=0$. Thus \[a_{0} + a_{1}s^{-1} + \ldots + a_{n}s^{-n} = 0.\]

Define $b:= a_{0}e_{0} + (a_{1}+a_{0}s)e_{1} +  (a_{2} + a_{1}s + a_{0}s^{2})e_{2} +  \ldots +  (a_{n-1} + a_{n-2}s + \ldots + a_{0}s^{n-1})e_{n-1} \in \opn{Tel}'(A,s)^{0}.$ Then, $\partial(b)=a$.
\end{proof}

There is a morphism  \[\theta_{s}: A[-1]\to \mrm{Tel'}(A;s),\] in $\cat{C}(A^{\mrm{en}}),$ defined by sending $1$ to $e_{0}$. $\beta_{s}\circ \theta_{s}: A[-1]\to A_{s}[-1]$ is the translation of the canonical localisation map from $A$ to $A_{s}$.

Let $\mrm{Tel}(A;s) \in \cat{C}(A^{\mrm{en}})$ denote the mapping cone of $\theta_{s}$. $\mrm{Tel}(A;s)$ has an explicit description which is similar to that of $\mrm{Tel'}(A,s).$ Consider the $A^{\mrm{en}}$-module $A\oplus (\bigoplus_{i\in \mathbb{N}}As^{-i})$, with left $A$-module basis $\{e_{-1}\}\cup \{e_{i}\}_{i\in \mathbb{N}},$ where $e_{-1}$ corresponds to $1\in A$. 

Then, \[ \mrm{Tel}(A;s):= \bigl( \cdots \to 0 \to A\oplus (\bigoplus_{i\in \mathbb{N}}As^{-i}) \xrightarrow{\partial} \bigoplus_{i\in \mathbb{N}}As^{-i} \to 0 \to \cdots \bigr),\] with differential $\partial$ given by \[\partial(e_{-1})= e_{0},\] and \[\partial(e_{i})= e_{i} - se_{i+1},\] for $i\geq 0$.

Observe that  $\opn{K}(A;s)$ is the mapping cone of the map $A[-1]\to A_{s}[-1].$

\begin{lemma} \label{l:06}
There is an isomorphism \[\gamma_{s}: \opn{Tel}(A;s)\to \opn{K}(A;s)\] in $\cat{D}(A^{\mrm{en}})$.
\end{lemma}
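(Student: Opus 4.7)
The plan is to exhibit $\gamma_s$ as the morphism of mapping cones induced by $\beta_s$, and then invoke Lemma \ref{l:05} to conclude that it is a quasi-isomorphism.

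First I would package both complexes as mapping cones in a uniform way. By construction, $\opn{Tel}(A;s)$ is the mapping cone of $\theta_s: A[-1] \to \opn{Tel'}(A;s)$, and as noted in the paragraph immediately preceding the lemma, $\opn{K}(A;s)$ is the mapping cone of the shifted canonical localisation map $\ell: A[-1] \to A_s[-1]$. The identity on $A[-1]$ together with $\beta_s$ forms a square in $\cat{C}(A^{\mrm{en}})$:
\begin{equation*}
\begin{CD}
A[-1] @>\theta_s>> \opn{Tel'}(A;s) \\
@V\opn{id}VV @VV\beta_s V \\
A[-1] @>\ell>> A_s[-1]
\end{CD}
\end{equation*}
Commutativity here is strict, i.e.\ on the level of complexes, since by construction $\beta_s$ sends $e_0$ to $s^{0}=1 \in A_s$, while $\theta_s$ sends $1$ to $e_0$; composing gives precisely $\ell$ in degree $1$, as asserted in the paragraph just before the lemma.

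The functoriality of the mapping cone construction in $\cat{C}(A^{\mrm{en}})$ then produces a canonical morphism $\gamma_s: \opn{Tel}(A;s) \to \opn{K}(A;s)$ in $\cat{C}(A^{\mrm{en}})$ extending the data of the square above. Since $\opn{id}$ is trivially a quasi-isomorphism and $\beta_s$ is one by Lemma \ref{l:05}, the five lemma applied to the long exact cohomology sequences of the two mapping cones shows that $\gamma_s$ is itself a quasi-isomorphism, and hence an isomorphism in $\cat{D}(A^{\mrm{en}})$.

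There is no real obstacle here: the main content has already been absorbed into Lemma \ref{l:05}, and all that remains is the bookkeeping check that the square above commutes strictly in $\cat{C}(A^{\mrm{en}})$, which is immediate from the definitions of $\theta_s$ and $\beta_s$ on the chosen basis $\{e_i\}$.
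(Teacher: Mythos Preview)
Your proof is correct and follows essentially the same approach as the paper: both compare the mapping-cone triangles of $\theta_s$ and of the shifted localisation map via the commutative square with vertical maps $\opn{id}_{A[-1]}$ and $\beta_s$, then use Lemma~\ref{l:05}. The only difference is that you construct $\gamma_s$ explicitly in $\cat{C}(A^{\mrm{en}})$ using the strict functoriality of the cone, whereas the paper invokes the fill-in axiom for triangulated categories in $\cat{D}(A^{\mrm{en}})$; your version is slightly more concrete but the content is identical.
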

\begin{proof}
There is the following diagram in $\cat{D}(A^{\mrm{en}})$:

\begin{displaymath}
\xymatrix{
A[-1] \ar[r]^{\theta_{s}} \ar[d]^{\opn{id}_{A[-1]}} & \opn{Tel'}(A,s) \ar[r] \ar[d]^{\beta_{s}} & \opn{Tel}(A,s) \ar[r] \ar@{-->}[d]^{\gamma_{s}}  & A \ar[d]^{\opn{id}_{A}} \\
A[-1] \ar[r] & A_{s}[-1] \ar[r] & \opn{K}(A;s)  \ar[r] & A, 
}
\end{displaymath}

where both rows are distinguished triangles, the leftmost square commutes, and all solid vertical maps are isomorphisms. The axioms for a triangulated category now imply the existence of an isomorphism $\gamma_{s}: \opn{Tel}(A;s)\to \opn{K}(A;s).$
\end{proof}

\begin{proof}[Proof of Theorem \ref{t:01}]

$\opn{Tel}(A;s)$ is a bounded complex of $A^{\mrm{en}}$-modules; as a complex of left $A$-modules, it is termwise projective. By Lemma \ref{l:06}, it can therefore be used to compute $\mrm{RHom}_{A}(\opn{K}(A;s),-)$. 

Let $M\in \cat{M}(A)$. After adjusting indices, the complex $\mrm{Hom}_{A}(\opn{Tel}(A;s),M)$ is given by the following complex of $A$-modules, concentrated in degrees $-1$ and $0$:

\[\bigl( \cdots \to 0 \to \prod_{i\in \mathbb{N}} {}^{\phi^{i}}M \xrightarrow{\partial} M\times \prod_{i\in \mathbb{N}} {}^{\phi^{i}} M \to 0 \to \cdots \bigr),\] with differential \[\partial((m_{0},m_{1},m_{2},\ldots)) = (m_{0}, m_{0} - sm_{1}, m_{1}-sm_{2}, \dots). \]

Define a morphism \[\omega_{M}: M\times \prod_{i\in \mathbb{N}} {}^{\phi^{i}} M  \to \varprojlim_{n\in \mathbb{Z}^{+}} M/s^{n}M \] by \[\omega_{M}((m,m_{0},m_{1},m_{2},\ldots))= (m_{0}-m, sm_{1} + m_{0} - m, s^{2}m_{2} + sm_{1} + m_{0} - m, \ldots). \]

There is a commutative diagram in $\cat{C}(A)$:

\begin{displaymath}
\xymatrix{
\ldots \ar[r] & \prod_{i\in \mathbb{N}} {}^{\phi^{i}}M \ar[r]^(.43){\partial} \ar[d] & M\times \prod_{i\in \mathbb{N}} {}^{\phi^{i}} M \ar[r] \ar[d]^{\omega_{M}} & \ldots\\
\ldots \ar[r] & 0\ar[r] & \varprojlim_{n\in \mathbb{Z}^{+}} M/s^{n}M \ar[r] & \ldots.
}
\end{displaymath}

The morphism $\omega_{M}$ extends naturally to a morphism \[\omega: \mrm{Hom}_{A}(\opn{Tel}(A;s),-)\to \Lambda_{s}(-)\] of triangulated functors from $\cat{K}(A)$ to itself.

Suppose $L$ is a flat $A$-module. We claim that $\omega_{L}$ is a quasi-isomorphism. 

For flat $L$, the $\K$-linear map $s \cdot: L\to L$ is injective. This implies that  \[\opn{H}^{-1}(\mrm{Hom}_{A}(\opn{Tel}(A;s),L))=0.\] Thus, all we need to show is that  $\opn{im}(\partial)=\ker(\omega_{L}).$  To see this, suppose \[\omega_{L}((l,l_{0},l_{1},\ldots))=0.\] We wish to find \[(x_{0}, x_{1},x_{2},\ldots)\in \prod_{i\in \mathbb{N}} {}^{\phi^{i}}M\] such that  \[(x_{0}, x_{0} -sx_{1}, x_{1} -sx_{2},\ldots) = (l,l_{0},l_{1},\ldots).\] 

To do this, first define $x_{0} = l$. Suppose $k\geq 1$. Since \[s^{k-1}l_{k-1} + s^{k-2}l_{k-2} + \ldots + sl_{1} + l_{0} -l\in s^{k}L,\] there is a unique $x_{k}\in L$ such that \[s^{k-1}l_{k-1} + s^{k-2}l_{k-2} + \ldots + sl_{1} + l_{0} -l = -s^{k}x_{k}.\]  

It follows that \[s^{k-1}l_{k-1} - s^{k-1}x_{k-1} = -s^{k}x_{k}.\] Since multiplication by $s$ is an injective map from $M$ to itself, \[l_{k-1}-x_{k-1}= -sx_{k},\] and thus \[l_{k-1} = x_{k-1} - sx_{k}.\] 

We have shown that $\omega_{L}$ is a quasi-isomorphism for any flat module $L$. However, since $\opn{Tel}(A;s)$ is a bounded complex of projective modules, this implies that $\omega_{L}$ is a quasi isomorphism for any  complex of flat $A$-modules (\cite[Proposition 1.9]{PSY1}), and thus in particular for any K-projective complex of projective modules. Since every complex in $\cat{D}(A)$ is functorially isomorphic to such a complex, $\omega$ induces an isomorphism \[\omega_{s}: \mrm{RHom}_{A}(\opn{K}(A;s),-)\to \mrm{L}\Lambda_{s}\] of triangulated functors from $\cat{D}(A)$ to itself.
\end{proof}

\end{document}